\documentclass[11pt,reqno]{amsart}

\usepackage[T1]{fontenc}
\usepackage{newtxtext}
\usepackage{newtxmath}
\usepackage{microtype}

\usepackage[
  left=1in,
  right=1in,
  top=1in,
  bottom=1in
]{geometry}
\usepackage{amsmath,amssymb,amsthm}
\usepackage{thmtools}
\usepackage{enumitem}
\usepackage{amscd}
\usepackage{tikz-cd}
\usepackage{booktabs}

\usepackage{hyperref}

\usepackage{cleveref}

\numberwithin{equation}{section}

\theoremstyle{plain}
\newtheorem{theorem}{Theorem}[section]

\newtheorem{proposition}[theorem]{Proposition}
\newtheorem{lemma}[theorem]{Lemma}
\newtheorem{corollary}[theorem]{Corollary}
\theoremstyle{definition}
\newtheorem{definition}[theorem]{Definition}
\newtheorem{example}[theorem]{Example}
\theoremstyle{remark}
\newtheorem{remark}[theorem]{Remark}

\crefname{section}{Sect.}{Sects.}
\Crefname{section}{Sect.}{Sects.}

\crefname{definition}{Defn.}{Defns.}
\Crefname{definition}{Defn.}{Defns.}

\crefname{theorem}{Thm.}{Thms.}
\Crefname{theorem}{Thm.}{Thms.}

\crefname{proposition}{Prop.}{Props.}
\Crefname{proposition}{Prop.}{Props.}

\crefname{lemma}{Lemma}{Lemmas}
\Crefname{lemma}{Lemma}{Lemmas}

\crefname{corollary}{Cor.}{Cors.}
\Crefname{corollary}{Cor.}{Cors.}

\crefname{remark}{Rem.}{Rems.}
\Crefname{remark}{Rem.}{Rems.}

\crefname{example}{Ex.}{Exs.}
\Crefname{example}{Ex.}{Exs.}

\crefname{equation}{Eqn.}{Eqns.}
\Crefname{equation}{Eqn.}{Eqns.}

\crefname{table}{Table}{Tables}
\Crefname{table}{Table}{Tables}

\title[Nonlinear Root Geometry]{Coincidence Correspondences and Nonlinear Root Geometry}

\author[A. Shukla]{Alok Shukla}

\address{School of Arts and Sciences, Ahmedabad University, India}
\email{alok.shukla@ahduni.edu.in}

\date{}

\begin{document}

\begin{center} 
{\bf \Large Coincidence Correspondences and Nonlinear Root Geometry}
\end{center}

\vspace{20pt}

\begin{center} 
\large{\bf Alok Shukla}
\end{center}

\vspace{10pt}
\begin{center}
{\small 
\noindent
 {\it School of Arts and Sciences, Ahmedabad University, India. 
\vspace{5pt}\\
\hspace{7pt}{\it E-mail:} alok.shukla@ahduni.edu.in
}}
\end{center}

\vspace{10pt}

\noindent{\small {\bf Abstract.}
We show that finite morphisms of smooth algebraic varieties naturally give rise to Cartan--Coxeter type structures. Starting from the self-fiber product $X\times_YX$ of a finite morphism $Q:X\to Y$, we construct local symmetry operators and intrinsic Cartan-type invariants from the geometry of its non-diagonal irreducible components. This provides a mechanism for reconstructing root-theoretic structures directly from algebraic correspondences rather than from reflection groups.

A central part of the theory is a rank-two geometry associated with pairs of non-diagonal components. We establish a rank-two reduction theorem, derive explicit trace and determinant formulas for the corresponding operators, and obtain a classification into elliptic, parabolic, and hyperbolic transport types. These results yield intrinsic analogues of Cartan matrices, Coxeter transformations, exponents, and Dynkin diagrams associated with finite morphisms.

We further prove rigidity theorems showing that the structures arising from a single finite morphism are highly constrained. To obtain richer geometries, we introduce transport atlases of compatible local finite covers equipped with connection data, leading to nonlinear Cartan fields with variable local geometry. This places classical Weyl and complex reflection geometries within a broader correspondence-based root theory extending beyond finite reflection groups.
}

\vspace{10pt}

\setcounter{tocdepth}{1}
\tableofcontents

 \thispagestyle{empty} 

\footnote[0]{ 
2020 \textit{Mathematics Subject Classification}.
Primary 14H30, 20F55;
Secondary 14E05, 53C05, 14A20.
}
\footnote[0]{ 
\textit{Key words and phrases}.
Finite morphism, fiber product, coincidence correspondence,
reflection group, Coxeter geometry, Cartan matrix,
Galois cover, transport symmetry, root arrangement,
nonlinear root geometry.
}

\newpage

\section{Introduction}
\label{sec:Introduction}

Root systems and reflection groups are among the fundamental organizing structures in modern mathematics. They encode geometric and algebraic symmetries arising throughout Lie theory, algebraic geometry, and representation theory, and they also play important roles in mathematical physics.

The classical theory of real reflection groups and root systems forms one of the foundations of Lie theory  \cite{Coxeter1934, varadarajan2013lie,humphreys2012introduction}. In the complex setting, the fundamental work of Shephard and Todd, together with Chevalley, established that finite groups generated by complex reflections are precisely those whose invariant rings are polynomial algebras \cite{ShephardTodd1954, Chevalley1955, humphreys1992reflection}. More recently, several attempts have been made to generalize classical reflection structures. The theory of Weyl groupoids, initiated by Heckenberger \cite{Heckenberger2006} and further developed by Cuntz and Heckenberger \cite{cuntz2012finite}, replaces a single Cartan matrix by a family of Cartan data varying over a groupoid. Generalized root systems also arise naturally in algebraic geometry through the work of Looijenga \cite{Looijenga1981} and others, where root-theoretic structures emerge from intersection geometries and singularity theory.

To the best our knowledge,  most existing generalizations of classical reflection structures remain fundamentally tied to linear actions, hyperplane arrangements, or combinatorial Coxeter-type structures. The possibility that root geometry might arise directly from nonlinear finite morphisms has received comparatively little attention. On the other hand, finite morphisms and their fiber products are ubiquitous throughout algebraic geometry, covering theory, and the theory of correspondences. This suggests the possibility that root-theoretic structures may be encoded not by a linear group action but by the geometry of a finite correspondence itself. The perspective developed in this work is motivated in part by this observation: instead of starting from a reflection group and its action, we begin with a finite morphism itself and investigate the geometric structures naturally encoded by its fibers and their mutual interactions.

While generalized root systems have also appeared in algebraic geometry, notably in the work of Looijenga \cite{Looijenga1981}, the underlying objects are fundamentally different from those considered here. Looijenga's roots are classes in a Picard lattice associated with rational surfaces, whereas the coincidence components introduced in the present work are irreducible correspondence components of a fiber product. In Looijenga's theory the Cartan data arise from lattice-theoretic intersection forms, while in the present framework the transport Cartan matrices and transport reflection operators are derived from the geometry of finite morphisms and their associated coincidence correspondences. Although both settings exhibit wall structures and reflection-type phenomena, the root data developed here are reconstructed from correspondence geometry rather than from a pre-existing Picard lattice.

A further point of comparison is the McKay correspondence, which gives a bijection between the nontrivial irreducible representations of a finite subgroup of $SL_2(\mathbb{C})$ and the exceptional divisors appearing in the minimal resolution of the corresponding Kleinian singularity \cite{mckay1980graphs,mckay1981cartan}. The resulting McKay graph is an ADE Dynkin diagram and therefore determines the associated ADE root system.
By contrast, the transport Cartan matrices developed in the present work are reconstructed directly from the geometry of a finite morphism and the interactions of its associated root walls. In this sense, the present framework bypasses singularity resolution and is not restricted to quotient spaces arising from finite group actions, thereby extending root-theoretic constructions to a broader class of finite correspondences.

Instead of beginning with a reflection group acting on a vector space, we start with a finite morphism
$Q:X\to Y$
between smooth algebraic varieties. Associated to $Q$ is the coincidence correspondence
\[
\mathcal{C}_Q
=
X\times_Y X =
\{(x_1,x_2)\in X\times X : Q(x_1)=Q(x_2)\}.
\]
From the viewpoint of covering theory, the coincidence correspondence records how points lying in a common fiber of $Q$ are geometrically related. In this sense it complements classical monodromy and deck-transformation perspectives, which describe how sheets of a covering are permuted. The coincidence correspondence contains additional incidence information concerning pairwise interactions among sheets, and it is this geometry that gives rise to the coincidence-root structures developed in the present work.

The coincidence correspondence records pairs of points lying in the same fiber of $Q$. Our central observation is that its non-diagonal irreducible components characterize  nonlinear root-datum. These \emph{coincidence components} determine root walls, local transport symmetries, transport operators, and a collection of intrinsic invariants that play the role of Cartan and Coxeter data.
The guiding principle of the paper is the chain:
\[
Q
\Longrightarrow
\mathcal C_Q
\Longrightarrow
\text{coincidence components}
\Longrightarrow
\text{transport geometry}
\Longrightarrow
\text{root-theoretic structures}.
\]

A first part of the paper develops the basic geometry associated to coincidence components. We construct local transport symmetries, transport operators, transport trace forms, normalized transport invariants, and transport root data. These objects are functorially reconstructed from the coincidence correspondence and provide nonlinear analogues of the fundamental structures of classical reflection theory.

The second part of the paper develops a rank-two transport geometry associated to pairs of coincidence components. We prove a rank-two reduction theorem showing that the local interaction of two coincidence components is controlled by a reduced transport operator whose characteristic polynomial depends only on a normalized transport invariant $\Theta_{ij}$. This leads to a finite-order classification theorem, a dynamical trichotomy into elliptic, parabolic, and hyperbolic transport types, and a reconstruction theorem showing that the local rank-two transport dynamics are completely encoded by the invariant $\Theta_{ij}$.

These results naturally give rise to transport Cartan matrices,
transport exponents, transport Coxeter graphs, and transport Dynkin
graphs. For a single finite morphism, the resulting transport geometry is
surprisingly rigid. We prove finite-monodromy and projective rigidity
theorems showing that many of the variable Cartan phenomena familiar
from generalized root theories cannot occur in this setting. To obtain genuinely variable Cartan
geometry, we introduce transport atlases consisting of compatible
families of local finite covers and subsequently develop
connection-induced extensions of the transport-root formalism. These
constructions produce nonlinear Cartan fields whose local geometry may
vary across the underlying space while remaining rooted in the
coincidence correspondence. Nevertheless, the local rank-two theory
retains many of the structural features of classical root systems and
reflection groups.

A remarkable aspect of the construction is that it simultaneously recovers and extends the classical theory. If $\pi: V \to V/W$ is the quotient map associated to a finite Weyl group, then the coincidence correspondence reconstructs the classical root arrangement, Cartan matrix, Coxeter exponents, and rank-two reflection geometry. More generally, if $\pi: V \to V/G$ is the quotient associated to a finite complex reflection group, then the coincidence correspondence reconstructs the Shephard--Todd reflection geometry and determines the group $G$ itself.

Table~\ref{tab:terminology-guide} summarizes the principal objects introduced in this paper, while Table~\ref{tab:classical-dictionary} compares them with their nearest analogues in classical Coxeter--Weyl theory. The correspondences in Table~\ref{tab:classical-dictionary} are intended as analogies of role and structure and should not be treated as literal identifications.

\begin{table}[ht]
\centering
\caption{Guide to the principal objects introduced in this paper.}
\label{tab:terminology-guide}
\renewcommand{\arraystretch}{1.15}
\begin{tabular}{p{4.5cm} p{6.5cm} p{2.5cm}}
\toprule
\textbf{Object} &
\textbf{Informal Description} &
\textbf{Reference} \\
\midrule

Coincidence correspondence $\mathcal C_Q=X\times_YX$
&
Pairs of points lying in the same fiber of $Q$
&
\Cref{subsec:Notation}
\\

Coincidence component
&
A non-diagonal irreducible component of $\mathcal C_Q$
&
\Cref{subsec:Notation}
\\

Admissible coincidence component
&
A generically smooth non-diagonal irreducible component of $\mathcal C_Q$
&
\Cref{def:admissible-coincidence-component}
\\

Root wall
&
Fixed-point locus associated with a coincidence component
&
\Cref{def:root-wall}
\\

Primitive transport symmetry
&
Local symmetry induced by a coincidence component
&
\Cref{def:primitive-transport-symmetry}
\\

Regular transport root point
&
Point where the local transport geometry is well-defined
&
\Cref{def:Regular-Transport-Root-Point}
\\

Transport root
&
Root datum attached to a transport symmetry
&
\Cref{def:Transport-Root}
\\

Transport Cartan coefficients
&
Cartan coefficients attached to transport roots
&
\Cref{def:Transport-Cartan-Coefficients}
\\

Transport Cartan matrix
&
Pairing matrix determined by transport roots
&
\Cref{def:Rank--Two-Transport-Cartan-Matrix}
\\

Transport Dynkin graph
&
Graph encoding transport Cartan data
&
\Cref{def:Transport-Dynkin-Graph}
\\

Transport Coxeter operator
&
Composition of transport symmetries
&
\Cref{def:Transport-Coxeter-Graph}
\\

\bottomrule
\end{tabular}
\end{table}

\begin{table}[ht]
\centering
\caption{Comparison with classical Coxeter--Weyl theory. The objects in the right column should be viewed as analogues rather than direct generalizations of the classical notions.}
\label{tab:classical-dictionary}
\renewcommand{\arraystretch}{1.15}
\begin{tabular}{p{5cm} p{5cm}}
\toprule
\textbf{Classical Coxeter Theory}
&
\textbf{Coincidence-Root Theory}
\\
\midrule

Reflection hyperplane
&
Root wall
\\

Reflection
&
Primitive transport symmetry
\\

Root
&
Transport root
\\

Cartan matrix
&
Transport Cartan matrix
\\

Coxeter transformation
&
Transport Coxeter operator
\\

Coxeter graph
&
Transport Coxeter graph
\\

Dynkin diagram
&
Transport Dynkin graph
\\

Reflection arrangement
&
Coincidence geometry
\\

Weyl / reflection group
&
Transport symmetries
\\

\bottomrule
\end{tabular}
\end{table}

The principal contributions of this paper may be summarized as follows:

\begin{enumerate}[label=(\roman*)]
\item \textbf{Coincidence-root systems.}
We introduce coincidence-root systems associated with finite morphisms and show that the irreducible decomposition of the coincidence correspondence naturally determines root walls, local transport symmetries, transport operators, and root cotangent arrangements (\Cref{sec:Coincidence-correspondences,sec:Transport-Symmetries,sec:Tranport-Root-Theory}).
\item \textbf{Transport operators and Cartan geometry.}
We develop a transport-root formalism consisting of transport root data, transport trace forms, normalized transport invariants, and transport Cartan matrices, providing nonlinear analogues of the fundamental structures of classical Cartan theory (\Cref{sec:Tranport-Root-Theory,sec:Rank-Two-Transport-Geometry}).
\item \textbf{Rank-two reduction and spectral classification.}
We establish a rank-two reduction theorem, derive explicit trace and determinant formulas for reduced transport operators, obtain finite-order spectral classifications, and prove a dynamical trichotomy into elliptic, parabolic, and hyperbolic transport types (\Cref{sec:Rank-Two-Transport-Geometry}).
\item \textbf{Rigidity theorems.}
We establish finite-monodromy and projective rigidity results showing that the intrinsic transport geometry associated with a single finite morphism is highly constrained. In particular, when the relevant transport symmetries extend to global regular automorphisms preserving the finite morphism, the associated rank-two transport invariants are forced to lie in a discrete spectral set. These rigidity results reveal strong obstructions to the existence of continuously varying intrinsic Cartan geometry and clarify the limitations of the single-morphism framework (\Cref{sec:finite-monodromy-rigidity,sec:Geometric-Rigidity-and-Degenerations}).
\item \textbf{Recovery of classical reflection geometry.}
We show that quotient maps associated with finite Weyl groups and finite complex reflection groups are completely recovered from the coincidence correspondence, including the corresponding Cartan--Coxeter structures and reflection groups (\Cref{sec:Finite-Galois-Covers}).
\item \textbf{Transport atlases and nonlinear Cartan fields.}
To move beyond the rigidity of individual finite morphisms, we introduce transport atlases consisting of compatible local finite covers and develop connection-induced extensions of the transport-root formalism, yielding variable intrinsic Cartan invariants and nonlinear Cartan fields (\Cref{sec:transport-atlases-and-Local-Finite-Covers,sec:Connection-Induced-Cartan-Extensions}).
\item \textbf{Strict extension beyond reflection groups.}
We establish intrinsic obstructions to linearization and construct coincidence geometries that cannot arise from finite reflection-group quotients. Hence, finite Weyl and complex reflection geometries appear as rigid special cases of a substantially broader coincidence-root framework (\Cref{sec:Beyond-Reflection-Groups}). 
\end{enumerate}

The philosophy underlying these results is that many structures
traditionally regarded as consequences of linear reflection groups are
in fact manifestations of a more general geometry associated to finite
correspondences. The coincidence correspondence furnishes a geometric
source of root data, Cartan invariants, and reflection-type
symmetries independent of any ambient linear action. Finite reflection
groups emerge as particularly rigid instances of this broader theory,
while transport atlases and connection-induced extensions reveal new
nonlinear phenomena absent from the classical setting.

Furthermore, the present theory is not simply a reformulation of classical ramification theory. Classical ramification theory studies branch loci, discriminants, inertia groups, and monodromy. By contrast, the coincidence correspondence
$X\times_Y X$
contains additional information describing how distinct sheets of a finite morphism interact with one another.
The coincidence components introduced in this paper give rise to transport reflections, transport Cartan coefficients, and transport trace invariants. These quantities measure pairwise interactions among coincidence branches and recover the Cartan and Coxeter data of classical reflection quotients. More generally, they furnish a nonlinear root geometry attached to an arbitrary finite morphism.

Throughout the paper we work under generic smoothness and transversality assumptions sufficient to ensure that coincidence components and their associated root walls are smooth. The resulting transport reflections and Cartan invariants are therefore defined on a dense open subset of the coincidence geometry. The extension of the theory to singular coincidence components and higher-order wall singularities will be investigated elsewhere.

\section{Coincidence Correspondences}
\label{sec:Coincidence-correspondences}

\subsection{Notation and the Fiber Product Construction}
\label{subsec:Notation}

Throughout this paper, $X$ and $Y$ will denote smooth irreducible algebraic varieties over an algebraically closed field $k$, unless otherwise stated. We note that all the constructions presented here admit analogous formulations in the categories of complex analytic spaces and smooth manifolds. 

Given a finite morphism
$Q:X\to Y$,
the fundamental geometric object underlying our construction is the
fiber product
\begin{equation}
    \mathcal C_Q:=X\times_YX,
\end{equation}
which we call the \emph{coincidence correspondence} associated with $Q$.
Recall that the fiber product is defined by the cartesian diagram
\[
\begin{tikzcd}
X\times_Y X
\arrow[r,"p_2"]
\arrow[d,"p_1"']
&
X
\arrow[d,"Q"]
\\
X
\arrow[r,"Q"']
&
Y
\end{tikzcd}.
\]
Equivalently, $X\times_Y X = \{(x_1,x_2)\in X\times X: Q(x_1)=Q(x_2)\}$. The natural projections
$p_1,p_2:\mathcal C_Q\to X$
are induced by the coordinate projections on \(X\times X\) and satisfy
$Q\circ p_1 = Q\circ p_2$.
Together they realize \(\mathcal C_Q\) as the fiber product of \(X\) with itself over \(Y\) via the universal property of the fiber product.

The coincidence correspondence captures the geometry of pairs of points lying in the same fiber of $Q$. If $Q^{-1}(y) = \{x_1,\dots,x_d\}$, then the fiber of $\mathcal C_Q\to Y$ over $y$ consists of all ordered pairs $(x_i,x_j)$, for $1\le i,j\le d$. Thus $\mathcal C_Q$ may be viewed as the space of relations among sheets of the finite covering defined by $Q$.

A distinguished component of $\mathcal C_Q$ is the diagonal $\Delta = \{(x,x):x\in X\} \subseteq \mathcal C_Q$. Indeed, $Q(x)=Q(x)$ for every $x\in X$, so the diagonal is always contained in the coincidence correspondence. The diagonal records the trivial coincidence relation. The remaining components encode nontrivial relations among points in the same fiber and will form the basic geometric objects of the theory developed below.
The coincidence correspondence is the fundamental object of the theory.
All subsequent constructions are derived from its irreducible
decomposition.

\subsection{The Coincidence Correspondence}
\label{subsec:Coincidence-Correspondence}

We now study the basic geometric properties of the coincidence correspondence $\mathcal C_Q = X\times_Y X$. Since $\mathcal C_Q$ is defined as a fiber product, many of its properties follow directly from standard facts in algebraic geometry. The following proposition records several fundamental features of $\mathcal C_Q$, including its dimension, its distinguished diagonal component, and the structure of its fibers over the \'etale  locus of the morphism  $Q$.

\begin{proposition}
\label{prop:coincidence-correspondence}

Let $Q:X\to Y$
be a finite dominant morphism between irreducible varieties and let
$\mathcal C_Q=X\times_YX$
denote its coincidence correspondence. Let $U\subseteq Y$ be the maximal open subset over which $Q$ is \'etale of degree $d$. Then
\begin{enumerate} [label = (\roman*)]
    \item $\dim(\mathcal C_Q)=\dim(X)$.
\item The diagonal $\Delta = \{(x,x):x\in X\}$ is a closed irreducible subvariety of $\mathcal C_Q$.
\item 
Let
$Q_{\mathcal C}:=Q\circ p_1=Q\circ p_2:\mathcal C_Q\to Y$.
For every $y\in U$, let
$(\mathcal C_Q)_y:=Q_{\mathcal C}^{-1}(y).$
Then $|(\mathcal C_Q)_y|=d^2$.
\end{enumerate}
\end{proposition}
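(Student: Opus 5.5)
We prove the three assertions separately, using only standard properties of finite morphisms and fiber products.

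For (i), the key input is that finiteness is stable under base change. Since $Q$ is finite, its base change $p_1:\mathcal C_Q=X\times_YX\to X$ is finite. Hence every irreducible component $Z$ of $\mathcal C_Q$ satisfies $\dim Z\le \dim p_1(Z)\le\dim X$, so $\dim\mathcal C_Q\le \dim X$. For the reverse inequality we exhibit a component of full dimension. The diagonal morphism $X\to X\times_YX$, $x\mapsto(x,x)$, is a section of $p_1$, so it is injective and its image has dimension $\dim X$. Therefore $\dim\mathcal C_Q=\dim X$.

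For (ii), a finite morphism is affine and hence separated. For a separated morphism the diagonal $\Delta_{X/Y}:X\to X\times_YX$ is a closed immersion. Its image $\Delta$ is therefore closed in $\mathcal C_Q$, and it is isomorphic to $X$, which is irreducible. So $\Delta$ is a closed irreducible subvariety. (Combined with (i), $\Delta$ is in fact an irreducible component, though the statement does not require this.)

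For (iii), we compute the fiber of $Q_{\mathcal C}$ directly. By the universal property, base change along $\operatorname{Spec}k(y)\to Y$ gives
\[
(\mathcal C_Q)_y\cong X_y\times_{k(y)}X_y,
\qquad X_y:=Q^{-1}(y).
\]
For $y\in U$ the map $Q$ is étale of degree $d$ over $y$, and $k$ is algebraically closed. Hence $X_y$ is a reduced finite scheme consisting of exactly $d$ $k$-points, i.e. $X_y\cong\coprod_{i=1}^d\operatorname{Spec}k$. The product of two such schemes over $k$ is $\coprod_{i,j}\operatorname{Spec}k$, which consists of $d^2$ reduced points $(x_i,x_j)$. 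Thus $|(\mathcal C_Q)_y|=d^2$.

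The only delicate point is in (iii): one must know that étaleness of degree $d$ over $U$ gives exactly $d$ geometric points in each fiber, rather than a nonreduced fiber of length $d$. This holds because étale fibers are reduced and unramified, and $k$ is algebraically closed. Everything else in the argument is formal.
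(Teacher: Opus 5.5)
Your proof is correct and follows the same route as the paper in all three parts. For (i), you use finiteness of $p_1$ by base change together with the diagonal section, which the paper phrases as surjectivity of $p_1$. For (ii), you use closedness of the diagonal and $\Delta\cong X$. For (iii), you identify $(\mathcal C_Q)_y$ with $Q^{-1}(y)\times Q^{-1}(y)$, a product of $d$ reduced points. Your version is just more explicit scheme-theoretically, via separatedness of $Q$ and reducedness of étale fibers over closed points.
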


\begin{proof}

Consider the projection
$p_1:\mathcal C_Q=X\times_YX\longrightarrow X$.
Since
$(x,x)\in \mathcal C_Q$
for every $x\in X$, the projection
$p_1:\mathcal C_Q\to X$
is surjective. Moreover, as finiteness is preserved under base change, $p_1$ is also finite.  
Since $p_1$ is finite and surjective,
$\dim(\mathcal C_Q)
=
\dim(X)$.
The coincidence correspondence therefore has the same dimension as the source variety.

Next we prove (ii). The diagonal $\Delta = \{(x,x) : x \in X\}$ is a closed subvariety of $X \times X$. For every $x \in X$, we have $Q(x) = Q(x)$, so
\[
\Delta \subseteq X \times_Y X = \mathcal{C}_Q.
\]
Hence, $\Delta$ is a closed subvariety of $\mathcal{C}_Q$.
Moreover, the map $X \longrightarrow \Delta$ defined by $x \longmapsto (x,x)$ is an isomorphism. Since $X$ is irreducible, $\Delta$ is irreducible.

Finally we prove (iii). Over $U$, the map $Q$ is a finite \'etale of degree $d$. For $y\in U$, the fiber $Q^{-1}(y) = \{x_1,\ldots,x_d\}$ consists of $d$ distinct points. Hence $(\mathcal C_Q)_y = Q^{-1}(y)\times Q^{-1}(y) = \{(x_i,x_j)\}_{i,j}$. The claim follows.
\end{proof}

\begin{remark}
Part (iii) shows that over the \'etale locus, the coincidence correspondence
records all ordered pairs of points lying in a common fiber of $Q$.
This observation motivates the interpretation of $\mathcal C_Q$ as a space
encoding the relations among the sheets of the finite covering defined by
the morphism $Q$.
\end{remark}

\begin{definition}[Coincidence Component Arrangement]
\label{def:coincidence-component-arrangement}
Let $\mathcal C_Q = \Delta \cup D_1 \cup \cdots \cup D_r$ be the decomposition of $\mathcal C_Q$ into irreducible components, where $D_i\neq \Delta$. The collection $\Phi_Q = \{D_1,\ldots,D_r\}$ is called the \emph{coincidence component arrangement} associated to $Q$.
\end{definition}

\begin{example}
Let
\[
Q:\mathbb A^1\setminus\{0\}\longrightarrow \mathbb A^1,
\qquad
Q(x)=x+x^{-1}.
\]
 Then 
\[
Q(x)-Q(y) = \frac{(x-y)(xy-1)}{xy}.
\]
Hence $\mathcal C_Q = \Delta \cup D$, where $D=\{xy=1\}$. The non-diagonal component records the relation $x \mapsto x^{-1}$. 
\end{example}

\begin{example}
Consider $Q(x)=x^2+x$. Then $Q(x)-Q(y) = (x-y)(x+y+1)$. Therefore, $\mathcal C_Q = \Delta \cup D$, where $D=\{x+y+1=0\}$. The component $D$  records the relation $x \mapsto -x-1$.
\end{example}

The terminology \emph{coincidence component arrangement} is motivated by the fact that, in the special case
where $Q$ arises as the quotient map of a finite reflection group,
the irreducible components $D_i$ recover the classical reflection
arrangement. Indeed, if
$\pi:V\to V/G$
is the quotient map associated to a finite reflection group, then
\[ V\times_{V/G}V
=
\bigcup_{g\in G}\Gamma_g,
\qquad
\Gamma_g=\{(v,g(v)):v\in V\}. \]
Thus each group element determines an irreducible component of the coincidence correspondence.
In particular, reflections correspond to distinguished components
whose intersections with the diagonal recover the classical
reflecting hyperplanes.
These connections will be established in
\Cref{sec:Finite-Galois-Covers}.
More generally, the irreducible components of
$\mathcal C_Q=X\times_YX$
encode the multivalued inverse structure of the finite morphism $Q$.
The collection $\Phi_Q$ provides the fundamental geometric data
from which the constructions developed in the remainder of the paper
are derived.

\begin{definition}[Admissible 
Coincidence Component]
\label{def:admissible-coincidence-component}
Let
$D\in\Phi_Q$.
We say that $D$ is an admissible coincidence component if
\begin{enumerate}[label = (\roman*)]
\item $D$ is generically smooth;
\item the restrictions
$p_1|_D:D\to X$
and $p_2|_D:D\to X$
are generically \'etale.
\end{enumerate}
\end{definition}

Henceforth, all coincidence components are assumed admissible unless explicitly stated otherwise. Throughout this paper, we work over an algebraically closed field of characteristic zero. These assumptions will remain in force unless explicitly indicated otherwise.

\begin{remark}
For a general finite morphism
$Q:X\to Y$,
the coincidence correspondence
$\mathcal C_Q=X\times_Y X$
may contain singular or non-reduced components, particularly along the ramification locus.
Such phenomena are excluded by the admissibility hypothesis adopted throughout this paper.
The study of non-admissible coincidence components, requiring the use of
normalizations, singular correspondences, and scheme-theoretic multiplicities,
lies beyond the scope of the present work and will be pursued elsewhere.
\end{remark}

\section{Transport Symmetries}
\label{sec:Transport-Symmetries}

\subsection{Root Walls}\label{subsec:root-walls}

\begin{definition}
\label{def:root-wall}
Let $D\in\Phi_Q$ be an admissible coincidence component. The associated \emph{root wall} is 
\begin{equation}\label{eq:def-root-wall}
    H_D = D\cap\Delta,
\end{equation}
where $\Delta\subset X\times X$ denotes the diagonal.
\end{definition}

\begin{remark}
\label{remark:root_wall}
Since the diagonal
$\Delta$
is canonically isomorphic to \(X\), we shall often regard
$H_D=D\cap\Delta$
as a subvariety of \(X\) via the identification
$\Delta \cong X$.
Under this identification, the root wall
$H_D=D\cap\Delta$
may be regarded either as a subvariety of $X\times X$ or as a
subvariety of $X$. Accordingly, expressions such as
$x\in H_D$
mean that
$(x,x)\in H_D$.
\end{remark}

\begin{example}
Consider
\[
Q:\mathbb A^1\setminus\{0\}\longrightarrow \mathbb A^1,
\qquad
Q(x)=x+x^{-1}.
\]
The coincidence correspondence is $\mathcal C_Q = \Delta \cup D$, where $D=\{xy=1\}$. The root wall is $H_D = D\cap\Delta = \{x=y,\ x^2=1\}$. Hence $H_D = \{(1,1),(-1,-1)\}$. These are precisely the fixed points of the involution $x\longmapsto x^{-1}$.
\end{example}

\begin{example}
For $Q(x)=x^2+x$, the non-diagonal component is $D=\{x+y+1=0\}$. The root wall is $H_D = D\cap\Delta = \{x=y,\ 2x+1=0\}$. Thus $H_D = \left\{ \left(-\frac12,-\frac12\right) \right\}$. This is the fixed point of the involution $x\longmapsto -x-1$.
\end{example}

\begin{proposition}[Wall Regularity]
\label{prop:wall-regularity}
Let $D\in\Phi_Q$ be an admissible coincidence component.
Let $(x,x)\in H_D$ be a smooth point of $D$.
Assume that $D$ and $\Delta$ intersect transversely at $(x,x)$.
Then $H_D$ is smooth at $(x,x)$.
\end{proposition}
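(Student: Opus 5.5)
The plan is to reduce the claim to the standard fact that a transverse intersection of two smooth subvarieties of a smooth ambient variety is smooth at the point of intersection. The ambient space will be $X\times X$, which is smooth of dimension $2n$, where $n=\dim X$, because $X$ is smooth. The diagonal $\Delta\cong X$ is a smooth closed subvariety of dimension $n$. By hypothesis, $D$ is smooth at $p=(x,x)$. By \Cref{prop:coincidence-correspondence}(i), $\dim \mathcal C_Q=\dim X$. Admissibility (generic \'etaleness of $p_1|_D$) then forces $\dim D=n$, so $D$ is smooth of dimension $n$ at $p$. I will interpret transversality of $D$ and $\Delta$ at $p$ in the usual sense inside $X\times X$:
\[
T_pD+T_p\Delta=T_p(X\times X).
\]

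The first step is local equations. Choose an affine neighbourhood $W$ of $p$ in $X\times X$. Since $D$ and $\Delta$ are smooth at $p$ of codimension $n$, we may shrink $W$ so that, by the Jacobian criterion, each is cut out by $n$ regular functions with linearly independent differentials at $p$:
\[
D\cap W=V(f_1,\dots,f_n),\qquad \Delta\cap W=V(g_1,\dots,g_n).
\]
The second step is to use transversality. It says that the conormal spaces $N^*_{p}D=\langle df_i(p)\rangle$ and $N^*_p\Delta=\langle dg_j(p)\rangle$ intersect trivially in $T^*_p(X\times X)$. Hence the $2n$ differentials $df_1(p),\dots,df_n(p),dg_1(p),\dots,dg_n(p)$ are linearly independent.

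The third step applies the Jacobian criterion again, now to $V(f_1,\dots,f_n,g_1,\dots,g_n)$. This is the scheme-theoretic intersection $H_D\cap W$, so it is smooth at $p$ of dimension $2n-2n=0$. Equivalently, $T_pH_D=T_pD\cap T_p\Delta$ has dimension $\dim T_pD+\dim T_p\Delta-2n=0$. Thus $H_D$ is smooth at $(x,x)$, and in fact is reduced and isolated there.

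The main obstacle is interpretive rather than technical. I must pin down the convention for ``transverse'' and verify $\dim D=n$ at $p$, so that the differential count works. If the paper instead intends the weaker condition $T_pD\cap T_p\Delta$ of the expected dimension (clean intersection), I would run the same argument by choosing a subset of the $dg_j$ that completes the $df_i$ to a maximal independent set. I would then invoke the fact that a clean intersection of smooth subvarieties is smooth, with tangent space $T_pD\cap T_p\Delta$. In either case the heart of the proof is the Jacobian criterion applied to the combined equations. Via the identification of \Cref{remark:root_wall}, the conclusion transfers to $H_D\subset X$.
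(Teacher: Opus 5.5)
Your proof is correct and takes the same route as the paper. The paper simply invokes the standard fact that a transverse intersection of smooth subvarieties is smooth, with $T_{(x,x)}H_D=T_{(x,x)}D\cap T_{(x,x)}\Delta$; you unpack that fact via the Jacobian criterion applied to the combined $2n$ local equations.

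Your side observation is also correct and worth keeping. Since $\dim D=\dim\Delta=n$ inside the $2n$-dimensional $X\times X$, the literal transversality hypothesis forces $T_{(x,x)}H_D=0$, and the paper's own proof uses that same hypothesis. So transversality can hold only at isolated points of $H_D$, and never along the codimension-one walls used later in the paper when $n\ge 2$; there, the clean-intersection reading you mention is what is actually needed.
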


\begin{proof}
Since $X$ is smooth, the diagonal
$\Delta\subset X\times X$
is a smooth subvariety.
By hypothesis, $D$ is smooth at $(x,x)$.
Since $D$ and $\Delta$ are smooth at $(x,x)$, their tangent spaces are well-defined. By the transversality assumption,
\[
T_{(x,x)}D + T_{(x,x)}\Delta = T_{(x,x)}(X\times X).
\]
The standard transversality criterion for smooth subvarieties therefore implies that the intersection $H_D = D\cap\Delta$ is smooth at $(x,x)$. Furthermore, the tangent space of a transverse intersection is the intersection of the tangent spaces. Hence, 
\[
T_{(x,x)}H_D = T_{(x,x)}D \cap T_{(x,x)}\Delta.
\]
\end{proof}

\subsection{Local Transport Symmetries}
\label{subsec:Local-Transport-Symmetries}

Let $D\in\Phi_Q$ be an admissible coincidence component.
Since the projections
$p_1,p_2:D\to X$
are generically \'etale, the component $D$ is locally the graph
of a correspondence between sheets of the covering defined by $Q$.
The following theorem makes this precise.

\begin{theorem}[Local Transport Theorem]
\label{thm:Local-Transport-Theorem}
Let $Q:X\to Y$ be a finite morphism of smooth varieties, and let $D\in\Phi_Q$ be an admissible coincidence component. Let $(x_0,y_0)$ be a point of the regular locus of $D$ such that the differential 
\[
dp_1|_{(x_0,y_0)}:T_{(x_0,y_0)}D\longrightarrow T_{x_0}X
\]
is an isomorphism. Then there exist open neighborhoods $(x_0,y_0)\in W\subset D$ and $x_0\in U\subset X$ such that $p_1|_W:W\to U$ is an isomorphism in the chosen local category. Therefore, there exists a unique morphism $\tau_D:U\to X$   satisfying $(x,\tau_D(x))\in D$ for every $x\in U$. Moreover, $Q\circ\tau_D = Q$ on $U$. 
\end{theorem}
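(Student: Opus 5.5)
The plan is to apply the inverse function theorem to $p_1|_D$ at the point $(x_0,y_0)$, build $\tau_D$ from the local inverse of $p_1$, and then read off $Q\circ\tau_D=Q$ from the fiber-product identity $Q\circ p_1=Q\circ p_2$.

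\textbf{Step 1: local inverse of $p_1$.} Since $(x_0,y_0)$ lies in the regular locus of $D$, the point is smooth on $D$. Then $D$ and $X$ are both smooth of dimension $\dim X$ near the relevant points, using \Cref{prop:coincidence-correspondence}(i) and the fact that $D$ is an irreducible component of $\mathcal C_Q$ dominating $X$ via the generically \'etale $p_1$. By hypothesis $dp_1|_{(x_0,y_0)}$ is an isomorphism, so $p_1|_D$ is \'etale at $(x_0,y_0)$.

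\begin{itemize}
\item In the analytic or smooth categories, the inverse function theorem directly gives open neighborhoods $W\ni(x_0,y_0)$ in $D$ and $U\ni x_0$ in $X$ with $p_1|_W:W\to U$ a biholomorphism (respectively diffeomorphism).
\item In the algebraic category, the correct reading of ``the chosen local category'' is the \'etale or analytic topology (over $\mathbb C$), since an \'etale morphism need not be a Zariski-local isomorphism. I would make this explicit.
\end{itemize}

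I expect this step to be the main obstacle. The issue is not a computation but pinning down what ``isomorphism in the chosen local category'' means algebraically, so that the inverse function theorem legitimately applies.

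\textbf{Step 2: existence of $\tau_D$.} Let $s=(p_1|_W)^{-1}:U\to W$ and set $\tau_D:=p_2\circ s:U\to X$. This is a morphism in the local category, being a composition of such. For $x\in U$ we have $s(x)=(x,\tau_D(x))$, because $p_1(s(x))=x$. Hence $(x,\tau_D(x))\in W\subset D$.

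\textbf{Step 3: uniqueness.} Uniqueness is understood among maps whose graph lies in $W$, or equivalently among continuous maps with $\tau(x_0)=y_0$ after shrinking $U$. Suppose $\tau'$ is another such map with $(x,\tau'(x))\in W$. Then $(x,\tau(x))$ and $(x,\tau'(x))$ are two points of $W$ with the same $p_1$-image. Injectivity of $p_1|_W$ forces $\tau=\tau'$.

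I would flag that uniqueness genuinely requires this localization. Other points of $D$ over the same $x$ may exist, for instance different sheets of $D\to X$ when $p_1|_D$ has degree greater than one.

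\textbf{Step 4: $Q\circ\tau_D=Q$.} On $\mathcal C_Q$ we have $Q\circ p_1=Q\circ p_2$. Evaluating at $s(x)$ gives
\[
Q(x)=Q(p_1(s(x)))=Q(p_2(s(x)))=Q(\tau_D(x))
\]
for all $x\in U$.
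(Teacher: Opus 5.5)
Your proposal is correct and follows essentially the same route as the paper. Étaleness of $p_1|_D$ at $(x_0,y_0)$ gives a local inverse in the étale or analytic category, $\tau_D$ is defined as $p_2\circ(p_1|_W)^{-1}$, uniqueness follows from injectivity of $p_1|_W$, and $Q\circ\tau_D=Q$ follows from $D\subset X\times_Y X$; your caveat that uniqueness only holds among maps whose graph lies in $W$ (for example, continuous maps with $\tau(x_0)=y_0$) makes explicit the assumption hidden in the paper's phrase ``after shrinking $U$ if necessary, the graph of $\sigma$ is contained in $W$.''
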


\begin{proof}
Since $D$ and $X$ are smooth and the differential
\[
dp_1|_{(x_0,y_0)} : T_{(x_0,y_0)}D \longrightarrow T_{x_0}X
\]
is an isomorphism, the morphism $p_1|_D : D \longrightarrow X$ is \'etale at $(x_0, y_0)$.
Therefore, after replacing $D$ and $X$ by sufficiently small \'etale neighborhoods (or analytic neighborhoods when $k = \mathbb{C}$), the morphism $p_1|_D$ admits a local inverse. Thus, there exist neighborhoods $(x_0, y_0) \in W \subset D$ and $x_0 \in U \subset X$ such that
$p_1|_W : W \longrightarrow U$
is an isomorphism in the chosen local category. Define
$\tau_D = p_2|_W \circ (p_1|_W)^{-1}$.

For every $x\in U$,
\[
(p_1|_W)^{-1}(x)
=
\bigl(x,\tau_D(x)\bigr),
\]
since the first coordinate of $(p_1|_W)^{-1}(x)$ is $x$ and its second coordinate is
$p_2\bigl((p_1|_W)^{-1}(x)\bigr)
=
\tau_D(x)$.
Hence
\[
(x,\tau_D(x))
=
(p_1|_W)^{-1}(x)
\in W\subset D.
\]
This proves existence.

For uniqueness, let $\sigma:U\to X$ be another local morphism
such that
($x,\sigma(x))\in D$
for all $x\in U$.
After shrinking $U$ if necessary, the graph of $\sigma$
is contained in $W$.
Since
$p_1|_W:W\to U$
is an isomorphism, every $x\in U$ admits a unique point of $W$
lying over it. Thus
\[
(x,\sigma(x))
=
(p_1|_W)^{-1}(x)
=
(x,\tau_D(x)),
\]
and hence
\[
\sigma=\tau_D.
\]

Finally, because $D \subset X \times_Y X$, every point $(x, \tau_D(x)) \in D$ satisfies $Q(x) = Q(\tau_D(x))$. It follows that
$Q \circ \tau_D = Q$
on $U$.
\end{proof}

\begin{corollary}[Local Graph Property]
Under the hypotheses of the Local Transport Theorem (\Cref{thm:Local-Transport-Theorem}),
there exists an open neighborhood
$W\subset D$
such that
\begin{equation}
    W=\Gamma_{\tau_D}
=
\{(x,\tau_D(x)):x\in U\}.
\end{equation}
In particular, every admissible coincidence component is locally the graph
of its associated transport symmetry.
\end{corollary}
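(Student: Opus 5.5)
The corollary is a repackaging of the Local Transport Theorem (\Cref{thm:Local-Transport-Theorem}), so I will extract it from the objects built there. Take the neighborhoods $(x_0,y_0)\in W\subset D$ and $x_0\in U\subset X$ supplied by that theorem, with $p_1|_W:W\to U$ an isomorphism in the chosen local category and $\tau_D=p_2|_W\circ(p_1|_W)^{-1}$. I then prove the set equality $W=\Gamma_{\tau_D}$ by two inclusions.

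For $\Gamma_{\tau_D}\subseteq W$: the existence part of the theorem's proof already shows that for each $x\in U$,
\[
(x,\tau_D(x))=(p_1|_W)^{-1}(x)\in W .
\]
For $W\subseteq\Gamma_{\tau_D}$: take $w=(a,b)\in W$ and set $x=p_1(w)=a\in U$. Since $p_1|_W$ is bijective onto $U$, we get $w=(p_1|_W)^{-1}(x)$. Hence $b=p_2(w)=\tau_D(x)$, so $w=(x,\tau_D(x))$. Combining the two inclusions gives $W=\{(x,\tau_D(x)):x\in U\}$.

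For the final sentence ("every admissible component is locally the graph"), I use admissibility. By \Cref{def:admissible-coincidence-component}, $D$ is generically smooth and $p_1|_D$ is generically étale, and in characteristic zero this makes $dp_1$ an isomorphism on a dense open subset of the regular locus of $D$. At every point of that set the hypotheses of \Cref{thm:Local-Transport-Theorem} hold, so the graph description applies there. The statement is therefore understood generically, that is, near general points of $D$.

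There is no real obstacle. The only point needing care is that "neighborhood" and "isomorphism" are meant in the local category (étale or analytic) used in the theorem. The set-theoretic argument must be read in that category, with $W$ replaced by its étale or analytic representative, and I will say so explicitly.
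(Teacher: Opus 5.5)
Your proposal is correct and takes the same route as the paper, which simply declares the corollary immediate from the construction $\tau_D=p_2|_W\circ(p_1|_W)^{-1}$ in the Local Transport Theorem; your two-inclusion argument, using that $p_1|_W$ is a bijection onto $U$, spells out exactly that remark. Your treatment of the ``in particular'' clause via admissibility is also sound, though the appeal to characteristic zero is unnecessary: \'etaleness of $p_1|_D$ at a smooth point already forces $dp_1$ to be an isomorphism there.
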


\begin{proof}
Immediate from the construction of $\tau_D$ in the Local Transport
Theorem.
\end{proof}

\begin{definition}[Primitive Transport Symmetry]
\label{def:primitive-transport-symmetry}

Let 
$D\in\Phi_Q$
be an admissible coincidence component.
The associated local transport symmetry
$\tau_D
=
p_2\circ
\bigl(p_1|_D\bigr)^{-1}$
defined on a neighborhood where the projection
$p_1|_D$ is invertible is called a \emph{primitive transport symmetry}
associated with $D$.
\end{definition}

\begin{remark}
By the Local Transport Theorem \Cref{thm:Local-Transport-Theorem}, each admissible coincidence component
$D\in\Phi_Q$ determines a unique local transport symmetry $\tau_D$.
\end{remark}

\begin{definition}[Composite Transport Symmetry]
\label{def:Composite-Transport-Symmetry}
A \emph{composite transport symmetry} is any local symmetry obtained
by composing primitive transport symmetries whenever the composition
is defined.
\end{definition}

Hence, admissible coincidence components correspond precisely to the
primitive generators of the transport geometry, while more general
transport symmetries arise through their compositions.
We also note that the admissible coincidence components constitute the irreducible branches of the multivalued inverse relation determined by $Q$.

\begin{proposition}[Reflection-Type Transport Components]
\label{prop:Reflection-Type Transport-Components}
Let $D\in\Phi_Q$ satisfy the hypotheses of the Local Transport Theorem (\Cref{thm:Local-Transport-Theorem}). Assume in addition that $\iota(D)=D$, where $\iota:X\times X\to X\times X$, $\iota(x,y)=(y,x)$, is the coordinate interchange involution. Then the associated transport map satisfies $\tau_D^{-1} = \tau_D$. In particular, $\tau_D^2 = \operatorname{id}$.
\end{proposition}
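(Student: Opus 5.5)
The plan is to read off the inverse of $\tau_D$ from the graph description in the Local Graph Property and then use $\iota(D)=D$ together with the uniqueness part of the Local Transport Theorem (\Cref{thm:Local-Transport-Theorem}). Work near a point $(x_0,y_0)$ of the regular locus of $D$ where $dp_1$ is an isomorphism. Near this point, on a neighborhood $W$, we have $W=\Gamma_{\tau_D}=\{(x,\tau_D(x)):x\in U\}$.

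\textbf{Step 1: $\tau_D$ is a local isomorphism.} Since $\iota$ is an automorphism of $X\times X$ preserving $D$, it maps smooth points of $D$ to smooth points. It also satisfies $p_2\circ\iota=p_1$. Hence $dp_2$ at $(y_0,x_0)=\iota(x_0,y_0)$ is identified with $dp_1$ at $(x_0,y_0)$, so it is an isomorphism. Consequently $p_2|_D$ is étale near points of $\iota(W)$.

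To get étaleness of $p_2$ on $W$ itself, use that $\tau_D=p_2|_W\circ(p_1|_W)^{-1}$ and that both projections of $D$ are generically étale by admissibility. After shrinking $U$ to avoid the proper closed locus where $dp_2|_D$ degenerates, $\tau_D:U\to\tau_D(U)=:U'$ becomes an isomorphism of neighborhoods in the chosen local category. Its inverse has graph $\{(x',\tau_D^{-1}(x'))\}=\iota(W)$.

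\textbf{Step 2: compare the two graphs.} Because $\iota(D)=D$, the set $\iota(W)=\{(\tau_D(x),x)\}$ lies in $D$. So $\sigma:=\tau_D^{-1}:U'\to X$ is a local morphism with $(x',\sigma(x'))\in D$ for all $x'\in U'$. By the uniqueness clause of \Cref{thm:Local-Transport-Theorem}, $\sigma$ agrees with the primitive transport symmetry of $D$ on $U'$, that is, with the germ of $\tau_D$ there. This gives $\tau_D^{-1}=\tau_D$, and therefore $\tau_D^2=\operatorname{id}$ wherever the composition is defined.

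\textbf{Main obstacle: the branch issue.} Uniqueness in \Cref{thm:Local-Transport-Theorem} is only local: it holds near a given base point of $D$, and $D$ may have several local sheets over a point of $X$. The graph $\iota(W)$ sits near $(y_0,x_0)$, while $\tau_D$ was constructed near $(x_0,y_0)$. So we must ensure that $\tau_D$ on $U'$ is taken to be the branch through $(y_0,x_0)$, with $U'$ a neighborhood of $y_0$, rather than some other branch. This is legitimate because the statement implicitly defines $\tau_D$ as the primitive transport symmetry of $D$ near each admissible point, with $\tau_D$ extended by analytic or étale continuation along the irreducible $D$.

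The cleanest case is when $(x_0,y_0)$ lies near the root wall, that is $x_0=y_0$ or in a small neighborhood of it. Then $(x_0,y_0)$ and $(y_0,x_0)$ lie in the same neighborhood $W$, the two branches coincide, and the identity holds on a common domain. The general case then follows by irreducibility of $D$ and the identity principle: $\Gamma_{\tau_D^2}$ and the diagonal agree on an open set, hence the identity propagates.
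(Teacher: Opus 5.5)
Your Step 2 is the paper's entire proof: $\iota$ carries $\Gamma_{\tau_D}$ to $\Gamma_{\tau_D^{-1}}\subset D$, and uniqueness in the Local Transport Theorem identifies this with $\tau_D$. The paper never addresses the branch issue you raise. You are right to worry about it, but your way of disposing of it fails, and for a base point off the root wall the conclusion is actually false.

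Take $Q(x)=x^3-3x$. Then $Q(x)-Q(y)=(x-y)(x^2+xy+y^2-3)$, and $D=\{x^2+xy+y^2=3\}$ is a smooth, irreducible, admissible, $\iota$-stable component.

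(1) At $(x_0,y_0)=(1,-2)$ one has $x+2y=-3\neq 0$, so the Local Transport Theorem applies. Yet $\tau_D'(1)=-\frac{2x+y}{x+2y}$ evaluated at $(1,-2)$ equals $0$. So $\tau_D$ has no inverse germ. Moreover $\iota(1,-2)=(-2,1)$ is the only point of $D$ over $-2$, and it is a ramification point of $p_1|_D$, so there is no transport germ at $-2$ at all. This is exactly where your Step 1 breaks. The hypotheses give invertibility of $dp_2$ at $(y_0,x_0)$, not at $(x_0,y_0)$. No shrinking of $U$ around $x_0$ can avoid a degeneracy locus that contains the base point.

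(2) Your final propagation claim is false. Work analytically on $U=\{|x|<2\}$. There the branches $y_\pm(x)=\tfrac12\bigl(-x\pm\sqrt{3(4-x^2)}\bigr)$ are disjoint graphs. So $W=\{(x,y_+(x)):|x|<2\}$ is a legitimate choice in the Local Transport Theorem. It passes through the wall point $(1,1)$, and near $x=1$ the map $\tau_D=y_+$ is an involution. Nevertheless
\[
\tau_D\bigl(-\tfrac{11}{7}\bigr)=\tfrac{13}{7},\qquad \tau_D\bigl(\tfrac{13}{7}\bigr)=-\tfrac{2}{7}\neq-\tfrac{11}{7}.
\]
The identity principle gives nothing here. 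The points $1$ and $-\tfrac{11}{7}$ lie in different components of the domain $U\cap\tau_D^{-1}(U)$ of $\tau_D\circ\tau_D$. They are separated at $x=-1$, where $\tau_D(-1)=2$ is a branch point of $p_1|_D$. Nor can continuation along $D$ single out ``the branch through $(y_0,x_0)$''. Since $D$ is irreducible, the monodromy of $p_1|_D$ is transitive on sheets. Defining $\tau_D$ at $y_0$ to be that branch just renames $\tau_D^{-1}$, so the statement becomes a tautology rather than an involution property of a single map.

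What is true is the case of a base point $(x_0,x_0)\in H_D$, which is your ``cleanest case''. This is all the paper later uses: the fixed-locus corollary and the transport reflections and operators at wall points. That case needs neither Step 1 nor any continuation. Since $\iota$ fixes $(x_0,x_0)$ and preserves $D$, the set $W_1:=W\cap\iota(W)$ is an $\iota$-stable open neighborhood of $(x_0,x_0)$ in $D$. Also $p_1|_{W_1}$ is an isomorphism onto an open set $U_1\ni x_0$. For $x\in U_1$, the point $(\tau_D(x),x)=\iota(x,\tau_D(x))$ lies in $W_1$, so $\tau_D(x)\in U_1$. Injectivity of $p_1|_{W_1}$ then gives $\tau_D(\tau_D(x))=x$. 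Thus $\tau_D$ is an involution of $U_1$. You should restrict the statement to this case rather than try to propagate it.
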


\begin{proof}
Let $(x,\tau_D(x))\in D$. Since $\iota(D)=D$, we also have $(\tau_D(x),x)\in D$. By uniqueness of the transport map determined by $D$, $\tau_D(\tau_D(x)) = x$. Hence $\tau_D^2 = \operatorname{id}$.
\end{proof}

\begin{corollary}[Root Wall as Fixed Locus]
Assume the hypotheses of
\Cref{prop:Reflection-Type Transport-Components}.
Under the natural identification
\[
\Delta \cong X,
\qquad
x\longleftrightarrow (x,x),
\]
the root wall coincides with the fixed-point locus of the transport
reflection:
$\operatorname{Fix}(\tau_D)
=
H_D\cap U$.

\end{corollary}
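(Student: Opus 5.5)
The plan is to prove the two inclusions $\operatorname{Fix}(\tau_D)\subseteq H_D\cap U$ and $H_D\cap U\subseteq\operatorname{Fix}(\tau_D)$ separately. Throughout we use the identification $\Delta\cong X$ of \Cref{remark:root_wall} and the local graph description $W=\Gamma_{\tau_D}=\{(x,\tau_D(x)):x\in U\}$ from the Local Graph Property corollary.

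\textbf{Step 1: fixed points lie on the wall.} Let $x\in U$ with $\tau_D(x)=x$. By \Cref{thm:Local-Transport-Theorem}, $(x,\tau_D(x))\in D$, so $(x,x)\in D$. Since $(x,x)\in\Delta$ trivially, we get $(x,x)\in D\cap\Delta=H_D$. Hence $x\in H_D\cap U$.

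\textbf{Step 2: wall points in $U$ are fixed.} Let $x\in U$ with $(x,x)\in H_D$, so $(x,x)\in D$. We want to conclude from the uniqueness part of \Cref{thm:Local-Transport-Theorem} that $\tau_D(x)=x$. Uniqueness says that the unique point of $W$ lying over $x$ under $p_1|_W$ is $(x,\tau_D(x))$. So it suffices to show that $(x,x)$ lies in $W$, and not merely in $D$.

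\textbf{Main obstacle.} The difficulty is exactly this membership $(x,x)\in W$. The fiber $p_1^{-1}(x)\cap D$ can contain several points, and $W$ is only the local sheet through the base point $(x_0,y_0)$. I would handle this by interpreting $U$ and $W$ as the neighborhoods produced in the Local Transport Theorem and shrinking them around the base point. Concretely, I would take the base point to lie on the wall, say $(x_0,x_0)\in H_D$, as the corollary implicitly intends. Then $p_1|_D$ is étale there, so near $(x_0,x_0)$ the only point of $D$ over a nearby $x$ in the sheet $W$ is $(x,\tau_D(x))$. Any wall point $(x,x)$ with $x$ close to $x_0$ is close to $(x_0,x_0)$, so it lies in $W$ once $W$ is chosen as a small neighborhood of $(x_0,x_0)$ in $D$ (analytically, or in the étale sense) with $p_1|_W$ injective. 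Uniqueness then gives $\tau_D(x)=x$.

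I would state explicitly that $U$ is shrunk so that $W$ is a neighborhood of a wall point in $D$. This is the interpretation under which $H_D\cap U$ means the wall points of the sheet $W$, and it is the convention the statement implicitly uses.

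\textbf{Role of $\iota(D)=D$.} The involutive hypothesis from \Cref{prop:Reflection-Type Transport-Components} is not needed for either inclusion. It only justifies calling $\tau_D$ a reflection: by that proposition $\tau_D^2=\operatorname{id}$, so $\operatorname{Fix}(\tau_D)$ is the fixed locus of an involution. I would note this at the end for consistency with the terminology.
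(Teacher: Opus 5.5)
Your proof is correct and follows the same route as the paper's. The inclusion $\operatorname{Fix}(\tau_D)\subseteq H_D\cap U$ comes from $(x,\tau_D(x))\in D$, the reverse inclusion comes from the uniqueness clause of \Cref{thm:Local-Transport-Theorem}, and, as in the paper, $\iota(D)=D$ is never used. The paper's proof simply says ``by uniqueness of the transport map,'' tacitly assuming that $(x,x)$ lies on the sheet $W$; you instead justify this by centering $W$ at a wall point and shrinking $U$. That step is genuinely needed: for $Q(x)=x^3-3x$ and $D=\{x^2+xy+y^2=3\}$, the sheet through $(1,-2)$ gives $1\in H_D\cap U$ but $\tau_D(1)=-2$.
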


\begin{proof}
If $x\in U$ and $(x,x)\in H_D$, then $(x,x)\in D$.
By uniqueness of the transport map,
$\tau_D(x)=x$.
Thus
$x\in\operatorname{Fix}(\tau_D)$.

Conversely, if $\tau_D(x)=x$, then
$(x,\tau_D(x))
=
(x,x)
\in D$.
Since $(x,x)\in\Delta$, we have
$(x,x)\in D\cap\Delta=H_D$.
Therefore
$x\in \{u\in U:(u,u)\in H_D\}$.
\end{proof}

\begin{remark}
This result explains the terminology ``root wall.'' For
reflection-type transport symmetries, the root wall plays the role of
the classical reflecting hyperplane and coincides locally with the
fixed-point locus of the transport reflection.
\end{remark}

The involution $\tau_D$ is called the \emph{transport reflection} associated to the coincidence component $D$. It is a local symmetry that transports points between different sheets of the covering $Q:X\to Y$. The existence of these transport reflections explains the role of coincidence components as the fundamental geometric source of the
root-theoretic structures developed later in the paper.

\begin{example}[The quadratic cover]
\label{ex:quadratic_cover}
Consider $Q:\mathbb A^1\to\mathbb A^1$, $Q(x)=x^2$. The deck group is $G=\{e,g\}$, $g(x)=-x$.
The coincidence correspondence is $\mathcal C_Q = \{(x,y) : x^2=y^2\} = \{x=y\}\cup\{x+y=0\}$. Hence $\mathcal C_Q = \Gamma_e\cup\Gamma_g$, where
$\Gamma_g = \{(x,gx):x\in X\}$
is the graph of the deck transformation $g$.
The unique non-diagonal coincidence component is $D=\Gamma_g$. Its transport symmetry is $\tau_D(x)=-x$.
The root wall is $H_D=\Gamma_g\cap\Delta=\{(0,0)\}$, which corresponds to the fixed-point locus $\operatorname{Fix}(g)=\{0\}$.
Thus the coincidence correspondence recovers both the nontrivial deck transformation and the deck group $G\cong\mathbb Z/2\mathbb Z$. A general theory of finite Galois covers and their connections with coincidence-root framework will be presented in \Cref{sec:Finite-Galois-Covers}.
\end{example}

\section{Transport Root Theory}
\label{sec:Tranport-Root-Theory}

\subsection{Transport Operators}
\label{subsec:Transport-Operators}

The local transport symmetries associated to coincidence components induce natural linear actions on tangent and cotangent spaces.

\begin{definition}[Transport Operators]
\label{def:transport-operator}
Let $D\in\Phi_Q$ be a coincidence component, and let $x\in H_D$ (c.f.  \Cref{remark:root_wall}) be a smooth point of the associated root wall. Suppose that the Local Transport Theorem (\Cref{thm:Local-Transport-Theorem}) applies at $x$, and let $\tau_D$ denote the corresponding local transport symmetry. The induced linear map
\begin{equation}
    R_D(x) = (d\tau_D)_x : T_xX \longrightarrow T_xX
\end{equation}
is called the \emph{transport operator} associated to $D$ at $x$. Its dual action
\begin{equation}
    R_D^*(x) = (d\tau_D)_x^* : T_x^*X \longrightarrow T_x^*X
\end{equation}
is called the \emph{cotangent transport operator}.
\end{definition}

The transport operators provide the infinitesimal realization of the coincidence symmetries.

\subsection{Transport Root Data}
\label{subsec:Transport-Root-Data}

\begin{proposition}[Tangent Space of the Root Wall as the Fixed Eigenspace]
\label{prop:root-wall-fixed-eigenspace}

Let $D$ be an admissible coincidence component and let $x \in H_D$ be a smooth point of the root wall.
Assume that the local transport symmetry $\tau_D$ is defined in a neighborhood of $x$ and that, locally near $x$, the root wall coincides with the fixed-point locus of $\tau_D$:
$H_D = \{u : \tau_D(u) = u\}$.
Let $R_D(x) = d\tau_D|_x: T_xX \longrightarrow T_xX$ denote the transport operator at $x$.
Then $T_xH_D = \ker\!\bigl(I-R_D(x)\bigr)$.
Equivalently, $T_xH_D = \ker\!\bigl(R_D(x)-I\bigr)$.
\end{proposition}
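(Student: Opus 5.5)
We prove the two inclusions separately; the second equality is trivial because $\ker(I-R_D(x))=\ker(R_D(x)-I)$.

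\textbf{Easy inclusion} $T_xH_D\subseteq\ker(I-R_D(x))$. Near $x$ we have $H_D=\operatorname{Fix}(\tau_D)$, so for the inclusion $j:H_D\hookrightarrow X$ we get $\tau_D\circ j=j$ on a neighborhood of $x$ in $H_D$. Differentiating at $x$ (a smooth point, so $T_xH_D$ is the honest tangent space) gives $R_D(x)\,v=v$ for all $v\in T_xH_D$.

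\textbf{Reverse inclusion.} This is the main obstacle, since fixed loci of arbitrary maps can have tangent spaces strictly smaller than the $1$-eigenspace. The extra input is that $\tau_D$ has finite order. In the reflection-type setting we have $\tau_D^2=\mathrm{id}$ by \Cref{prop:Reflection-Type Transport-Components}. More generally, $\tau_D$ is a germ fixing $x$ that permutes the finitely many sheets of $Q$ over $Q(x)$ compatibly with $Q\circ\tau_D=Q$. Assume $\tau_D^m=\mathrm{id}$ near $x$, and work in characteristic zero, as stipulated.

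\textbf{Linearization.} We use Cartan's averaging trick in the completed local ring $\widehat{\mathcal O}_{X,x}$, or in an analytic chart when $k=\mathbb C$. Choose local coordinates $z=(z_1,\dots,z_n)$ centered at $x$ and set
\[
w=\frac1m\sum_{k=0}^{m-1} R_D(x)^{-k}\,(z\circ\tau_D^k).
\]
The differential of $w$ at $x$ is the identity, so $w$ is again a coordinate system, and it satisfies $w\circ\tau_D=R_D(x)\,w$. In these coordinates $\tau_D$ is the linear map $R_D(x)$. Its fixed locus is therefore the linear subspace $\ker(I-R_D(x))$, which is smooth with tangent space $\ker(I-R_D(x))$. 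Since $H_D$ coincides with $\operatorname{Fix}(\tau_D)$ near $x$, comparing tangent spaces gives $T_xH_D=\ker(I-R_D(x))$.

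If one prefers to avoid linearization, there is an alternative in the involution case. Since $R_D(x)^2=I$, we have a splitting $T_xX=E_+\oplus E_-$ into the $\pm1$-eigenspaces, and $\dim\operatorname{Fix}(\tau_D)=\dim E_+$. This can be obtained via the implicit function theorem applied to the map $u\mapsto u-\tau_D(u)$, whose differential $I-R_D(x)$ has image $E_-$ and kernel $E_+$. Combined with the easy inclusion, the dimension count then forces equality.

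If one wants to avoid any finite-order hypothesis altogether, a second fallback is available. Use the transversality hypothesis of \Cref{prop:wall-regularity} and $D=\Gamma_{\tau_D}$ locally, so that $T_{(x,x)}D=\{(v,R_D(x)v)\}$. Then
\[
T_xH_D=T_{(x,x)}D\cap T_{(x,x)}\Delta=\{v: R_D(x)v=v\},
\]
which is exactly the claim.
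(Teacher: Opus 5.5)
\textbf{Comparison with the paper, and the gap in the main argument.} Your route differs from the paper's. The paper writes $H_D=F^{-1}(0)$ with $F=\tau_D-\mathrm{id}$ in local coordinates and concludes $T_xH_D=\ker dF_x=\ker(R_D(x)-I)$ ``because $x$ is a smooth point.'' Your easy inclusion is the valid half of that argument. Your caution about the reverse inclusion is justified: for the reduced fixed locus, smoothness alone does not give it. For example, $u\mapsto u+u^3$ on $\mathbb A^1$ has smooth fixed set $\{0\}$, while $I-d\tau_0=0$. The paper's one-line justification works only if $H_D$ carries the scheme structure of $D\cap\Delta$ and $T_x$ means the Zariski tangent space; then $T_xH_D=T_{(x,x)}D\cap T_{(x,x)}\Delta$ is automatic. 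For the reduced wall, an extra input such as your linearization is genuinely needed.

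However, your proof has a gap exactly where you locate the difficulty: you \emph{assume} $\tau_D^m=\mathrm{id}$. The statement does not assume $\iota(D)=D$, so the reflection-type proposition is unavailable. The remark that $\tau_D$ ``permutes the sheets over $Q(x)$'' does not give finite order either, since $\tau_D$ fixes $x$.

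The missing idea is finiteness of the local deck group at a fixed point. Because $\tau_D(x)=x$ and $Q\circ\tau_D=Q$, the map $\tau_D^*$ is an $A$-algebra endomorphism of $B=\widehat{\mathcal O}_{X,x}$, where $A$ is the image of $\widehat{\mathcal O}_{Y,Q(x)}$. Since $Q$ is finite, $B$ is a domain finite over $A$. Hence $\tau_D^*$ is injective (by incomparability) and defines an element of the finite group $\operatorname{Aut}(L/K)$, where $L=\operatorname{Frac}B$ and $K=\operatorname{Frac}A$. So $(\tau_D^*)^m=\mathrm{id}$ for some $m\ge1$. Analytically, the same conclusion follows because the germ acts faithfully on the local sheets over a generic point near $Q(x)$.

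With this inserted, your averaging in $\widehat{\mathcal O}_{X,x}$ linearizes $\tau_D$ in characteristic zero. In eigen-coordinates the fixed ideal is generated by the $w_i$ with eigenvalue $\neq1$. Since $H_D$ is reduced and set-theoretically equal to this locus near $x$, you get $T_xH_D=\ker(I-R_D(x))$.

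\textbf{The two fallbacks do not work as stated.} In the involution variant, the implicit function theorem does not apply to $u\mapsto u-\tau_D(u)$, because its differential has image $E_-\neq T_xX$. Obtaining $\dim\operatorname{Fix}(\tau_D)=\dim E_+$ amounts to the linearization again.

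In the transversality variant, transversality of $D=\Gamma_{\tau_D}$ and $\Delta$ at $(x,x)$ means $\{(v+w,\,R_D(x)v+w)\}=T_xX\oplus T_xX$, that is, $I-R_D(x)$ is invertible. So it covers only isolated fixed points, where $T_xH_D=0$, and never a codimension-one wall. It is also not a hypothesis of the statement. You therefore cannot avoid the finite-order input this way, except by adopting the scheme-theoretic reading of $H_D$, in which case no transversality is needed at all.
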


\begin{proof}
Since $D$ is admissible, the Local Transport Theorem (\Cref{thm:Local-Transport-Theorem}) implies that,
after shrinking neighborhoods if necessary, $D$ is locally the graph
of the transport symmetry:
\[
D
=
\Gamma_{\tau_D}
=
\{(u,\tau_D(u)):u\in U\}.
\]
By definition,
$H_D
=
D\cap\Delta$.
Since
$\Delta = \{(u,u):u\in U\}$,
it follows that
$H_D = \{u\in U:\tau_D(u)=u\}$.
Working in local coordinates, 
define
\[
F:U\longrightarrow X,
\qquad
F(u)=\tau_D(u)-u.
\]
Then
$H_D=F^{-1}(0)$.
Because $x$ is a smooth point of the fixed-point locus, the tangent
space of $H_D$ is given by the kernel of the differential of $F$:
\[
T_xH_D
=
\ker(dF_x).
\]
Differentiating,
\[
dF_x
=
d\tau_D|_x-I
=
R_D(x)-I.
\]
Therefore
$T_xH_D
=
\ker\!\bigl(R_D(x)-I\bigr)$.
Since
$\ker(R_D(x)-I)
=
\ker(I-R_D(x))$,
the result follows.
\end{proof}

\begin{definition}[Regular Transport Root Point]
\label{def:Regular-Transport-Root-Point}

Let $D_i$ be a coincidence component and let
$x\in H_{D_i}$ be a smooth point of the root wall.
Let
$R_i(x)=d\tau_{D_i}|_x$.
We say that $x$ is a \emph{regular transport root point} if
$\operatorname{rank}\!\bigl(I-R_i(x)\bigr)=1$.
\end{definition}

\begin{remark}
The condition
\(
\operatorname{rank}(I-R_i(x))=1
\)
is the infinitesimal analogue of a complex reflection. In particular,
it is satisfied by ordinary reflections
(\(\zeta_i=-1\))
as well as finite-order complex reflections
(\(\zeta_i\) an arbitrary root of unity).
More general transport operators for which
\(
\operatorname{rank}(I-R_i(x))>1
\)
require a higher-rank theory.  Such higher-rank transport structures are not treated in the present
paper and will be investigated elsewhere.
\end{remark}

\begin{remark}[Generic Regularity Convention]
\label{remark:Generic-Regularity-Convention}
The transport constructions developed in this paper are carried out at regular transport points. Equivalently, we work on the Zariski-open subset of the root wall where the transport operator admits a distinguished one-dimensional transverse eigenspace.
For simplicity, this regularity assumption will be understood throughout the remainder of the paper, unless explicitly stated otherwise.
\end{remark}

\begin{definition}[Transport Root]
\label{def:Transport-Root}
Let $D_i \in \Phi_Q$ and let $x \in H_{D_i}$ be a regular transport root point.
The \emph{transport root line} associated to $D_i$ at $x$ is $L_i(x) = \operatorname{Ann}\!\bigl(\ker(I-R_i(x))\bigr) \subset T_x^*X$.
A nonzero covector $\alpha_i(x) \in L_i(x)$ is called a \emph{transport root}.
\end{definition}

\begin{remark}
The transport operator provides a natural decomposition of the local
geometry into tangential and transverse directions. By
\Cref{prop:root-wall-fixed-eigenspace},
$T_xH_D=\ker(I-R_D(x))$,
so the kernel consists precisely of those tangent vectors lying along
the root wall and fixed by the transport operator.
At a regular transport root point,
$\operatorname{rank}(I-R_D(x))=1$.
Hence $\ker(I-R_D(x))$ has codimension one in $T_xX$, and its
annihilator
$L_D(x)
=
\operatorname{Ann}\!\bigl(\ker(I-R_D(x))\bigr)
\subset T_x^*X$
is one-dimensional. Equivalently,
$L_D(x)=N_x^*(H_D)$,
so a transport root may be viewed as a generator of the conormal line
of the root wall.
The image
$C_D(x):=\operatorname{Im}(I-R_D(x))
\subset T_xX$
is also one-dimensional and represents the distinguished transverse
direction detected by the transport operator. When $R_D(x)$ is
semisimple (for example, when it has finite order), one has
$T_xX
=
\ker(I-R_D(x))
\oplus
\operatorname{Im}(I-R_D(x))$.
Thus transport roots vanish on all directions tangent to the root wall
and detect only displacement in the transverse transport direction.
\end{remark}

\begin{definition}[Transport Cartan Coefficients]
\label{def:Transport-Cartan-Coefficients}
Let $D_i,D_j \in \Phi_Q$ and let $x \in H_{D_i} \cap H_{D_j}$ be a smooth intersection point.
Assume that $x$ is a regular transport root point for both
$D_i$ and $D_j$.
Choose transport roots $\alpha_i(x) \in L_i(x)$ and $\alpha_j(x) \in L_j(x)$.
Since $x$ is a regular transport root point, $C_i(x) = \operatorname{Im}(I-R_i(x)) \subset T_xX$ and $C_j(x) = \operatorname{Im}(I-R_j(x)) \subset T_xX$ are one-dimensional.
Choose vectors $\nu_i(x) \in C_i(x)$ and $\nu_j(x) \in C_j(x)$ normalized by $\alpha_i(x)\bigl(\nu_i(x)\bigr) = 1$ and $\alpha_j(x)\bigl(\nu_j(x)\bigr) = 1$.
The \emph{transport Cartan coefficients} are defined by 
\begin{equation}
   A_{ij}(x) = \alpha_j(x)\bigl(\nu_i(x)\bigr). 
\end{equation}
The matrix $A_Q(x) = \bigl(A_{ij}(x)\bigr)$ is called the \emph{transport Cartan matrix} at $x$.
\end{definition}

\begin{remark}
The line $C_i(x) = \operatorname{Im}(I-R_i(x))$ plays the role of the classical coroot direction. The normalized vector $\nu_i(x)$ is therefore a nonlinear analogue of a classical coroot. However, unlike transport roots, these vectors are introduced only for the purpose of forming Cartan pairings.
\end{remark}

\begin{remark}
The transport Cartan coefficients are intrinsically defined at smooth
points
$x\in H_{D_i}\cap H_{D_j}$,
since both
$\alpha_j(x)\in T_x^*X$
and
$\nu_i(x)\in T_xX$
are attached to the same ambient point.
No defining equations, extensions, or auxiliary connection are
required.
Unless stated otherwise, we suppress the dependence on $x$ and simply
write $A_{ij}$.
The coefficients are fundamentally rank-two objects, depending only on
the pair of coincidence components $(D_i,D_j)$ and their intersection.
\end{remark}

\begin{proposition}[Transformation Law]
\label{prop:Transformation-Law}
 Let
$x\in H_{D_i}\cap H_{D_j}$. 
Let $A_{ij}(x) = \alpha_j(\nu_i)$ be the local transport Cartan coefficients. Suppose the local root vectors and root covectors are rescaled by
$\nu_i \mapsto \lambda_i \nu_i$ and  $\alpha_i \mapsto \lambda_i^{-1}\alpha_i$,
where $\lambda_i$ is a nowhere-vanishing local function, and similarly
$\nu_j \mapsto \lambda_j \nu_j$ and $\alpha_j \mapsto \lambda_j^{-1}\alpha_j$.
Then
$A_{ij} \longmapsto \lambda_i \lambda_j^{-1} A_{ij}$.
It follows that, the product
$\Theta_{ij} := A_{ij} A_{ji}$
is invariant under all such rescalings.
\end{proposition}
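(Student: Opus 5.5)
The argument is a direct substitution into the definition $A_{ij}(x)=\alpha_j(x)\bigl(\nu_i(x)\bigr)$ from \Cref{def:Transport-Cartan-Coefficients}, using only that evaluation of a covector on a vector is bilinear. Nothing beyond pointwise linear algebra at $x\in H_{D_i}\cap H_{D_j}$ is needed. The fact that $\lambda_i,\lambda_j$ are local functions plays no role, since all quantities are evaluated at the single point $x$.

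First I will record that the rescaling is admissible. The new vector $\lambda_i\nu_i$ still lies in the line $C_i(x)=\operatorname{Im}(I-R_i(x))$, and the new covector $\lambda_i^{-1}\alpha_i$ still lies in $L_i(x)$, because both are nonzero scalar multiples of the old ones. The normalization is also preserved:
\[
(\lambda_i^{-1}\alpha_i)(\lambda_i\nu_i)=\lambda_i^{-1}\lambda_i\,\alpha_i(\nu_i)=1 .
\]
The same holds for the index $j$. So the rescaled data is again a legitimate choice in the definition of the transport Cartan coefficients.

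Next I will compute the new coefficient by bilinearity:
\[
A_{ij}\longmapsto(\lambda_j^{-1}\alpha_j)(\lambda_i\nu_i)=\lambda_i\lambda_j^{-1}\,\alpha_j(\nu_i)=\lambda_i\lambda_j^{-1}A_{ij}.
\]
Exchanging the roles of $i$ and $j$ gives $A_{ji}\mapsto\lambda_j\lambda_i^{-1}A_{ji}$. Multiplying the two, the scalar factors cancel, so $\Theta_{ij}=A_{ij}A_{ji}$ is unchanged.

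There is no real obstacle. The only point needing care is to note that every normalized choice of $(\alpha_i,\nu_i)$ arises from a fixed one by such a rescaling. This holds because $L_i(x)$ and $C_i(x)$ are one-dimensional at a regular transport root point, and the constraint $\alpha_i(\nu_i)=1$ forces the two scalars to be mutually inverse. This observation upgrades the invariance statement to the conclusion that $\Theta_{ij}$ is independent of all the choices made in the definition.
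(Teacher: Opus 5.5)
Your proof is correct and matches the paper's argument: substitute the rescaled data into $A_{ij}=\alpha_j(\nu_i)$, use bilinearity to get $A_{ij}\mapsto\lambda_i\lambda_j^{-1}A_{ij}$ and $A_{ji}\mapsto\lambda_j\lambda_i^{-1}A_{ji}$, then multiply. Your extra remark is a worthwhile addition: since $L_i(x)$ and $C_i(x)$ are one-dimensional and $\alpha_i(\nu_i)=1$, every normalized choice arises from a fixed one by such a rescaling, which is what justifies the paper's closing claim that $\Theta_{ij}$ is independent of all choices of root data.
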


\begin{proof}
Under the stated changes,
$A_{ij} = \alpha_j(\nu_i) \longmapsto (\lambda_j^{-1}\alpha_j)(\lambda_i \nu_i) = \lambda_i \lambda_j^{-1} A_{ij}$.
Similarly,
$A_{ji} \longmapsto \lambda_j \lambda_i^{-1} A_{ji}$.
Therefore,
$A_{ij} A_{ji} \longmapsto (\lambda_i \lambda_j^{-1} A_{ij})(\lambda_j \lambda_i^{-1} A_{ji}) = A_{ij} A_{ji}$.
Hence, $\Theta_{ij}$ is independent of the choice of local root vectors and root covectors.
\end{proof}

\begin{remark}
The coefficients $A_{ij}$ themselves depend on the local choice and therefore are not intrinsic. 
The products
$\Theta_{ij}=A_{ij}A_{ji}$
are intrinsic invariants of the coincidence geometry. They play the role of the coordinate-free Cartan data associated to the pair of coincidence components $(D_i,D_j)$.
The individual coefficients $A_{ij}$ play a role analogous to Cartan coefficients in classical root theory.
\end{remark}

\subsection{Transport Trace Forms}
\label{subsec:Transport Trace Forms}

The transport operators associated to coincidence components give rise to natural trace-theoretic invariants. These invariants depend only on the infinitesimal transport geometry and provide the first bridge between coincidence geometry and Cartan-type structures.

\begin{definition}
Let $D_i, D_j \in \Phi_Q$ be coincidence components and let $x \in H_{D_i}\cap H_{D_j}$ be a smooth intersection point. Let $R_i(x) = (d\tau_{D_i})_x$ and $R_j(x) = (d\tau_{D_j})_x$ denote the corresponding transport operators. The \emph{transport trace form} is the function
\begin{equation}
    B_{ij}(x) = \operatorname{tr}\Bigl((I-R_i(x))(I-R_j(x))\Bigr).
\end{equation}
\end{definition}

The transport trace form is intrinsic and depends only on the local transport geometry.

\begin{proposition}[Intrinsic Transport Trace Form]
\label{prop:Intrinsic-TransportTrace-Form}
The quantity $B_{ij}(x)$ is independent of the choice of local coordinates on $X$.
\end{proposition}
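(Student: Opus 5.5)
The plan is to observe that $B_{ij}(x)$ is the trace of an endomorphism of the single vector space $T_xX$, and that the trace of a linear endomorphism does not depend on the basis used to write it as a matrix. Since $x\in H_{D_i}\cap H_{D_j}$, we have $\tau_{D_i}(x)=x$ and $\tau_{D_j}(x)=x$; this follows from the fixed-locus description of the root wall, since $(x,x)\in D_i$ and the uniqueness in \Cref{thm:Local-Transport-Theorem} give $\tau_{D_i}(x)=x$. Consequently $R_i(x)=(d\tau_{D_i})_x$ and $R_j(x)=(d\tau_{D_j})_x$ are both endomorphisms of $T_xX$, not maps between two different tangent spaces. The composite $(I-R_i(x))(I-R_j(x))$ is therefore a well-defined element of $\operatorname{End}(T_xX)$, determined by the germs of $\tau_{D_i},\tau_{D_j}$ at $x$ alone. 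By the uniqueness clause of \Cref{thm:Local-Transport-Theorem}, these germs depend only on $D_i$, $D_j$ and $x$.

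Next I would carry out the coordinate check explicitly. Let $\phi$ and $\psi$ be two local coordinate charts near $x$, and let $J=d(\psi\circ\phi^{-1})$ evaluated at $\phi(x)$, which is invertible. In the $\phi$-chart the differential of $\tau_{D_i}$ at the fixed point is a matrix $M_i$. Because $\tau_{D_i}$ fixes $x$, the chain rule gives the matrix in the $\psi$-chart as $J M_i J^{-1}$. This is a conjugation by a single matrix, not a product $J' M_i J^{-1}$ with $J'$ evaluated at a different point. Likewise $I$ becomes $JIJ^{-1}=I$. Hence
\[
(I-JM_iJ^{-1})(I-JM_jJ^{-1})=J(I-M_i)(I-M_j)J^{-1},
\]
and $\operatorname{tr}(JNJ^{-1})=\operatorname{tr}(N)$ gives the claim.

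The only step needing care is the fixed-point property, because without it the two endpoints of $d\tau$ would live in different tangent spaces and the conjugation would fail. I will settle it by the argument in the first paragraph, that is, by invoking the characterization $H_D\cap U=\operatorname{Fix}(\tau_D)$ (uniqueness of the transport map through $(x,x)\in D$). This characterization holds even without the reflection-type hypothesis. If étale or analytic neighborhoods are used, as in the proof of \Cref{thm:Local-Transport-Theorem}, the same argument applies verbatim, since only first-order data at $x$ enter.
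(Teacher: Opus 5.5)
Your proof is correct and follows the paper's argument: a change of local coordinates conjugates both $R_i(x)$ and $R_j(x)$ by the same invertible Jacobian, so $(I-R_i)(I-R_j)$ is conjugated and its trace is unchanged. Your extra check that $\tau_{D_i}(x)=\tau_{D_j}(x)=x$ makes explicit what the paper's one-line proof takes for granted. This is what guarantees that the conjugating matrix is a single Jacobian evaluated at one point.
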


\begin{proof}
A change of local coordinates conjugates each transport operator by an invertible linear transformation $R_i \mapsto P^{-1}R_iP$. Therefore, $(I-R_i)(I-R_j) \mapsto P^{-1}(I-R_i)(I-R_j)P$. Since the trace is invariant under conjugation, $\operatorname{tr}(P^{-1}(I-R_i)(I-R_j)P) = \operatorname{tr}((I-R_i)(I-R_j))$. Hence, $B_{ij}(x)$ is intrinsic.
\end{proof}

\begin{lemma}[Rank-One Reflection Formula]
\label{lem:rank-one-reflection-formula}
Let $x$ be a regular transport root point and let $R_i: T_xX \to T_xX$ be a finite-order transport operator. Let $\alpha_i \in N_x^*(H_{D_i})$ be a transport root and let $\nu_i \in \operatorname{Im}(I-R_i)$ be normalized so that $\alpha_i(\nu_i) = 1$. If $\zeta_i \neq 1$ denotes the nontrivial eigenvalue of $R_i$ on $\operatorname{Im}(I-R_i)$, then $I - R_i = (1-\zeta_i)\nu_i \otimes \alpha_i$.
\end{lemma}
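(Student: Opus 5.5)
Both sides of the claimed identity are linear endomorphisms of $T_xX$, so I will check that they agree on a basis adapted to $R_i$. Set $K=\ker(I-R_i)$ and $C=\operatorname{Im}(I-R_i)$, and note the two linear facts that drive the argument. Since $x$ is a regular transport root point, $\operatorname{rank}(I-R_i)=1$. Hence $K$ is a hyperplane and $C$ is a line. Since $R_i$ has finite order and we are in characteristic zero, $R_i$ is semisimple. By the remark following \Cref{def:Transport-Root}, this gives $T_xX=K\oplus C$.

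First I will check that $C$ is $R_i$-invariant, which holds because $R_i$ commutes with $I-R_i$. So $R_i$ acts on the line $C$ by a scalar $\zeta_i$. This scalar is not $1$: if it were, $C$ would lie in $K$, contradicting the direct sum decomposition. This justifies the phrase ``the nontrivial eigenvalue of $R_i$ on $\operatorname{Im}(I-R_i)$.'' Next, $\alpha_i$ lies in $L_i(x)=\operatorname{Ann}(K)$. By \Cref{prop:root-wall-fixed-eigenspace} this annihilator is the conormal line $N_x^*(H_{D_i})$, so $\alpha_i$ vanishes on $K$. The normalization $\alpha_i(\nu_i)=1$ forces $\nu_i\neq 0$, so $\nu_i$ spans $C$.

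Now I will evaluate both sides on the decomposition. For $v\in K$, the left side $(I-R_i)v$ is $0$, and the right side $(1-\zeta_i)\alpha_i(v)\nu_i$ is also $0$. On $\nu_i$, the left side gives $(I-R_i)\nu_i=(1-\zeta_i)\nu_i$. The right side gives $(1-\zeta_i)\alpha_i(\nu_i)\nu_i=(1-\zeta_i)\nu_i$. The two maps therefore agree on $K\oplus C=T_xX$, which proves the identity.

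The only substantive step is the decomposition $T_xX=K\oplus C$. Without semisimplicity it could fail, for example if $R_i$ were a transvection with $C\subset K$, in which case no $\nu_i\in C$ could satisfy $\alpha_i(\nu_i)=1$. I will therefore state explicitly that a finite-order operator in characteristic zero is diagonalizable, since its minimal polynomial divides $t^m-1$, which is separable. Semisimplicity means $K$ has an $R_i$-invariant complement. Any such complement is a line on which $I-R_i$ acts injectively, so it equals $C$.
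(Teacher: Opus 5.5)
Your proof is correct and follows essentially the same route as the paper. Both arguments use semisimplicity of the finite-order operator $R_i$ to split $T_xX=\ker(I-R_i)\oplus\operatorname{Im}(I-R_i)$ and then verify the identity on each summand; the paper writes a general vector as $v=w+c\nu_i$, while you check $K$ and $\nu_i$ separately. Your extra justifications fill in steps the paper only asserts: that $t^m-1$ is separable in characteristic zero, that the invariant complement of $K$ must equal $C$, and that $\zeta_i\neq 1$.
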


\begin{proof}
Since $x$ is a regular transport root point, $\operatorname{rank}(I-R_i) = 1$. By \Cref{prop:root-wall-fixed-eigenspace}, $\ker(I-R_i) = T_xH_{D_i}$. Hence $\dim \ker(I-R_i) = \dim(X) - 1$.
Because $R_i$ has finite order, it is semisimple. Therefore $T_xX = \ker(I-R_i) \oplus \operatorname{Im}(I-R_i)$. Since $\operatorname{rank}(I-R_i) = 1$, the space $\operatorname{Im}(I-R_i)$ is one-dimensional and is spanned by $\nu_i$.
The transport root $\alpha_i$ vanishes on $T_xH_{D_i} = \ker(I-R_i)$, and the normalization condition $\alpha_i(\nu_i) = 1$ shows that $\alpha_i$ is the dual functional associated to the decomposition $T_xX = \ker(I-R_i) \oplus k\nu_i$.

Let $v = w + c\nu_i$, where $w \in \ker(I-R_i)$. Since $R_i$ acts trivially on $\ker(I-R_i)$ and by multiplication by $\zeta_i$ on the one-dimensional space $k\nu_i$, we obtain $R_i(v) = w + c\zeta_i\nu_i$.
Therefore $(I-R_i)(v) = c(1-\zeta_i)\nu_i$.
On the other hand, $\alpha_i(v) = \alpha_i(w) + c\alpha_i(\nu_i) = c$, since $\alpha_i$ vanishes on $\ker(I-R_i)$ and $\alpha_i(\nu_i) = 1$.
Hence $(I-R_i)(v) = (1-\zeta_i)\alpha_i(v)\nu_i = \bigl((1-\zeta_i)\nu_i \otimes \alpha_i\bigr)(v)$.
Since this holds for every $v \in T_xX$, we conclude that $I - R_i = (1-\zeta_i)\nu_i \otimes \alpha_i$.
\end{proof}

\begin{remark}
The tensor $\nu_i \otimes \alpha_i \in T_xX \otimes T_x^*X$ is identified with the rank-one endomorphism $T_xX \longrightarrow T_xX$ defined by $v \longmapsto \alpha_i(v)\,\nu_i$. 
Thus the Rank-One Reflection Formula may be written equivalently as $(I-R_i)(v) = (1-\zeta_i)\alpha_i(v)\,\nu_i$ for all $v \in T_xX$.
\end{remark}

The following theorem identifies the transport trace form with a Cartan-type pairing whenever the transport operators behave as finite-order reflections.

\begin{theorem}[Finite-Order Transport Trace Formula]
\label{thm:Finite-Order-Transport-Trace-Formula}
Let $x \in H_{D_i}\cap H_{D_j}$ be a smooth intersection point. Assume that the transport operators $R_i=(d\tau_{D_i})_x$ and $R_j=(d\tau_{D_j})_x$ satisfy:
\begin{enumerate} [label = (\roman*)]
    \item $R_i$ and $R_j$ have finite orders $m_i$ and $m_j$;
    \item  $x$ is a regular transport root point for both
$D_i$ and $D_j$;
\item the nontrivial eigenvalues of $R_i$ and $R_j$ on
$Im(I-R_i)$
and
$Im(I-R_j)$
are
$\zeta_i$
and
$\zeta_j$,
respectively,
for roots of unity $\zeta_i^{m_i}=1$ and $\zeta_j^{m_j}=1$. 
\end{enumerate}
Let $\alpha_i \in N_x^*(H_{D_i})$ and $\alpha_j \in N_x^*(H_{D_j})$ be normalized by $\alpha_i(\nu_i)=1$ and $\alpha_j(\nu_j)=1$. Define $A_{ij}(x) = \alpha_j(\nu_i)$ and $A_{ji} (x) = \alpha_i(\nu_j)$. Then
\[
B_{ij}(x) = (1-\zeta_i)(1-\zeta_j)A_{ij} (x) A_{ji}(x).
\]
When $\zeta_i=\zeta_j=-1$, the formula becomes $B_{ij} = 4A_{ij}A_{ji}$. This is precisely the classical reflection-theoretic relation between trace invariants and symmetrized Cartan data.

\end{theorem}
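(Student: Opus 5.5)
The plan is to reduce everything to the Rank-One Reflection Formula (\Cref{lem:rank-one-reflection-formula}) and then take a trace of a product of two rank-one endomorphisms.

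\textbf{Step 1: rank-one expressions.} Hypotheses (i)--(iii) say that $x$ is a regular transport root point for both $D_i$ and $D_j$, that $R_i$ and $R_j$ have finite order, and that $\zeta_i,\zeta_j$ are their nontrivial eigenvalues on the images of $I-R_i$ and $I-R_j$. These are exactly the hypotheses of the lemma, applied once to each component. We choose $\nu_i\in\operatorname{Im}(I-R_i)$ and $\nu_j\in\operatorname{Im}(I-R_j)$ with $\alpha_i(\nu_i)=\alpha_j(\nu_j)=1$. This normalization is possible because, by semisimplicity, $\operatorname{Im}(I-R_i)$ is complementary to $\ker(I-R_i)=T_xH_{D_i}$, on which $\alpha_i$ vanishes, so $\alpha_i$ is nonzero on $\nu_i$. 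The lemma then gives
\[
I-R_i=(1-\zeta_i)\,\nu_i\otimes\alpha_i,
\qquad
I-R_j=(1-\zeta_j)\,\nu_j\otimes\alpha_j .
\]

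\textbf{Step 2: composition.} With the convention $(\nu\otimes\alpha)(v)=\alpha(v)\nu$, applying the two maps in turn gives
\[
(\nu_i\otimes\alpha_i)(\nu_j\otimes\alpha_j)(v)=\alpha_j(v)\,\alpha_i(\nu_j)\,\nu_i ,
\]
so the composite equals $\alpha_i(\nu_j)\,\nu_i\otimes\alpha_j = A_{ji}\,\nu_i\otimes\alpha_j$. Hence
\[
(I-R_i)(I-R_j)=(1-\zeta_i)(1-\zeta_j)\,A_{ji}\,\nu_i\otimes\alpha_j .
\]

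\textbf{Step 3: trace.} The trace of a rank-one map is $\operatorname{tr}(\nu\otimes\alpha)=\alpha(\nu)$. This is checked by extending $\nu$ to a basis, or by cyclicity of trace. Therefore
\[
\operatorname{tr}(\nu_i\otimes\alpha_j)=\alpha_j(\nu_i)=A_{ij},
\]
and so $B_{ij}=(1-\zeta_i)(1-\zeta_j)A_{ij}A_{ji}$. The special case $\zeta_i=\zeta_j=-1$ is immediate, since $(1-\zeta_i)(1-\zeta_j)=2\cdot 2=4$.

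\textbf{Main obstacle.} No step is computationally hard. The only care needed is bookkeeping: checking that the normalizations used in the lemma are the same as those in the statement, and keeping the order of composition consistent with the tensor convention. If the order were swapped one would get $A_{ij}\,\nu_j\otimes\alpha_i$ instead, whose trace is again $A_{ij}A_{ji}$. So the result is symmetric, consistent with $\operatorname{tr}(AB)=\operatorname{tr}(BA)$, and the formula is insensitive to this choice.
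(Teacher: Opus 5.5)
Your proposal is correct and follows the paper's own proof essentially step for step. You apply the Rank-One Reflection Formula to both operators, compose to get $\alpha_i(\nu_j)\,\nu_i\otimes\alpha_j$, and take the trace using $\operatorname{tr}(\nu\otimes\alpha)=\alpha(\nu)$. Your extra remark justifying that the normalization $\alpha_i(\nu_i)=1$ is achievable is a small addition the paper leaves implicit.
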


\begin{proof}
By the Rank--One Reflection Formula (\Cref{lem:rank-one-reflection-formula}),
\[
I-R_i=(1-\zeta_i)\,\nu_i\otimes\alpha_i,
\qquad
I-R_j=(1-\zeta_j)\,\nu_j\otimes\alpha_j.
\]
Therefore,
\[
(I-R_i)(I-R_j)
=
(1-\zeta_i)(1-\zeta_j)
(\nu_i\otimes\alpha_i)
(\nu_j\otimes\alpha_j).
\]
Recall that a tensor $u\otimes f \in T_xX \otimes T_x^*X$ can be naturally identified with the
rank-one endomorphism $T_xX \longrightarrow T_xX$ given by
$w\longmapsto f(w)\,u$.
Applying this description, we obtain
$(\nu_i\otimes\alpha_i)(\nu_j\otimes\alpha_j)
=
\alpha_i(\nu_j)\,
(\nu_i\otimes\alpha_j)$.
Hence
$(I-R_i)(I-R_j)
=
(1-\zeta_i)(1-\zeta_j)\,
\alpha_i(\nu_j)\,
(\nu_i\otimes\alpha_j)$.
Taking traces and using the standard identity
$\operatorname{tr}(u\otimes f)=f(u)$,
valid for every rank-one operator $u\otimes f$, yields
\[
\begin{aligned}
B_{ij}(x)
&=
\operatorname{tr}\!\bigl((I-R_i)(I-R_j)\bigr)\\
&=
(1-\zeta_i)(1-\zeta_j)\,
\alpha_i(\nu_j)\,
\operatorname{tr}(\nu_i\otimes\alpha_j)\\
&=
(1-\zeta_i)(1-\zeta_j)\,
\alpha_i(\nu_j)\alpha_j(\nu_i).
\end{aligned}
\]
Finally, by definition,
$A_{ij}(x)=\alpha_j(\nu_i)$,
and
$A_{ji}(x)=\alpha_i(\nu_j)$.
Substituting these expressions gives
$B_{ij}(x)
=
(1-\zeta_i)(1-\zeta_j)
A_{ij}(x)A_{ji}(x)$,
which proves the theorem.
\end{proof}

The preceding trace formulas suggest a normalized invariant
which removes the dependence on the eigenvalue phases
\(\zeta_i\) and \(\zeta_j\).
This invariant will play the role of a nonlinear
Coxeter-angle parameter.

\begin{corollary}[Trace Formula for the Transport Invariant $\Theta_{ij}$]
\label{cor:Trace-Theoretic-Expression}

Under the hypotheses of the Finite-Order Transport Trace Formula,
\[
\Theta_{ij}(x)
=
\frac{B_{ij}(x)}
{(1-\zeta_i)(1-\zeta_j)}.
\]

\end{corollary}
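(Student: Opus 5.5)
The corollary follows directly from the Finite-Order Transport Trace Formula (\Cref{thm:Finite-Order-Transport-Trace-Formula}) together with the definition of $\Theta_{ij}$ in \Cref{prop:Transformation-Law}. I would first recall that, under the stated hypotheses, the theorem gives
\[
B_{ij}(x)=(1-\zeta_i)(1-\zeta_j)\,A_{ij}(x)A_{ji}(x),
\]
and that by definition $\Theta_{ij}(x)=A_{ij}(x)A_{ji}(x)$. So the identity reduces to dividing both sides by the scalar $(1-\zeta_i)(1-\zeta_j)$.

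The only point needing care is that this divisor is nonzero. The eigenvalues $\zeta_i$ and $\zeta_j$ are, by hypothesis (iii), the eigenvalues of $R_i$ and $R_j$ on the one-dimensional spaces $\operatorname{Im}(I-R_i)$ and $\operatorname{Im}(I-R_j)$. I would argue directly that $\zeta_i\neq 1$. By semisimplicity (finite order, characteristic zero), $T_xX=\ker(I-R_i)\oplus\operatorname{Im}(I-R_i)$. If $R_i$ acted by $1$ on $\operatorname{Im}(I-R_i)$, that line would lie in $\ker(I-R_i)$, contradicting the directness of the sum. Equivalently, it would contradict $\operatorname{rank}(I-R_i)=1$ from regularity (ii). This is the same nontriviality already used in \Cref{lem:rank-one-reflection-formula}, and the same argument gives $\zeta_j\neq1$. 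Hence $(1-\zeta_i)(1-\zeta_j)\in k^\times$.

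Dividing then yields the formula. I would also note that the right-hand side is independent of the normalizations: $B_{ij}$ is intrinsic by \Cref{prop:Intrinsic-TransportTrace-Form}, and $\zeta_i,\zeta_j$ are spectral data of $R_i,R_j$. This is consistent with the rescaling invariance of $\Theta_{ij}$ established in \Cref{prop:Transformation-Law}. No step here poses a genuine obstacle; the only substantive check is the nonvanishing of the denominator.
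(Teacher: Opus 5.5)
Your proposal is correct and follows the paper's route: you invoke the Finite-Order Transport Trace Formula to get $B_{ij}=(1-\zeta_i)(1-\zeta_j)A_{ij}A_{ji}$, then substitute $\Theta_{ij}=A_{ij}A_{ji}$. Your explicit check that $(1-\zeta_i)(1-\zeta_j)\neq 0$, via semisimplicity together with $\operatorname{rank}(I-R_i)=1$, is a worthwhile addition; the paper leaves this implicit in the phrase ``nontrivial eigenvalue.''
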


\begin{proof}
By the Finite-Order Transport Trace Formula,
$B_{ij}(x)
=
(1-\zeta_i)(1-\zeta_j)
A_{ij}(x)A_{ji}(x)$.
Since
$\Theta_{ij}(x)
=
A_{ij}(x)A_{ji}(x)$,
the result follows.
\end{proof}

\begin{proposition}[Compatibility with Classical Cartan Data]
\label{prop:classical-cartan-compatibility}

Let $V$ be a finite-dimensional Euclidean vector space and let $W \subset \mathrm{GL}(V)$ be a finite Weyl group with root system $\Phi \subset V$. Consider the quotient morphism $Q: V \longrightarrow V/W$.
Let $D_i$ and $D_j$ be the coincidence components corresponding to the reflecting hyperplanes of the simple reflections $s_i,s_j \in W$. Then the transport Cartan coefficients coincide with the normalized classical Cartan coefficients: 
\begin{equation}
    A_{ij} = \frac{1}{2}\,\langle \beta_j,\beta_i^\vee\rangle, \qquad A_{ji} = \frac{1}{2}\,\langle \beta_i,\beta_j^\vee\rangle,
\end{equation}
where $\beta_i,\beta_j$ are the corresponding simple roots and $\beta_i^\vee,\beta_j^\vee$ are the associated classical coroots.
Therefore, 
\begin{equation}
    \Theta_{ij} = A_{ij}A_{ji} = \frac{1}{4}\,\langle \beta_j,\beta_i^\vee\rangle \langle \beta_i,\beta_j^\vee\rangle
\end{equation}
\end{proposition}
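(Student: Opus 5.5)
The plan is to make every object in \Cref{def:Transport-Cartan-Coefficients} explicit for $Q:V\to V/W$. Everything is linear, so the whole computation reduces to the standard description of a Weyl reflection in terms of its root and coroot.

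\emph{Step 1: the components and their transport symmetries.} Two vectors $v,w\in V$ satisfy $Q(v)=Q(w)$ if and only if $w=g(v)$ for some $g\in W$, because invariant polynomials separate orbits of a finite group. Hence set-theoretically $\mathcal C_Q=\bigcup_{g\in W}\Gamma_g$. Each graph $\Gamma_g\cong V$ is closed, smooth and irreducible. The graphs are pairwise distinct because $W$ acts faithfully, and all have dimension $\dim V$, so they are exactly the irreducible components. Both projections $p_1,p_2$ restrict to isomorphisms on $\Gamma_g$, so every $\Gamma_g$ is admissible. For $D_i=\Gamma_{s_i}$, the uniqueness clause of the Local Transport Theorem (\Cref{thm:Local-Transport-Theorem}) gives $\tau_{D_i}=s_i$ globally. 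Since $s_i$ is linear, $R_i(x)=(ds_i)_x=s_i$ at every point $x$ under the identification $T_xV=V$.

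\emph{Step 2: the root wall and the rank-one data.} We have $H_{D_i}=\Gamma_{s_i}\cap\Delta=\operatorname{Fix}(s_i)=\beta_i^{\perp}$, which is a smooth hyperplane. Take any $x\in H_{D_i}\cap H_{D_j}$, for instance $x=0$; the data are independent of $x$ anyway. From $s_i(v)=v-\langle\beta_i,v\rangle\,\beta_i^\vee$, where $\langle\beta_i,v\rangle$ denotes the pairing of the root $\beta_i$ with $v$, we read off $I-R_i=\beta_i^\vee\otimes\beta_i$. This has rank one, so $x$ is a regular transport root point, and $R_i$ has finite order with $\zeta_i=-1$. Its kernel is $\beta_i^\perp=T_xH_{D_i}$, consistent with \Cref{prop:root-wall-fixed-eigenspace}. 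Consequently $L_i(x)=k\,\beta_i$ and $C_i(x)=\operatorname{Im}(I-R_i)=k\,\beta_i^\vee$.

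\emph{Step 3: normalization and the Cartan pairing.} We choose the transport root $\alpha_i=\beta_i$, viewed as a covector. Because $\langle\beta_i,\beta_i^\vee\rangle=2$, the normalization $\alpha_i(\nu_i)=1$ forces $\nu_i=\tfrac12\beta_i^\vee$; this is also what \Cref{lem:rank-one-reflection-formula} predicts, since $(1-\zeta_i)\nu_i\otimes\alpha_i=2\cdot\tfrac12\beta_i^\vee\otimes\beta_i$. Then
\[
A_{ij}=\alpha_j(\nu_i)=\tfrac12\langle\beta_j,\beta_i^\vee\rangle,
\qquad
A_{ji}=\tfrac12\langle\beta_i,\beta_j^\vee\rangle,
\]
and multiplying the two gives the stated formula for $\Theta_{ij}$. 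As a consistency check, \Cref{thm:Finite-Order-Transport-Trace-Formula} yields $B_{ij}=4\Theta_{ij}=\langle\beta_j,\beta_i^\vee\rangle\langle\beta_i,\beta_j^\vee\rangle$, and this agrees with directly computing $\operatorname{tr}\bigl((\beta_i^\vee\otimes\beta_i)(\beta_j^\vee\otimes\beta_j)\bigr)$.

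\emph{Main obstacle.} The one delicate point is interpretive rather than computational. By \Cref{prop:Transformation-Law}, the individual $A_{ij}$ are defined only up to $A_{ij}\mapsto\lambda_i\lambda_j^{-1}A_{ij}$. So the equalities for $A_{ij}$ and $A_{ji}$ must be read as holding for the canonical choice $\alpha_i=\beta_i$, which is distinguished by the classical root datum. Only the identity for $\Theta_{ij}$ is choice-free, and its invariance follows from \Cref{prop:Transformation-Law}.

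A secondary point is that Step 1 uses only the set-theoretic decomposition of $\mathcal C_Q$. The fiber product may be non-reduced along walls, but this is harmless: the theory uses irreducible components, and reduced components suffice.
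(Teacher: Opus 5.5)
Your proof is correct and follows the paper's argument: identify $\tau_{D_i}=s_i$ from the graph decomposition, read off the rank-one form of $I-R_i$ with image spanned by $\beta_i^\vee$, normalize, and pair. Your normalization $(\alpha_i,\nu_i)=(\beta_i,\tfrac12\beta_i^\vee)$ differs from the paper's $(\tfrac12\beta_i,\beta_i^\vee)$ only by a rescaling with the same factor for every index, so by \Cref{prop:Transformation-Law} it gives the same $A_{ij}$; your point that the individual $A_{ij}$ only make sense for such a distinguished choice is valid, and the paper leaves it implicit.
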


\begin{proof}
For the quotient map $Q: V \longrightarrow V/W$, the admissible coincidence components are precisely the graphs of the Weyl-group elements. The components meeting the diagonal in codimension one correspond to the simple reflections.
Let $H_i$ denote the reflecting hyperplane of $s_i$. Since $s_i$ fixes $H_i$ pointwise, 
\[
T_xH_i = \ker(I-s_i) \quad \text{for every } x \in H_i.
\]
Thus the transport operator associated to $D_i$ is $R_i = s_i$.
The image $\operatorname{Im}(I-s_i)$ is the one-dimensional coroot direction spanned by $\beta_i^\vee$. Choose $\nu_i = \beta_i^\vee$. Since $\beta_i(\beta_i^\vee) = 2$, the covector $\alpha_i = \frac{\beta_i}{2}$ satisfies $\alpha_i(\nu_i) = 1$. Hence $(\alpha_i,\nu_i)$ is a normalized transport root datum.
The transport Cartan coefficient is therefore \[A_{ij} = \alpha_j(\nu_i) = \frac{\beta_j(\beta_i^\vee)}{2} = \frac{1}{2} \langle \beta_j,\beta_i^\vee\rangle.\] 
Similarly, $A_{ji} = \frac{1}{2} \langle \beta_i,\beta_j^\vee\rangle$.
Multiplying the two identities yields \[\Theta_{ij} = A_{ij}A_{ji} = \frac{1}{4}\,\langle \beta_j,\beta_i^\vee\rangle \langle \beta_i,\beta_j^\vee\rangle. \]
\end{proof}

\begin{remark}
This compatibility result is a special case of the more general
reconstruction theory developed in
\Cref{sec:Finite-Galois-Covers}.
There we show that for finite Galois covers, admissible coincidence
components correspond to deck transformations. In the present Weyl-group
setting, the coincidence components are precisely the graphs of the
simple reflections, and the transport-root formalism recovers the
classical Cartan data.
\end{remark}

\begin{corollary}[Classical Coxeter Recovery]
\label{cor:classical-coxeter-recovery}
Let $V$ be a finite-dimensional Euclidean vector space and let $W \subset \mathrm{GL}(V)$ be a finite Weyl group with root system $\Phi \subset V$. Consider the quotient morphism $Q: V \longrightarrow V/W$.
Let $\beta_i,\beta_j \in \Phi$ be simple roots and let $m_{ij}$ denote the order of the product of the corresponding simple reflections. Then $\Theta_{ij} = \cos^2(\theta_{ij}) = \cos^2\!\left(\frac{\pi}{m_{ij}}\right)$, where $\theta_{ij}$ is the angle between $\beta_i$ and $\beta_j$.
\end{corollary}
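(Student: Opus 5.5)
The plan is to derive the corollary directly from \Cref{prop:classical-cartan-compatibility} together with classical facts about Weyl groups. That proposition gives
\[
\Theta_{ij}=\tfrac14\,\langle \beta_j,\beta_i^\vee\rangle\langle \beta_i,\beta_j^\vee\rangle .
\]
So the task reduces to expressing this product of classical Cartan integers in terms of the angle between $\beta_i$ and $\beta_j$, and then in terms of $m_{ij}$.

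For the first step, fix the $W$-invariant inner product $(\cdot,\cdot)$ on $V$, which exists because $V$ is Euclidean. Write the coroot as $\beta^\vee = 2\beta/(\beta,\beta)$, so that $\langle \beta_j,\beta_i^\vee\rangle = 2(\beta_j,\beta_i)/(\beta_i,\beta_i)$. Multiplying the two Cartan integers gives
\[
\langle \beta_j,\beta_i^\vee\rangle\langle \beta_i,\beta_j^\vee\rangle=\frac{4(\beta_i,\beta_j)^2}{(\beta_i,\beta_i)(\beta_j,\beta_j)}=4\cos^2\theta_{ij}.
\]
Hence $\Theta_{ij}=\cos^2\theta_{ij}$. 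This is a routine computation, and it is independent of root lengths, which is exactly why the normalized invariant $\Theta_{ij}$ is used.

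The second step relates $\theta_{ij}$ to $m_{ij}$. For $i\neq j$, restrict $s_i s_j$ to the plane spanned by $\beta_i,\beta_j$; it acts trivially on the orthogonal complement. In this plane, $s_i s_j$ is a product of two reflections in lines meeting at angle $\pi-\theta_{ij}$, so it is a rotation by $2(\pi-\theta_{ij})$. For simple roots, the angle between $\beta_i$ and $\beta_j$ is obtuse and equals $\pi-\pi/m_{ij}$; this is the standard classification fact that $\langle\beta_j,\beta_i^\vee\rangle\langle\beta_i,\beta_j^\vee\rangle\in\{0,1,2,3\}$ corresponds to $m_{ij}\in\{2,3,4,6\}$. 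It follows that
\[
\cos^2\theta_{ij}=\cos^2(\pi-\pi/m_{ij})=\cos^2(\pi/m_{ij}),
\]
because squaring removes the sign. Equivalently, one can check the four cases directly: $0,\tfrac14,\tfrac12,\tfrac34$ equal $\cos^2(\pi/m)$ for $m=2,3,4,6$.

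The case $i=j$, with $m_{ii}=1$, gives $\Theta_{ii}=\tfrac14\cdot 2\cdot 2=1=\cos^2\pi$, consistent with $\theta_{ii}=0$.

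The main subtlety is the second step: the identification of the order $m_{ij}$ with the obtuse angle $\pi-\pi/m_{ij}$ between simple roots. I would cite the standard fact from Humphreys that distinct simple roots have nonpositive inner product. Given that, the rotation angle $2\pi/m_{ij}$ of $s_i s_j$ forces $\theta_{ij}=\pi-\pi/m_{ij}$. The sign ambiguity is then harmless because only $\cos^2$ appears.
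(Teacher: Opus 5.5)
Your proposal is correct and follows the paper's proof: both use \Cref{prop:classical-cartan-compatibility} to get $\Theta_{ij}=\tfrac14 C_{ij}C_{ji}=\cos^2\theta_{ij}$, and then invoke the classical relation between $\theta_{ij}$ and $m_{ij}$. Your second step is more explicit than the paper's bare appeal to ``the classical Coxeter relation,'' since you use the obtuse angle $\pi-\pi/m_{ij}$, note that squaring removes the sign, check the four crystallographic cases $m_{ij}\in\{2,3,4,6\}$ directly, and treat $i=j$ separately.
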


\begin{proof}
By \Cref{prop:classical-cartan-compatibility}, 
\begin{equation}\label{eq:Theta-ij}
    \Theta_{ij} = \frac{1}{4}\,C_{ij}C_{ji},
\end{equation} where $C_{ij} = \langle \beta_j,\beta_i^\vee\rangle$ are the classical Cartan coefficients, with 
 $\beta_i^\vee = \frac{2\beta_i}{\langle\beta_i,\beta_i\rangle}$. Hence, we obtain  
\begin{equation} \label{eq:Cij-Cji}
    C_{ij}C_{ji} = 4 \frac{\langle\beta_i,\beta_j\rangle^2}{\langle\beta_i,\beta_i\rangle\langle\beta_j,\beta_j\rangle}.
\end{equation}
From \Cref{eq:Theta-ij,eq:Cij-Cji} it follows that
\begin{equation}
    \Theta_{ij} = \frac{\langle\beta_i,\beta_j\rangle^2}{\langle\beta_i,\beta_i\rangle\langle\beta_j,\beta_j\rangle}.
\end{equation}
Next, using the above equation and the
 classical identity
$\cos(\theta_{ij}) = \frac{\langle\beta_i,\beta_j\rangle}{\sqrt{\langle\beta_i,\beta_i\rangle \langle\beta_j,\beta_j\rangle}}$,
we obtain
\begin{equation}
 \Theta_{ij} =   \cos^2(\theta_{ij}).
\end{equation}
For a finite Weyl group, the classical Coxeter relation implies
\[
\cos^2(\theta_{ij})
=
\cos^2\!\left(\frac{\pi}{m_{ij}}\right),
\]
where $m_{ij}$ is the order of the product of the corresponding simple reflections. Therefore $\Theta_{ij} = \cos^2\!\left(\frac{\pi}{m_{ij}}\right)$.
\end{proof}

\begin{remark}[From Classical to Nonlinear Root Geometry]
For a finite Weyl group, the rank-two interaction between two simple
roots is completely determined by the order $m_{ij}$ of the product of
the corresponding simple reflections. Equivalently,
$\Theta_{ij}
=
\cos^2\!\left(\frac{\pi}{m_{ij}}\right)$.
The possible values are therefore discrete:
$1,\,
\frac34,\,
\frac12,\,
\frac14,\,
0$,
corresponding respectively to
$m_{ij}=1,2,3,4,6$.
These are precisely the values arising from the classical Dynkin
diagrams. For example,
$A_n,D_n,E_n
\quad\Longrightarrow\quad
\Theta_{ij}=\frac14$,
while
$B_n,C_n\quad\Longrightarrow\quad
\Theta_{ij}=\frac12$ and
$G_2\quad\Longrightarrow\quad
\Theta_{ij}=\frac34$.

Thus classical root systems admit only a finite collection of possible
rank-two interactions.
In contrast, for a general finite morphism
$Q:X\to Y$,
the transport invariant
$\Theta_{ij}(x)$
is a geometric quantity which may vary continuously with the point
$x$ and is not constrained to lie in this discrete set. Coincidence
geometry therefore replaces the rigid combinatorial structure of
classical root systems by a genuinely nonlinear geometry of root
interactions.
\end{remark}

\begin{remark}[Transport Angles and Coxeter Data]
The normalized transport invariant $\Theta_{ij}$ is the nonlinear analogue
of the classical Coxeter invariant
$\cos^2\!\left(\frac{\pi}{m_{ij}}\right)$.
Whenever
$0\le \Theta_{ij}\le 1$,
one may define the associated \emph{transport angle} by
$\theta_{ij}
=
\arccos\!\bigl(\sqrt{\Theta_{ij}}\bigr)$.
In the classical Weyl-group setting, let $m_{ij}$ denote the order of
the product of the corresponding simple reflections. The subgroup
generated by these reflections is a finite dihedral group, and the
classical relation
$\Theta_{ij}
=
\cos^2\!\left(\frac{\pi}{m_{ij}}\right)$
shows that the transport angle recovers the usual Coxeter-angle data.
Equivalently,
$\theta_{ij}
=
\frac{\pi}{m_{ij}}$
up to the conventional choice of root orientation. See, for example,
\cite{humphreys1992reflection}.
In the general nonlinear setting, the quantity $\Theta_{ij}$ need not be
constant and may vary from point to point. Thus the transport angle is a
local geometric invariant measuring the interaction of two coincidence
components. The collection of invariants
$\{\Theta_{ij}\}$
may therefore be viewed as a nonlinear analogue of the classical Coxeter
matrix.
\end{remark}

\section{Rank-Two Transport Geometry}
\label{sec:Rank-Two-Transport-Geometry}

\subsection{Rank-Two Reduction Theory}
\label{subsec:Rank-Two Reduction Theory}

The interaction of two coincidence components is governed by the pair of
transport operators $R_i$ and $R_j$. Although these operators act on the
full tangent space $T_xX$, most directions are geometrically irrelevant. In particular,
the common tangent directions to the two root walls are fixed by both
transport operators. The following theorem shows that, after factoring
out these fixed directions, all nontrivial transport geometry is
concentrated in a canonical two-dimensional quotient.

\begin{theorem}[Rank-Two Reduction Theorem]
\label{thm:Rank-Two-Reduction-Theorem}
Let $D_i,D_j\in\Phi_Q$ be coincidence components and let $x\in H_{D_i}\cap H_{D_j}$ be a smooth transverse intersection point which is a regular transport
root point for both coincidence components. Define $F_{ij} = T_xH_{D_i}\cap T_xH_{D_j}$. Then
\begin{enumerate}
    \item[(i)] $\dim(F_{ij})=n-2$, where $n = \dim(X)$;
    \item[(ii)] every vector in $F_{ij}$ is fixed by $R_i$, $R_j$, and $M_{ij}=R_iR_j$;
    \item[(iii)] the operator $M_{ij}$ induces a well-defined linear operator $\overline M_{ij} : T_xX/F_{ij} \to T_xX/F_{ij}$;
    \item[(iv)] $\dim(T_xX/F_{ij})=2$.
\end{enumerate}
\end{theorem}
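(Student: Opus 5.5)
The argument is linear algebra at the single point $x$, carried out in the order (ii), (i), (iv), (iii).

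\textbf{Step 1: the fixed spaces.} Since $x$ is a regular transport root point for $D_i$ and $D_j$, I will use \Cref{prop:root-wall-fixed-eigenspace}, which applies because locally the root wall is the fixed locus of $\tau_{D_k}$. It gives
\[
T_xH_{D_k}=\ker(I-R_k), \qquad k=i,j.
\]
The rank condition $\operatorname{rank}(I-R_k)=1$ then shows that each $T_xH_{D_k}$ is a hyperplane in $T_xX$.

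\textbf{Step 2: part (ii).} Part (ii) is immediate from Step 1. A vector $v\in F_{ij}$ lies in both kernels, so $R_iv=v$ and $R_jv=v$. Hence $M_{ij}v=R_i(R_jv)=R_iv=v$.

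\textbf{Step 3: parts (i) and (iv).} I will read the hypothesis ``smooth transverse intersection'' as saying that $H_{D_i}$ and $H_{D_j}$ meet transversely at $x$:
\[
T_xH_{D_i}+T_xH_{D_j}=T_xX.
\]
Equivalently, the two hyperplanes are distinct, or the transport roots $\alpha_i,\alpha_j$ are linearly independent. The dimension formula then gives
\[
\dim F_{ij}=(n-1)+(n-1)-n=n-2,
\]
which is (i). Part (iv) follows by subtraction, $\dim T_xX/F_{ij}=2$. Dually, $F_{ij}=\ker\alpha_i\cap\ker\alpha_j$, so $T_xX/F_{ij}\cong k^2$ via $v\mapsto(\alpha_i(v),\alpha_j(v))$.

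\textbf{Step 4: part (iii).} For $\overline M_{ij}$ to be well defined I only need $M_{ij}(F_{ij})\subseteq F_{ij}$. This holds because Step 2 shows $M_{ij}$ fixes $F_{ij}$ pointwise. So $\overline M_{ij}[v]:=[M_{ij}v]$ is independent of the representative and is linear. The same argument shows that $R_i$ and $R_j$ each descend, with $\overline M_{ij}=\overline R_i\,\overline R_j$.

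\textbf{Main obstacle.} The only delicate point is interpretive. ``Transverse intersection'' has to be taken to mean that the root walls, not $D_i$ and $D_j$ inside $X\times X$, meet transversely, so that the hyperplanes $\ker(I-R_i)$ and $\ker(I-R_j)$ are distinct. If they coincided, $\dim F_{ij}$ would be $n-1$ and (i) would fail. I will state this reading explicitly and note that it is equivalent to $\alpha_i\wedge\alpha_j\neq0$.
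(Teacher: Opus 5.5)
Your proposal is correct and follows essentially the same argument as the paper: transversality of the walls combined with codimension one gives $\dim F_{ij}=n-2$, and vectors in $F_{ij}$ are fixed by $R_i$ and $R_j$, hence by $M_{ij}$, which therefore preserves $F_{ij}$ and descends to the two-dimensional quotient. Your derivation of codimension one from $\operatorname{rank}(I-R_k)=1$ via \Cref{prop:root-wall-fixed-eigenspace} makes explicit a step that the paper only asserts.
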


\begin{proof}
Since the intersection is transverse, $T_xH_{D_i}+T_xH_{D_j}=T_xX$. Therefore,
\[
\dim(F_{ij}) = \dim(T_xH_{D_i}) + \dim(T_xH_{D_j}) - \dim(T_xX).
\]
Since both walls have codimension one, $\dim(T_xH_{D_i}) = \dim(T_xH_{D_j}) = n-1$. Hence it follows that
\[
\dim(F_{ij}) = (n-1)+(n-1)-n = n-2.
\]
For $v\in F_{ij}$, since $v\in T_xH_{D_i}$, we have $R_i(v)=v$. Similarly, $R_j(v)=v$, so $M_{ij}(v) = R_iR_j(v) = v$. Thus $F_{ij} \subseteq \operatorname{Fix}(M_{ij})$. Since $F_{ij}$ is preserved by $M_{ij}$, the quotient $T_xX/F_{ij}$ inherits an induced operator $\overline M_{ij}$. Finally, $\dim(T_xX/F_{ij}) = n-(n-2) = 2$.
\end{proof}

The theorem shows that the interaction of the coincidence components
$D_i$ and $D_j$ is completely encoded by the induced operator
$\overline M_{ij}:T_xX/F_{ij}\to T_xX/F_{ij}$
on a canonical two-dimensional quotient. Therefore, all rank-two
transport invariants may be recovered from the dynamics of
$\overline M_{ij}$.

\begin{theorem}[Rank--Two Characteristic Polynomial]
\label{thm:rank-two-characteristic-polynomial}
Under the hypotheses of the Rank--Two Reduction Theorem (\Cref{thm:Rank-Two-Reduction-Theorem}), the characteristic polynomial of the reduced transport operator  $\overline M_{ij}$ is 
\begin{equation} \label{eq:characteristic_plynomial_reduced_transport_operator}
   p_{ij}(t) = t^2 - \Bigl( \zeta_i+\zeta_j + (1-\zeta_i)(1-\zeta_j)\Theta_{ij} \Bigr)t + \zeta_i\zeta_j.
\end{equation}
\end{theorem}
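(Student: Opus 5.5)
The plan is to compute the trace and determinant of $\overline M_{ij}$ in an explicit basis of the two-dimensional quotient $T_xX/F_{ij}$. The hypotheses of the theorem implicitly include those of the Finite-Order Transport Trace Formula (\Cref{thm:Finite-Order-Transport-Trace-Formula}): finite order and nontrivial eigenvalues $\zeta_i,\zeta_j$. So we may use the Rank-One Reflection Formula (\Cref{lem:rank-one-reflection-formula}):
\[
R_i=I-(1-\zeta_i)\,\nu_i\otimes\alpha_i,\qquad R_j=I-(1-\zeta_j)\,\nu_j\otimes\alpha_j,
\]
with $\alpha_i(\nu_i)=\alpha_j(\nu_j)=1$, $A_{ij}=\alpha_j(\nu_i)$ and $A_{ji}=\alpha_i(\nu_j)$.

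The first step is to produce a basis of the quotient.
\begin{itemize}
\item Since $F_{ij}=\ker\alpha_i\cap\ker\alpha_j$ has codimension two by \Cref{thm:Rank-Two-Reduction-Theorem}(i), the covectors $\alpha_i,\alpha_j$ are linearly independent.
\item They therefore descend to a basis of $(T_xX/F_{ij})^*$, and $(\alpha_i,\alpha_j)$ identifies $T_xX/F_{ij}\cong k^2$.
\item Both $R_i$ and $R_j$ preserve $F_{ij}$, so each descends to an operator $\overline R_i$, $\overline R_j$ on the quotient, and $\overline M_{ij}=\overline R_i\,\overline R_j$.
\end{itemize}

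The second step is to compute the action on the dual basis. Rather than choosing vectors, I would work with the transposes acting on covectors, using
\[
\alpha\circ R_i=\alpha-(1-\zeta_i)\,\alpha(\nu_i)\,\alpha_i .
\]
This gives
\[
\alpha_i\circ R_i=\zeta_i\alpha_i,\qquad \alpha_j\circ R_i=\alpha_j-(1-\zeta_i)A_{ij}\,\alpha_i,
\]
and symmetrically for $R_j$. In the basis $(\alpha_i,\alpha_j)$ the transposes are therefore the triangular matrices
\[
\begin{pmatrix}\zeta_i & -(1-\zeta_i)A_{ij}\\ 0&1\end{pmatrix},\qquad
\begin{pmatrix}1&0\\ -(1-\zeta_j)A_{ji}&\zeta_j\end{pmatrix}.
\]
The characteristic polynomial of $\overline M_{ij}$ equals that of its transpose, which is the product of these transposes in reverse order.

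The third step reads off trace and determinant. The determinant is $\zeta_i\zeta_j$. Multiplying out, the trace is
\[
\zeta_i+\zeta_j+(1-\zeta_i)(1-\zeta_j)A_{ij}A_{ji}=\zeta_i+\zeta_j+(1-\zeta_i)(1-\zeta_j)\Theta_{ij}.
\]
Since $p_{ij}(t)=t^2-(\operatorname{tr})\,t+\det$, this gives \eqref{eq:characteristic_plynomial_reduced_transport_operator}.

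As a cross-check on the trace, one can use the full space. We have $\operatorname{tr}M_{ij}=\operatorname{tr}\overline M_{ij}+(n-2)$, because $M_{ij}$ is the identity on $F_{ij}$. Expanding $\operatorname{tr}\bigl((I-X)(I-Y)\bigr)$ with $X=I-R_i$, $Y=I-R_j$, together with $B_{ij}=\operatorname{tr}(XY)$ from \Cref{thm:Finite-Order-Transport-Trace-Formula}, gives the same expression.

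The main obstacle is the justification that $\alpha_i,\alpha_j$ are independent and descend to the quotient, which is where transversality enters. One also has to be careful about transposes versus the operator itself when ordering the product. Because trace and determinant are invariant under transposition, this ordering issue causes no difficulty.
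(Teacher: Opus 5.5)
Your proof is correct. It is the paper's computation carried out in the dual space, and that change is not merely cosmetic.

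The paper takes $([\nu_i],[\nu_j])$ as a basis of $T_xX/F_{ij}$, writes $\overline R_i,\overline R_j$ as triangular matrices in that basis, and multiplies. You instead let the transposes act on $\operatorname{Ann}(F_{ij})=\operatorname{span}(\alpha_i,\alpha_j)$. This gives the same triangular shapes with the roles of $A_{ij}$ and $A_{ji}$ interchanged, and the trace and determinant then come out identically.

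The advantage of your choice is that transversality of the walls is exactly what makes $\alpha_i,\alpha_j$ independent. The paper instead asserts that transversality makes $[\nu_i],[\nu_j]$ independent, and this is not correct in general. Under the isomorphism $[v]\mapsto(\alpha_i(v),\alpha_j(v))$ these classes go to $(1,A_{ij})$ and $(A_{ji},1)$. These vectors are dependent precisely when $\Theta_{ij}=1$; for instance, at $z=\pm1$ in \Cref{ex:transport-atlas-z2} the walls are transverse but $\nu_2=\pm\nu_1$. So the paper's argument needs a separate continuity or polynomial-identity step on the affine locus. Your argument covers every transverse regular point uniformly, including $\Theta_{ij}=1$.

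Your trace cross-check via $\operatorname{tr}M_{ij}=\operatorname{tr}\overline M_{ij}+(n-2)$ and $B_{ij}$ is also valid and gives a basis-free confirmation. The same reasoning gives $\det\overline M_{ij}=\det R_i\det R_j=\zeta_i\zeta_j$.

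The one step you should state rather than assume is $F_{ij}=\ker\alpha_i\cap\ker\alpha_j$. It follows from $\ker\alpha_i=\ker(I-R_i)=T_xH_{D_i}$, using regularity and \Cref{prop:root-wall-fixed-eigenspace}.
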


\begin{proof}
By the Rank--Two Reduction Theorem, $\overline M_{ij} = \overline R_i\,\overline R_j$ acts on the two-dimensional quotient $\overline V = T_xX/F_{ij}$, where $F_{ij} = T_xH_{D_i}\cap T_xH_{D_j}$.
Choose vectors $\nu_i \in \operatorname{Im}(I-R_i)$ and $\nu_j \in \operatorname{Im}(I-R_j)$ normalized by $\alpha_i(\nu_i)=1$ and $\alpha_j(\nu_j)=1$.  
Since the walls intersect transversely,
$[\nu_i]$ and $[\nu_j]$ are linearly independent
in $T_xX/F_{ij}$, and therefore form a basis of $\overline V$.
Therefore, we get the decomposition $v=w+\alpha_i(v)\nu_i$, where $w \in \ker(I-R_i)$. It implies $R_i(v) = v+(\zeta_i-1)\alpha_i(v)\nu_i$. Applying this to $\nu_i$ and $\nu_j$ gives $R_i(\nu_i) = \zeta_i\nu_i$ and $R_i(\nu_j) = \nu_j+(\zeta_i-1)A_{ji}\nu_i$. Hence, in the basis $([\nu_i],[\nu_j])$, the reduced operator is represented by 
\begin{equation}
    \overline R_i = \begin{pmatrix} \zeta_i & (\zeta_i-1)A_{ji}\\ 0 & 1 \end{pmatrix}.
\end{equation}
Similarly, $R_j(v) = v+(\zeta_j-1)\alpha_j(v)\nu_j$, so $R_j(\nu_i) = \nu_i+(\zeta_j-1)A_{ij}\nu_j$ and $R_j(\nu_j) = \zeta_j\nu_j$. Therefore \begin{equation}
    \overline R_j = \begin{pmatrix} 1 & 0\\ (\zeta_j-1)A_{ij} & \zeta_j \end{pmatrix}.
\end{equation}
A direct matrix multiplication yields $\overline M_{ij} = \overline R_i\,\overline R_j$, with $\operatorname{tr}(\overline M_{ij}) = \zeta_i+\zeta_j + (1-\zeta_i)(1-\zeta_j)A_{ij}A_{ji}$. Since $\Theta_{ij} = A_{ij}A_{ji}$, it follows that $\operatorname{tr}(\overline M_{ij}) = \zeta_i+\zeta_j + (1-\zeta_i)(1-\zeta_j)\Theta_{ij}$.
Moreover, $\det(\overline M_{ij}) = \det(\overline R_i)\det(\overline R_j) = \zeta_i\zeta_j$.
Since $\overline M_{ij}$ acts on a two-dimensional vector space, its characteristic polynomial is $p_{ij}(t) = t^2 - \operatorname{tr}(\overline M_{ij})\,t + \det(\overline M_{ij})$. Substituting the trace and determinant formulas gives 
the desired characteristic polynomial of the reduced transport operator as shown in 
\Cref{eq:characteristic_plynomial_reduced_transport_operator}.
\end{proof}

By the preceding theorem, the invariant $\Theta_{ij}$ uniquely determines the
characteristic polynomial of $\overline M_{ij}$.

\begin{corollary}[Conjugacy Reconstruction]
\label{cor:conjugacy-reconstruction}

Let $D_i,D_j\in\Phi_Q$ satisfy the hypotheses of the
Rank--Two Reduction Theorem (\Cref{thm:Rank-Two-Reduction-Theorem}).
Away from the repeated-eigenvalue locus, the invariant
$\Theta_{ij}$ determines the reduced transport operator
$\overline M_{ij}$ up to conjugacy.
\end{corollary}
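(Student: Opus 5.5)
The argument reduces to linear algebra on the two-dimensional quotient $\overline V = T_xX/F_{ij}$ supplied by \Cref{thm:Rank-Two-Reduction-Theorem}. The eigenvalue phases $\zeta_i,\zeta_j$ are treated as fixed data attached to the pair $(D_i,D_j)$. \Cref{thm:rank-two-characteristic-polynomial} then shows that the characteristic polynomial $p_{ij}(t)$ of $\overline M_{ij}$ depends on the geometry only through $\Theta_{ij}$: the determinant is always $\zeta_i\zeta_j$, and the trace is $\zeta_i+\zeta_j+(1-\zeta_i)(1-\zeta_j)\Theta_{ij}$. So $\Theta_{ij}$ determines $p_{ij}$, and therefore the unordered pair of eigenvalues $\{\lambda_1,\lambda_2\}$ of $\overline M_{ij}$, since $k$ is algebraically closed.

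Next I make the phrase ``away from the repeated-eigenvalue locus'' precise. It means the discriminant
\[
\Bigl(\zeta_i+\zeta_j+(1-\zeta_i)(1-\zeta_j)\Theta_{ij}\Bigr)^2-4\zeta_i\zeta_j
\]
is nonzero. In that case $\lambda_1\neq\lambda_2$, so $\overline M_{ij}$ has two distinct eigenvalues on the two-dimensional space $\overline V$ and is diagonalizable. Hence $\overline M_{ij}$ is conjugate to $\operatorname{diag}(\lambda_1,\lambda_2)$.

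Now take two configurations (two points, or two pairs of components) with the same $\zeta_i,\zeta_j$ and the same value of $\Theta_{ij}$, both off the repeated-eigenvalue locus. Their reduced operators are diagonalizable with the same spectrum, so they are conjugate to the same diagonal matrix and hence to each other. To compare operators living on different quotient spaces, I fix a linear isomorphism between them. The explicit basis $([\nu_i],[\nu_j])$ from the proof of \Cref{thm:rank-two-characteristic-polynomial} works: the matrix of $\overline M_{ij}=\overline R_i\overline R_j$ in that basis is an explicit $2\times 2$ matrix, and I compare these matrices in $\mathrm{GL}_2(k)$.

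There is no genuine obstacle; the only care needed is the hypothesis. One must note that the phases $\zeta_i,\zeta_j$ are held fixed, so that ``$\Theta_{ij}$ determines'' is meant with them given, and must explain why the repeated-eigenvalue locus is excluded. On that locus, $p_{ij}$ alone cannot distinguish a scalar operator from a nontrivial Jordan block. I would add a brief remark on this, noting that there the conjugacy class also depends on whether $\overline M_{ij}-\lambda I$ vanishes, which $\Theta_{ij}$ does not detect in general.
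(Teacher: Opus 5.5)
Your proof is correct and follows the paper's argument. With $\zeta_i,\zeta_j$ fixed, \Cref{thm:rank-two-characteristic-polynomial} makes $p_{ij}$ a function of $\Theta_{ij}$, and off the repeated-eigenvalue locus the distinct roots force $\overline M_{ij}$ to be diagonalizable, so its conjugacy class is determined. You are more explicit than the paper about holding the phases fixed and about why the excluded locus matters.

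One small aside concerns the basis $([\nu_i],[\nu_j])$. Transversality alone does not make it a basis of $T_xX/F_{ij}$: in the coordinates $(\alpha_i,\alpha_j)$ these classes are $(1,A_{ij})$ and $(A_{ji},1)$, so they become dependent exactly when $\Theta_{ij}=1$. That value lies off the repeated-eigenvalue locus whenever $\zeta_i\zeta_j\neq1$. You never need this basis, though, because eigenbases of the two diagonalizable operators already give the conjugating isomorphism.
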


\begin{proof}
By \Cref{thm:rank-two-characteristic-polynomial},
$\Theta_{ij}$ determines the characteristic polynomial of
$\overline M_{ij}$.
Away from the repeated-eigenvalue locus, this polynomial has distinct
roots and therefore determines the conjugacy class of the corresponding
two-dimensional linear operator.
\end{proof}

\begin{remark}
The preceding results show that the entire rank--two transport geometry
is encoded by the single scalar invariant $\Theta_{ij}$. In particular,
$\Theta_{ij}$ determines the characteristic polynomial, the spectrum,
and, away from the repeated-eigenvalue locus, the conjugacy class of the
reduced transport operator. Thus the nonlinear Cartan invariant
$\Theta_{ij}$ plays the role of a complete rank--two invariant for the
transport geometry associated to the pair of coincidence components
$(D_i,D_j)$. However, $\Theta_{ij}$ does not in general determine the global
geometry of the corresponding root walls.
\end{remark}

\subsection{Rank-Two Transport Classification}
\label{subsec:Rank-Two-Transport-Classification}

The Rank-Two Reduction Theorem  (\Cref{thm:Rank-Two-Reduction-Theorem}) shows that all nontrivial transport data associated to a pair of coincidence components are encoded by the reduced operator $\overline M_{ij}$. We now classify the finite-order involutive case.

\begin{definition}[Rank--Two Transport Cartan Matrix]
\label{def:Rank--Two-Transport-Cartan-Matrix}
Let
$\Theta_{ij}=A_{ij}A_{ji}$.
The associated rank--two transport Cartan matrix is
$\mathcal C_{ij}
=
\begin{pmatrix}
2A_{ii} & -2A_{ij}\\
-2A_{ji} & 2A_{jj}
\end{pmatrix}$.
By the normalization
$\alpha_i(\nu_i)=\alpha_j(\nu_j)=1$,
we have
$A_{ii}=A_{jj}=1$.
Therefore,
\begin{equation}
    \mathcal C_{ij}
=
\begin{pmatrix}
2 & -2A_{ij}\\
-2A_{ji} & 2
\end{pmatrix}.
\end{equation}
Its determinant is
\begin{equation}
\label{eq:cartan-determinant}
\det(\mathcal C_{ij})
=
4(1-\Theta_{ij}).
\end{equation}
\end{definition}

\begin{remark}
The normalization
$\mathcal C_{ij}
=
\begin{pmatrix}
2A_{ii} & -2A_{ij}\\
-2A_{ji} & 2A_{jj}
\end{pmatrix}$
is chosen so that, in the Weyl-group case,
$\mathcal C_{ij}
=
\begin{pmatrix}
2 & -\langle \alpha_j,\alpha_i^\vee\rangle\\
-\langle \alpha_i,\alpha_j^\vee\rangle & 2
\end{pmatrix}$,
recovering the classical Cartan matrix exactly.  
\end{remark}

\begin{remark}[Discriminant Formula]
In the reflection case
$\zeta_i=\zeta_j=-1$,
the Rank--Two Characteristic Polynomial becomes
$p_{ij}(t)
=
t^2-(4\Theta_{ij}-2)t+1$.
Its discriminant is
\begin{equation} \label{eq:discriminant-formula}
    \Delta
=
(4\Theta_{ij}-2)^2-4
=
16\Theta_{ij}(\Theta_{ij}-1)
\end{equation}

Let us assume that $\Theta_{ij}\in\mathbb R$. Since the constant term of \(p_{ij}(t)\) is \(1\), the eigenvalues of
the reduced transport operator occur as a reciprocal pair. When
\(\Delta<0\), they form a complex conjugate pair on the unit circle,
corresponding to elliptic dynamics. When \(\Delta=0\), the polynomial
has a repeated root, corresponding to parabolic dynamics. When
\(\Delta>0\), the eigenvalues are distinct real reciprocals,
corresponding to hyperbolic dynamics.
\end{remark}

The invariant $\Theta_{ij}$ naturally divides rank-two transport geometry into three regimes.

\begin{definition}[Rank--Two Transport Type]
Assume $\Theta_{ij}\in\mathbb R$.
A pair of coincidence components $(D_i,D_j)$ is said to be
\begin{enumerate}[label=(\roman*)]
\item of \emph{finite transport type} if $0<\Theta_{ij}<1$;
\item of \emph{affine transport type} if $\Theta_{ij}=1$;
\item of \emph{indefinite transport type} if $\Theta_{ij}>1$ or $\Theta_{ij}<0$.
\end{enumerate}
\end{definition}

\begin{remark}
Under the classical Cartan normalization
$A_{ij}^{\mathrm{Cartan}}
=
2A_{ij}^{\mathrm{transport}}$,  
the conditions
\[
0<\Theta_{ij}<1,\qquad
\Theta_{ij}=1,\qquad
\Theta_{ij}>1
\]
become
\[
A_{ij}^{\mathrm{Cartan}}A_{ji}^{\mathrm{Cartan}}<4,
\qquad
A_{ij}^{\mathrm{Cartan}}A_{ji}^{\mathrm{Cartan}}=4,
\qquad
A_{ij}^{\mathrm{Cartan}}A_{ji}^{\mathrm{Cartan}}>4,
\]
which recover the classical finite, affine, and indefinite rank-two
Coxeter trichotomy.
\end{remark}

\begin{remark}
The special value $\Theta_{ij}=0$ satisfies $\Delta=0$ and therefore
lies in the parabolic regime of the discriminant classification.
It may be viewed as a degenerate parabolic transport geometry and
has no irreducible classical Coxeter analogue.
\end{remark}

\begin{theorem}[Rank--Two Classification]
\label{thm:rank-two-classification}

Assume $\zeta_i=\zeta_j=-1$. Let $\Theta_{ij}=A_{ij}A_{ji}$, $\Delta = 16\Theta_{ij}(\Theta_{ij}-1)$, 
and let  $C_{ij} = 2\begin{pmatrix} A_{ii} & -A_{ij} \\ -A_{ji} & A_{jj} \end{pmatrix}$. Assume that $\Theta_{ij}\in\mathbb R$. Then the rank--two transport geometry is completely determined by the value of $\Theta_{ij}$:

\begin{center}
\renewcommand{\arraystretch}{1.3}
\begin{tabular}{c|c|c|c|c}
\toprule
$\Theta_{ij}$ & $\det(\mathcal C_{ij})$ & $\Delta$ & Dynamics & Classical analogue \\
\midrule
$0<\Theta_{ij}<1$ & $0<\det(\mathcal C_{ij})<4$ & $\Delta<0$ & Elliptic & Finite \\         
$\Theta_{ij} = 1$   & $\det(\mathcal C_{ij}) = 0 $    & $\Delta=0$ & Parabolic &   Affine \\
$\Theta_{ij} = 0$   & $\det(\mathcal C_{ij}) =4$    & $\Delta=0$ & Parabolic & - \\
$\Theta_{ij}>1$, or $\Theta_{ij} < 0$    & $\det(\mathcal C_{ij})<0$, or $\det(\mathcal C_{ij})>4$    & $\Delta>0$ & Hyperbolic & Indefinite \\
\bottomrule
\end{tabular}
\end{center}

\end{theorem}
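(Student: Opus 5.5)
The plan is to reduce every column of the table to an elementary function of the single real number $\Theta_{ij}$, and then read off the rows by sign analysis. First, specialize the Rank--Two Characteristic Polynomial (\Cref{thm:rank-two-characteristic-polynomial}) to $\zeta_i=\zeta_j=-1$. Then $(1-\zeta_i)(1-\zeta_j)=4$ and $\zeta_i+\zeta_j=-2$, so
\[
p_{ij}(t)=t^2-(4\Theta_{ij}-2)t+1 .
\]
Its discriminant is $\Delta=16\Theta_{ij}(\Theta_{ij}-1)$, as recorded in \Cref{eq:discriminant-formula}. Independently, \Cref{eq:cartan-determinant} gives $\det(\mathcal C_{ij})=4(1-\Theta_{ij})$; this uses $A_{ii}=A_{jj}=1$, which follows from the normalization $\alpha_i(\nu_i)=1$. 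Since $\overline M_{ij}$ is determined up to its characteristic polynomial (and, off the repeated-root locus, up to conjugacy by \Cref{cor:conjugacy-reconstruction}), every entry of the table is a function of $\Theta_{ij}$ alone. This justifies the phrase ``completely determined.''

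Second, we go row by row. The map $\Theta\mapsto 4(1-\Theta)$ is a decreasing affine bijection $\mathbb R\to\mathbb R$. It sends $(0,1)$ to $(0,4)$, $1$ to $0$, $0$ to $4$, $(1,\infty)$ to $(-\infty,0)$, and $(-\infty,0)$ to $(4,\infty)$, which gives the determinant column. The sign of $\Delta=16\Theta(\Theta-1)$ is negative exactly on $(0,1)$, zero exactly at $\Theta\in\{0,1\}$, and positive on the complement. The ``classical analogue'' column is the finite/affine/indefinite definition of Rank--Two Transport Type, together with the remark identifying it with $A^{\mathrm{Cartan}}_{ij}A^{\mathrm{Cartan}}_{ji}<4$, $=4$, $>4$. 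The row $\Theta_{ij}=0$ has no analogue, by the remark on degenerate parabolic geometry.

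Third, we justify the dynamics labels from the reciprocal structure: $\det\overline M_{ij}=1$ and $p_{ij}$ has real coefficients. If $\Delta<0$, the roots $\lambda,\bar\lambda$ are non-real conjugates with $\lambda\bar\lambda=1$. Hence $|\lambda|=1$, $\lambda\neq\pm1$, the operator is diagonalizable, and it acts as a rotation (elliptic). One can write $\lambda=e^{\pm i\phi}$ with $2\cos\phi=4\Theta_{ij}-2$, so $\Theta_{ij}=\cos^2(\phi/2)$, consistent with \Cref{cor:classical-coxeter-recovery}. If $\Delta>0$, the roots are distinct reals $\lambda,\lambda^{-1}$ with $|\lambda|\neq1$ (hyperbolic). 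If $\Delta=0$, there is a repeated root: $t=1$ when $\Theta=1$ and $t=-1$ when $\Theta=0$ (parabolic).

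The main obstacle is the parabolic rows. The word ``parabolic'' suggests a nontrivial Jordan block, yet $\Delta=0$ alone does not exclude $\overline M_{ij}=\pm I$. To settle this, I would compute $\overline M_{ij}$ explicitly from the matrices $\overline R_i,\overline R_j$ in the proof of \Cref{thm:rank-two-characteristic-polynomial}:
\[
\overline M_{ij}=\begin{pmatrix}-1+4\Theta_{ij} & 2A_{ji}\\ -2A_{ij} & -1\end{pmatrix}.
\]
At $\Theta_{ij}=1$, the diagonal entries are $3$ and $-1$, so the matrix is not $I$ and is therefore a unipotent Jordan block. At $\Theta_{ij}=0$, the matrix equals $-I$ precisely when $A_{ij}=A_{ji}=0$; otherwise it is $-1$ times a Jordan block. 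I would state this degenerate sub-case explicitly, interpreting ``parabolic'' there in the discriminant sense adopted in the preceding remark, so the table remains correct as stated.
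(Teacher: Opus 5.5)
Your main line is the paper's own proof. The table is read off from $\det(\mathcal C_{ij})=4(1-\Theta_{ij})$ and $\Delta=16\Theta_{ij}(\Theta_{ij}-1)$ by sign analysis, and your reciprocal-eigenvalue discussion spells out the discriminant remark that the paper cites for the dynamics labels. Your observation about $\Theta_{ij}=0$ is a correct refinement the paper does not make: there one can have $\overline M_{ij}=-I$, so $\Theta_{ij}$ fixes the characteristic polynomial but not the conjugacy class. This sub-case, $A_{ij}=A_{ji}=0$, is exactly the classical commuting pair with $m_{ij}=2$.

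The step you flag as the main obstacle is not justified as written.

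\begin{itemize}
\item You compute $\overline M_{ij}$ in the basis $([\nu_i],[\nu_j])$ of $T_xX/F_{ij}$, but this fails to be a basis precisely when $\Theta_{ij}=1$.
\item Transversality makes $v\mapsto(\alpha_i(v),\alpha_j(v))$ an isomorphism $T_xX/F_{ij}\cong k^2$. Under it, $[\nu_i]\mapsto(1,A_{ij})$ and $[\nu_j]\mapsto(A_{ji},1)$, and these are dependent if and only if $1-\Theta_{ij}=0$. In \Cref{ex:transport-atlas-z2} at $z=\pm1$ one indeed has $\nu_2=\pm\nu_1$.
\item So at $\Theta_{ij}=1$ your matrix with diagonal entries $3,-1$ is only a formal matrix, and it does not show $\overline M_{ij}\neq I$.
\end{itemize}

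The fix is to work in the coordinates $(\alpha_i,\alpha_j)$. From $R_i(v)=v-2\alpha_i(v)\nu_i$ and $R_j(v)=v-2\alpha_j(v)\nu_j$ one gets
\[
\overline R_i=\begin{pmatrix}-1&0\\-2A_{ij}&1\end{pmatrix},\qquad
\overline R_j=\begin{pmatrix}1&-2A_{ji}\\0&-1\end{pmatrix},\qquad
\overline M_{ij}=\begin{pmatrix}-1&2A_{ji}\\-2A_{ij}&4\Theta_{ij}-1\end{pmatrix},
\]
valid for every value of $\Theta_{ij}$. This has the same trace and determinant, so the characteristic polynomial you cite also holds at $\Theta_{ij}=1$. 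The paper's own derivation of that polynomial uses the same degenerate basis, so this is worth noting. The matrix gives $\overline M_{ij}\neq I$ at $\Theta_{ij}=1$ from the diagonal entries $-1,3$. It also gives $\overline M_{ij}=-I$ at $\Theta_{ij}=0$ if and only if $A_{ij}=A_{ji}=0$, so your conclusions survive.
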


\begin{proof}
By \Cref{eq:cartan-determinant},
$\det(\mathcal C_{ij})=4(1-\Theta_{ij})$,
so the sign of $\det(\mathcal C_{ij})$ is determined by whether
$\Theta_{ij}$ is less than, equal to, or greater than $1$.
By the preceding remark,
$\Delta
=
16\Theta_{ij}(\Theta_{ij}-1)$.
Hence
\[
0 < \Theta_{ij}<1 \iff \Delta<0,
\qquad
\Theta_{ij} \in \{0, 1\} \iff \Delta=0,
\qquad
\Theta_{ij}>1,\,\text{or}\, \Theta_{ij}< 0 \iff \Delta>0.
\]
The corresponding elliptic, parabolic, and hyperbolic regimes are
described in the discriminant remark. The table follows.
\end{proof}

\begin{corollary}[Cartan Reconstruction]
\label{cor:Cartan-Reconstruction}
The determinant of the transport Cartan matrix uniquely determines the
normalized transport invariant $\Theta_{ij}$ and hence determines the
characteristic polynomial and spectrum of the reduced transport operator
$\overline M_{ij}$.
\end{corollary}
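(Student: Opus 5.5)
The proof inverts the determinant formula \Cref{eq:cartan-determinant} and then feeds the result into the Rank--Two Characteristic Polynomial (\Cref{thm:rank-two-characteristic-polynomial}). No new geometric input is needed; everything reduces to earlier results.

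First, by \Cref{def:Rank--Two-Transport-Cartan-Matrix}, the normalization $\alpha_i(\nu_i)=\alpha_j(\nu_j)=1$ gives $A_{ii}=A_{jj}=1$. Hence
\[
\det(\mathcal C_{ij})=4-4A_{ij}A_{ji}=4(1-\Theta_{ij}).
\]
Since $k$ has characteristic zero, $4$ is invertible, and we obtain
\[
\Theta_{ij}=1-\tfrac14\det(\mathcal C_{ij}).
\]
So $\Theta_{ij}$ is a function of $\det(\mathcal C_{ij})$ alone. I should check that this is unambiguous. The entries $A_{ij}$ depend on the choice of root data, but by the Transformation Law (\Cref{prop:Transformation-Law}) the off-diagonal entries scale as $\lambda_i\lambda_j^{-1}$ and $\lambda_j\lambda_i^{-1}$, and the diagonal entries are fixed by the normalization. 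Hence $\det(\mathcal C_{ij})$ is itself independent of all choices, and the map $\det(\mathcal C_{ij})\mapsto\Theta_{ij}$ is well defined.

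Next, I apply \Cref{thm:rank-two-characteristic-polynomial} with the eigenvalue phases $\zeta_i,\zeta_j$ regarded as fixed data of the two transport operators. Substituting the expression for $\Theta_{ij}$ gives
\[
p_{ij}(t)=t^2-\Bigl(\zeta_i+\zeta_j+(1-\zeta_i)(1-\zeta_j)\bigl(1-\tfrac14\det\mathcal C_{ij}\bigr)\Bigr)t+\zeta_i\zeta_j .
\]
In the reflection case $\zeta_i=\zeta_j=-1$ this becomes $t^2-(2-\det\mathcal C_{ij})t+1$. The spectrum of $\overline M_{ij}$ is the root multiset of $p_{ij}$, so it is determined as well. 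Away from the repeated-eigenvalue locus, \Cref{cor:conjugacy-reconstruction} further gives the conjugacy class of $\overline M_{ij}$.

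The main obstacle is making precise the sense in which $\det(\mathcal C_{ij})$ determines the polynomial, since $p_{ij}$ also involves $\zeta_i,\zeta_j$. I will state explicitly that the reconstruction is relative to the fixed orders, or phases, of $R_i$ and $R_j$. In the involutive setting of \Cref{thm:rank-two-classification} no extra data is required.
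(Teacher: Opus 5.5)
Your proposal is correct and follows the paper's argument. The paper simply inverts $\det(\mathcal C_{ij})=4(1-\Theta_{ij})$ and then appeals to the Conjugacy Reconstruction corollary, which is itself a direct consequence of the Rank--Two Characteristic Polynomial theorem that you cite directly; your explicit remark that the reconstruction is relative to fixed eigenvalue phases $\zeta_i,\zeta_j$ (automatic in the involutive case) is a precision the paper leaves implicit.
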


\begin{proof}
Since
$\det(\mathcal C_{ij})
=
4(1-\Theta_{ij})$,
the determinant uniquely determines $\Theta_{ij}$. Hence,
the conclusion follows from 
Corollary~\ref{cor:conjugacy-reconstruction}.
\end{proof}

\subsubsection{Finite-Order Spectral Classification}
\label{subsec:Finite-Order-Spectral-Classification}

\begin{theorem}[Finite-Order Spectral Classification]
\label{thm:Finite-Order-Spectral Classification}
Let $D_i, D_j \in \Phi_Q$ be coincidence components satisfying the hypotheses of the Rank-Two Reduction Theorem. Assume that $\zeta_i=\zeta_j=-1$ and that the reduced transport operator $\overline M_{ij}$ has finite order $m_{ij}=\operatorname{ord}(\overline M_{ij}) < \infty$. Then the eigenvalues of $\overline M_{ij}$ are $e^{\pm 2\pi i k/ m_{ij}}$ for some integer $k$ satisfying $1 \le k < m_{ij}$ and $\gcd(k, m_{ij})=1$, and the normalized transport invariant satisfies:
\begin{equation}
    \Theta_{ij} = \cos^2\left(\frac{\pi k}{m_{ij}}\right).
\end{equation}
\end{theorem}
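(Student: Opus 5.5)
Setting $\zeta_i=\zeta_j=-1$ in the Rank--Two Characteristic Polynomial (\Cref{thm:rank-two-characteristic-polynomial}) gives
\[
p_{ij}(t)=t^2-(4\Theta_{ij}-2)\,t+1,
\]
so $\det(\overline M_{ij})=1$ and $\operatorname{tr}(\overline M_{ij})=4\Theta_{ij}-2$. The plan is to pin down the spectrum of $\overline M_{ij}$ from the finite-order hypothesis and then solve for $\Theta_{ij}$ through the trace.

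First, the spectrum. Since $\overline M_{ij}^{m_{ij}}=I$, the minimal polynomial of $\overline M_{ij}$ divides $t^{m_{ij}}-1$. In characteristic zero this polynomial is separable, so $\overline M_{ij}$ is diagonalizable and its eigenvalues $\lambda_1,\lambda_2$ are $m_{ij}$-th roots of unity. Because $\lambda_1\lambda_2=\det(\overline M_{ij})=1$, we get $\lambda_2=\lambda_1^{-1}$. Write $\lambda_1=e^{2\pi i k/m_{ij}}$ with $0\le k<m_{ij}$; then the spectrum is $\{e^{\pm 2\pi i k/m_{ij}}\}$. For a diagonalizable operator, the order equals the least common multiple of the orders of its eigenvalues. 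Here both eigenvalues have the same order $m_{ij}/\gcd(k,m_{ij})$. Since the order is exactly $m_{ij}$, this forces $\gcd(k,m_{ij})=1$.

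Second, the trace. We have
\[
4\Theta_{ij}-2=\lambda_1+\lambda_1^{-1}=2\cos(2\pi k/m_{ij}).
\]
Using the double-angle identity $2\cos(2\theta)=4\cos^2\theta-2$, this gives $\Theta_{ij}=\cos^2(\pi k/m_{ij})$.

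The main obstacle is the range $1\le k<m_{ij}$. The case $k=0$ occurs only when $m_{ij}=1$, i.e.\ $\overline M_{ij}=I$, which would give $\Theta_{ij}=1$; then $\gcd(0,1)=1$ but $k\ge1$ fails. I would handle this by noting that $\overline M_{ij}=I$ is impossible under the hypotheses. In the basis $([\nu_i],[\nu_j])$ from the proof of \Cref{thm:rank-two-characteristic-polynomial}, $\overline R_i$ has $(1,1)$-entry $\zeta_i=-1$. Multiplying, the $(1,1)$-entry of $\overline M_{ij}=\overline R_i\overline R_j$ is $-1+(\zeta_i-1)(\zeta_j-1)A_{ji}A_{ij}=-1+4\Theta_{ij}$. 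If $\overline M_{ij}=I$, this entry equals $1$, so $\Theta_{ij}=1/2$. But then the trace would be $4\cdot\tfrac12-2=0\neq 2$, a contradiction. Hence $m_{ij}\ge 2$, and any valid exponent satisfies $1\le k<m_{ij}$; replacing $k$ by $m_{ij}-k$ if desired does not change the eigenvalue pair or $\Theta_{ij}$.
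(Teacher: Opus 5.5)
Your proposal is correct and follows essentially the same route as the paper: $\det(\overline M_{ij})=\zeta_i\zeta_j=1$ forces a reciprocal pair of roots of unity, and comparing $\operatorname{tr}(\overline M_{ij})=4\Theta_{ij}-2$ with $2\cos(2\pi k/m_{ij})$ gives $\Theta_{ij}=\cos^2(\pi k/m_{ij})$. Your extra steps fill in points the paper's proof asserts without argument or omits: diagonalizability, deriving $\gcd(k,m_{ij})=1$ from the exact order, and excluding $\overline M_{ij}=I$ to secure $k\ge 1$. That exclusion is even quicker than your argument, since the $(2,2)$ entry of $\overline R_i\overline R_j$ is $\zeta_j=-1$.
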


\begin{proof}
Since $\zeta_i=\zeta_j=-1$, the Rank-Two Determinant Formula (\Cref{eq:cartan-determinant}) gives $\det(\overline M_{ij})=1$. Since $\overline M_{ij}$ has finite order $m_{ij}$, its eigenvalues are roots of unity $\lambda, \lambda^{-1}$, hence $\lambda = e^{2\pi i k/m_{ij}}$ for some integer $k$ coprime to $m_{ij}$. Therefore, $\operatorname{tr}(\overline M_{ij}) = 2\cos(2\pi k/m_{ij})$. On the other hand, the Rank-Two Trace Formula yields $\operatorname{tr}(\overline M_{ij}) = -2 + 4\Theta_{ij}$. Comparing these expressions gives $-2 + 4\Theta_{ij} = 2\cos(2\pi k/m_{ij})$. From which, we obtain $\Theta_{ij} = \cos^2(\pi k/m_{ij})$.
\end{proof}

\begin{corollary}[Primitive Transport Type]
\label{cor:Primitive-Transport-Type}
Under the hypotheses of the preceding theorem, suppose that $\operatorname{Spec}(\overline M_{ij}) = \{e^{2\pi i/m_{ij}}, e^{-2\pi i/m_{ij}}\}$. Then $\Theta_{ij} = \cos^2(\pi/m_{ij})$.
\end{corollary}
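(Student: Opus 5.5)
This corollary is the case $k=1$ of the Finite-Order Spectral Classification (\Cref{thm:Finite-Order-Spectral Classification}). The plan is to avoid re-deriving that theorem and instead read the value of $\Theta_{ij}$ directly off the trace of $\overline M_{ij}$. The hypothesis fixes the spectrum as $\{e^{2\pi i/m_{ij}},e^{-2\pi i/m_{ij}}\}$, so no choice of $k$ is needed.

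First, since $\overline M_{ij}$ acts on the two-dimensional space $T_xX/F_{ij}$, its trace is the sum of its two eigenvalues. Hence
\[
\operatorname{tr}(\overline M_{ij}) = e^{2\pi i/m_{ij}}+e^{-2\pi i/m_{ij}} = 2\cos\!\left(\frac{2\pi}{m_{ij}}\right).
\]

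Second, the Rank--Two Characteristic Polynomial (\Cref{thm:rank-two-characteristic-polynomial}) with $\zeta_i=\zeta_j=-1$ gives
\[
\operatorname{tr}(\overline M_{ij}) = -2+4\Theta_{ij}.
\]
Equating the two expressions for the trace yields
\[
4\Theta_{ij} = 2+2\cos\!\left(\frac{2\pi}{m_{ij}}\right).
\]
The half-angle identity $1+\cos 2\theta = 2\cos^2\theta$ with $\theta=\pi/m_{ij}$ then gives $\Theta_{ij}=\cos^2(\pi/m_{ij})$.

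There is no real obstacle; the only care needed is consistency. The stated spectrum is compatible with the constraints on $\overline M_{ij}$: its determinant $\zeta_i\zeta_j=1$ equals the product of the two eigenvalues, and its order is $m_{ij}$. So the hypothesis is exactly the case $k=1$ of the preceding theorem, and the argument can also be phrased as a direct specialization of that theorem.
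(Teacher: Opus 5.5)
Your proof is correct and matches the paper's argument: the paper gives no separate proof and treats this as the case $k=1$ of the Finite-Order Spectral Classification theorem, whose proof equates $\operatorname{tr}(\overline M_{ij})=2\cos(2\pi k/m_{ij})$ with $-2+4\Theta_{ij}$ from the Rank--Two Characteristic Polynomial and applies the half-angle identity, exactly as you do. Your closing remarks about the determinant and the order are only consistency checks; the computation itself uses nothing beyond the prescribed spectrum (read with multiplicity) and the trace formula.
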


The Primitive Transport Type Corollary yields the following finite rank-two transport geometries:

\begin{center}
\renewcommand{\arraystretch}{1.3} 
\setlength{\tabcolsep}{12pt}     
\begin{tabular}{r ccccccc}
\toprule
$m_{ij}$ & $2$ & $3$ & $4$ & $5$ & $6$ & $7$ & $8$ \\ 
\midrule
$\Theta_{ij}$ & $0$ & $\frac{1}{4}$ & $\frac{1}{2}$ & $\frac{3+\sqrt{5}}{8}$ & $\frac{3}{4}$ & $\cos^2(\pi/7)$ & $\frac{2+\sqrt{2}}{4}$ \\
\bottomrule
\end{tabular}
\end{center}

The values $m_{ij} = 2,3,4,6$ recover the classical crystallographic possibilities, while $m_{ij} = 5,7,8,\dots$ suggest the existence of genuinely non-crystallographic transport geometries.

\begin{remark}[The Pentagonal Transport Type]
 The first non-crystallographic transport geometry occurs for $m_{ij} =5$. In this case,
\[
\Theta_{ij} = \cos^2\left(\frac{\pi}{5}\right) = \frac{3+\sqrt{5}}{8}.
\]
This value does not occur among the classical crystallographic rank-two types and therefore represents a genuinely new transport configuration.
   
\end{remark}

\begin{corollary}[Finite-Order Transport Geometry]
\label{cor:Finite-Order-Transport-Geometry}
Every finite-order involutive transport geometry is either elliptic or parabolic. More precisely, if $\Theta_{ij} = \cos^2(\pi k/m)$, then $0 \le \Theta_{ij} \le 1$, and therefore $\Delta = 16\Theta_{ij}(\Theta_{ij}-1) \le 0$.
\end{corollary}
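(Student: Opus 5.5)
The statement is essentially a consequence of the Finite-Order Spectral Classification (\Cref{thm:Finite-Order-Spectral Classification}) combined with the discriminant formula \Cref{eq:discriminant-formula}. The plan is to identify the finite-order involutive situation with the hypotheses of that theorem, obtain $\Theta_{ij}=\cos^2(\pi k/m)$, bound it, and then read off the sign of $\Delta$.

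\textbf{Step 1 (bounding $\Theta_{ij}$).} Here $\zeta_i=\zeta_j=-1$ and $\overline M_{ij}$ has finite order $m$. \Cref{thm:Finite-Order-Spectral Classification} gives $\Theta_{ij}=\cos^2(\pi k/m)$ for an integer $k$. Since $\pi k/m$ is a real number, its cosine lies in $[-1,1]$. Hence $\Theta_{ij}$ is a real number with
\[
0\le \Theta_{ij}\le 1 .
\]
In particular the standing assumption $\Theta_{ij}\in\mathbb R$ of the classification theorem holds automatically, so \Cref{thm:rank-two-classification} applies.

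\textbf{Step 2 (sign of the discriminant).} In the reflection case the discriminant is $\Delta=16\Theta_{ij}(\Theta_{ij}-1)$ by \Cref{eq:discriminant-formula}. The factor $\Theta_{ij}$ is nonnegative and the factor $\Theta_{ij}-1$ is nonpositive, so their product satisfies $\Delta\le 0$.

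\textbf{Step 3 (reading off the type).} By \Cref{thm:rank-two-classification}:
\begin{itemize}
\item $\Delta<0$ means the geometry is elliptic;
\item $\Delta=0$, i.e.\ $\Theta_{ij}\in\{0,1\}$, means it is parabolic.
\end{itemize}
Since $\Delta>0$ cannot occur, the hyperbolic case is excluded. This is exactly the claimed dichotomy.

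\textbf{Possible subtlety.} The boundary values deserve a remark. The value $\Theta_{ij}=0$ occurs for $m=2$ and $k=1$. The value $\Theta_{ij}=1$ would require $k/m\in\mathbb Z$, which the coprimality condition $1\le k<m$ forbids unless $m=1$. In that case $\overline M_{ij}=I$, and the repeated eigenvalue $1$ is consistent with $\Theta_{ij}=1$ through the trace formula $\operatorname{tr}\overline M_{ij}=4\Theta_{ij}-2$. Either way the inequality $0\le\Theta_{ij}\le1$ holds, so no case needs separate treatment. The only other point to state explicitly is that, as in \Cref{thm:Finite-Order-Spectral Classification}, the eigenvalues are unimodular and $\Theta_{ij}$ is real, so no complex values of $\Theta_{ij}$ need to be considered.
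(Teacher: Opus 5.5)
Your proposal is correct and follows the paper's proof: the paper observes that $0\le\cos^2(\pi k/m)\le 1$ and reads off $\Delta=16\Theta_{ij}(\Theta_{ij}-1)\le 0$ from the discriminant formula, which is your Steps 1--3. Your $m=1$ aside is harmless but describes a case that cannot occur, since $\overline M_{ij}=I$ would force the involution $\overline R_i$ to equal $\overline R_j$, contradicting their explicit matrices; likewise $\Theta_{ij}=1$ would make $\overline M_{ij}$ a nontrivial unipotent of infinite order.
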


\begin{proof}
Since $0 \le \cos^2(\pi k/m) \le 1$, the result follows immediately from the discriminant formula (\Cref{eq:discriminant-formula}).
\end{proof}

\begin{remark}[Nonlinear Cartan Fields]
\label{remark:Nonlinear-Cartan-Fields}
In contrast with classical root systems, the transport Cartan coefficients
$A_{ij}(x) = \alpha_j(\nu_i)$
may vary with the point $x \in H_{D_i} \cap H_{D_j}$. Hence, one obtains a field of Cartan matrices
$\mathcal{C}_Q(x) = \bigl( c_{ij}(x) \bigr)$,
rather than a single constant matrix. For this reason, transport geometry naturally gives rise to a nonlinear Cartan theory.
\end{remark}

\subsection{Transport Graphs}
\label{subsec:Transport-Graphs}

The rank--two invariants associated to a coincidence component system
may be organized into graph-theoretic structures analogous to the
classical Dynkin and Coxeter diagrams. The transport Dynkin graph
records the intrinsic rank--two geometry through the invariants
$\Theta_{ij}$, while the transport Coxeter graph records finite-order
relations whenever the corresponding transport products have finite
order.

\subsubsection{Transport Dynkin Graphs}
\label{subsubsec:Transport-Dynkin-Graphs}
Since the normalized transport invariants
$\Theta_{ij}=A_{ij}A_{ji}$
are independent of the choice of transport root data, they provide a
canonical labeling of pairwise interactions.
\begin{definition}[Transport Dynkin Graph]
\label{def:Transport-Dynkin-Graph}
Let
$\Phi_Q=\{D_1,\dots,D_N\}$
be a collection of coincidence components.
The associated \emph{transport Dynkin graph}
$\Gamma_Q^{\mathrm{Dyn}}$
is defined as follows:
\begin{enumerate}[label=\emph{(\roman*)}]
    \item The vertices are the coincidence components
    $D_1,\dots,D_N$.
    \item Two distinct vertices $D_i$ and $D_j$ are joined by an edge
    whenever
    $\Theta_{ij}\neq 0$.
    \item The edge joining $D_i$ and $D_j$ is labelled by the
    transport invariant
   $ \Theta_{ij}$.
\end{enumerate}
\end{definition}
The transport Dynkin graph records the intrinsic rank--two transport
geometry associated to the coincidence component system.

\begin{theorem}[Dynkin Reconstruction]
\label{thm:dynkin-reconstruction}
The transport Dynkin graph determines the local rank--two transport
dynamics. More precisely, the edge labels $\Theta_{ij}$ determine:
\begin{enumerate}[label=\emph{(\roman*)}]
    \item the determinant
  $  \det(\mathcal C_{ij})
    =
    4(1-\Theta_{ij})$; 
    \item the characteristic polynomial
  $  p_{ij}(t)$
    of the reduced transport operator
    $\overline M_{ij}$;
    \item the spectrum of
    $\overline M_{ij}$.
\end{enumerate}
\end{theorem}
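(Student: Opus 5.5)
The proof will be a direct assembly of earlier results, taking each item in turn. Throughout I will read the edge label of the transport Dynkin graph as the value $\Theta_{ij}$, with the convention that a missing edge means $\Theta_{ij}=0$. Under that convention the graph supplies $\Theta_{ij}$ for every pair $(D_i,D_j)$. I will also work under the hypotheses of the Rank--Two Reduction Theorem (\Cref{thm:Rank-Two-Reduction-Theorem}) together with the standing finite-order, regular-point assumptions, so the phases $\zeta_i,\zeta_j$ are fixed data attached to the vertices. In the involutive setting that governs the classification, $\zeta_i=\zeta_j=-1$.

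For (i), I use the normalization $A_{ii}=A_{jj}=1$ in \Cref{def:Rank--Two-Transport-Cartan-Matrix} and compute the $2\times 2$ determinant:
\[
\det(\mathcal C_{ij})=4-4A_{ij}A_{ji}=4(1-\Theta_{ij}).
\]
This is \Cref{eq:cartan-determinant}. The individual $A_{ij}$ depend on choices, but by \Cref{prop:Transformation-Law} $\Theta_{ij}$ does not. The determinant is therefore intrinsic, and it is a function of the label alone.

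For (ii), I invoke \Cref{thm:rank-two-characteristic-polynomial}. It shows that $p_{ij}(t)$ depends only on $\zeta_i$, $\zeta_j$ and $\Theta_{ij}$, through the trace $\zeta_i+\zeta_j+(1-\zeta_i)(1-\zeta_j)\Theta_{ij}$ and the determinant $\zeta_i\zeta_j$. In the reflection case this becomes
\[
p_{ij}(t)=t^2-(4\Theta_{ij}-2)t+1,
\]
which is determined by the label alone.

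For (iii), the spectrum of $\overline M_{ij}$ is the root multiset of $p_{ij}$, so it follows immediately from (ii). Explicitly, the eigenvalues are
\[
\tfrac12\Bigl((4\Theta_{ij}-2)\pm\sqrt{16\Theta_{ij}(\Theta_{ij}-1)}\Bigr).
\]
For real $\Theta_{ij}$, \Cref{thm:rank-two-classification} then gives the dynamical type: elliptic, parabolic or hyperbolic. Away from the repeated-eigenvalue locus, I would also cite \Cref{cor:conjugacy-reconstruction} to obtain $\overline M_{ij}$ up to conjugacy.

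The only delicate point is what "the graph determines" means for non-edges and for non-involutive phases. I will resolve it in two steps. First, I recall that unlabeled pairs carry $\Theta_{ij}=0$, which yields $p_{ij}(t)=(t+1)^2$ in the reflection case. Second, I state explicitly that for general $\zeta_i$ the determination holds once the vertex phases are recorded. Everything else is routine substitution.
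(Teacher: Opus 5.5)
Your proposal is correct and matches the paper's proof: (i) is the Cartan determinant formula \Cref{eq:cartan-determinant}, and (ii)--(iii) follow from \Cref{thm:rank-two-characteristic-polynomial}, with the spectrum read off from $p_{ij}(t)$. Your two caveats are accurate refinements that the paper's two-line proof leaves implicit: a missing edge carries $\Theta_{ij}=0$, and outside the involutive case the phases $\zeta_i,\zeta_j$ must be recorded, since the transport Dynkin graph as defined does not store them.
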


\begin{proof}
By the Cartan determinant formula (\Cref{eq:cartan-determinant})
$\det(\mathcal C_{ij})
=
4(1-\Theta_{ij})$.
The Rank--Two Characteristic Polynomial Theorem (\Cref{thm:rank-two-characteristic-polynomial}) shows that
$\Theta_{ij}$ determines the characteristic polynomial of
$\overline M_{ij}$, and hence its spectrum.
\end{proof}

\begin{corollary}
The transport Dynkin graph determines the local dynamical type of every
rank--two transport subsystem. In particular, it determines whether the
subsystem is elliptic, parabolic, or hyperbolic.
\end{corollary}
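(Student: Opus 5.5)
The corollary follows from the Dynkin Reconstruction Theorem (\Cref{thm:dynkin-reconstruction}) combined with the Rank--Two Classification (\Cref{thm:rank-two-classification}). Fix a pair of vertices $D_i,D_j$ of $\Gamma_Q^{\mathrm{Dyn}}$ whose root walls meet at a point $x$ satisfying the hypotheses of the Rank--Two Reduction Theorem. The first step is to read off $\Theta_{ij}$ from the graph: if $D_i$ and $D_j$ are joined by an edge, $\Theta_{ij}$ is its label; if they are not joined, then by \Cref{def:Transport-Dynkin-Graph} necessarily $\Theta_{ij}=0$. In both cases the graph determines $\Theta_{ij}$.

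Next, by \Cref{thm:dynkin-reconstruction}, $\Theta_{ij}$ determines the characteristic polynomial $p_{ij}(t)$ of $\overline M_{ij}$ and hence its spectrum. In the reflection setting $\zeta_i=\zeta_j=-1$ I would make this explicit: $p_{ij}(t)=t^2-(4\Theta_{ij}-2)t+1$, whose discriminant is $\Delta=16\Theta_{ij}(\Theta_{ij}-1)$ by \Cref{eq:discriminant-formula}. For real $\Theta_{ij}$, the sign of $\Delta$ is a function of $\Theta_{ij}$ alone. \Cref{thm:rank-two-classification} then assigns the type:
\begin{itemize}
\item elliptic for $0<\Theta_{ij}<1$;
\item parabolic for $\Theta_{ij}\in\{0,1\}$;
\item hyperbolic for $\Theta_{ij}<0$ or $\Theta_{ij}>1$.
\end{itemize}
The non-edge case $\Theta_{ij}=0$ lands in the degenerate parabolic row, so it is covered as well.

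The main point needing care is the scope of the statement. The elliptic/parabolic/hyperbolic trichotomy was defined only for real $\Theta_{ij}$ and $\zeta_i=\zeta_j=-1$, so the corollary should be read under those standing hypotheses, and the proof should state this explicitly. For general $\zeta_i,\zeta_j$, \Cref{thm:rank-two-characteristic-polynomial} shows that the graph label still determines the spectrum, and therefore whatever spectral type is attached to it. However, the phases $\zeta_i,\zeta_j$ must be known in addition to $\Theta_{ij}$. I would note this as a remark rather than try to extend the trichotomy beyond the reflection case.
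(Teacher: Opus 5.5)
Your proof is correct and is essentially the paper's own argument: the paper invokes the Rank--Two Classification (\Cref{thm:rank-two-classification}) to note that the type is a function of $\Theta_{ij}$, and your explicit handling of the non-edge case ($\Theta_{ij}=0$) and of the standing hypotheses $\zeta_i=\zeta_j=-1$, $\Theta_{ij}\in\mathbb{R}$ are useful clarifications that the paper leaves implicit. One nuance: at $\Theta_{ij}=0$ the label ``parabolic'' is only the discriminant convention of the table, because $\overline M_{ij}=-I$ when $A_{ij}=A_{ji}=0$ but $\overline M_{ij}$ is a nontrivial Jordan block when exactly one of them vanishes, so there the graph fixes the type in the table's sense but not the conjugacy class.
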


\begin{proof}
By the Rank--Two Classification Theorem (\Cref{thm:rank-two-classification}), the dynamical type is
determined entirely by the value of $\Theta_{ij}$.
\end{proof}

\subsubsection{Transport Coxeter Graphs}
\label{subsubsec:Transport-Coxeter-Graphs}
When the transport product
$M_{ij}=R_iR_j$
has finite order, one may also record the corresponding finite-order
data.

\begin{definition}[Transport Coxeter Graph]
\label{def:Transport-Coxeter-Graph}
Let
$\Phi_Q=\{D_1,\dots,D_N\}$
be a collection of coincidence components.
The associated \emph{transport Coxeter graph}
$\Gamma_Q^{\mathrm{Cox}}$
is defined as follows:
\begin{enumerate}[label=\emph{(\roman*)}]
    \item The vertices are the coincidence components
    $D_1,\dots,D_N$.
    \item Each vertex $D_i$ is labelled by the transport order
   $ r_i=\operatorname{ord}(R_i)$,
    whenever this order is finite.
    \item Two distinct vertices $D_i$ and $D_j$ are joined by an edge
    whenever the reduced transport operator
    $\overline M_{ij}$
    has finite order.
    \item The edge joining $D_i$ and $D_j$ is labelled by
   $ m_{ij}
    =
    \operatorname{ord}(\overline M_{ij})$.
\end{enumerate}
\end{definition}

The transport Coxeter graph records finite-order transport relations
between coincidence components.

\begin{proposition}
\label{prop:transport-Coxeter-graph}
Let $D_i$ and $D_j$ be joined by an edge of
$\Gamma_Q^{\mathrm{Cox}}$.
If
$\operatorname{Spec}(\overline M_{ij})
=
\{\lambda,\mu\}$,
then
\begin{equation}
    \Theta_{ij}
=
\frac{
\lambda+\mu-(\zeta_i+\zeta_j)
}{
(1-\zeta_i)(1-\zeta_j)
}.
\end{equation}
\end{proposition}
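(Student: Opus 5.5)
The plan is to read the formula directly off the Rank--Two Characteristic Polynomial (\Cref{thm:rank-two-characteristic-polynomial}). An edge of $\Gamma_Q^{\mathrm{Cox}}$ between $D_i$ and $D_j$ presupposes that $\overline M_{ij}$ is defined, so the hypotheses of the Rank--Two Reduction Theorem are in force at the point $x$ under consideration. In particular:
\begin{itemize}
\item $x$ is a transverse intersection point of the two root walls;
\item $x$ is a regular transport root point for both components;
\item the nontrivial eigenvalues $\zeta_i,\zeta_j$ of $R_i,R_j$ on $\operatorname{Im}(I-R_i)$ and $\operatorname{Im}(I-R_j)$ are defined.
\end{itemize}
I would also make explicit the finite-order (semisimplicity) assumption on $R_i,R_j$ used in \Cref{lem:rank-one-reflection-formula}. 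That lemma underlies the matrix computation in the proof of \Cref{thm:rank-two-characteristic-polynomial}, which is therefore available.

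The argument then proceeds in three steps. First, since $\overline M_{ij}$ acts on the two-dimensional space $T_xX/F_{ij}$ and $\operatorname{Spec}(\overline M_{ij})=\{\lambda,\mu\}$, I would note that $\lambda,\mu$ are the two roots, with multiplicity, of the characteristic polynomial $p_{ij}(t)$. Hence $\operatorname{tr}(\overline M_{ij})=\lambda+\mu$. Second, I would compare this with the trace computed in \Cref{thm:rank-two-characteristic-polynomial}, namely
\[
\operatorname{tr}(\overline M_{ij})=\zeta_i+\zeta_j+(1-\zeta_i)(1-\zeta_j)\Theta_{ij}.
\]
This gives $\lambda+\mu-(\zeta_i+\zeta_j)=(1-\zeta_i)(1-\zeta_j)\Theta_{ij}$. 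Third, because $\zeta_i\neq 1$ and $\zeta_j\neq 1$ (they are the \emph{nontrivial} eigenvalues at a regular transport root point), the factor $(1-\zeta_i)(1-\zeta_j)$ is nonzero and can be divided out, which yields the stated formula. As a consistency check, the constant term gives $\lambda\mu=\zeta_i\zeta_j$. In the reflection case $\zeta_i=\zeta_j=-1$, the formula reduces to $\Theta_{ij}=(\lambda+\mu+2)/4$, in agreement with \Cref{thm:Finite-Order-Spectral Classification}.

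The main obstacle is interpretive rather than computational. The spectrum is written as a set $\{\lambda,\mu\}$, so in the repeated-eigenvalue (parabolic) case one must read it as a multiset; otherwise $\lambda+\mu$ would not equal the trace. I would state explicitly that $\lambda,\mu$ denote the eigenvalues counted with algebraic multiplicity, and then no case split is needed. The finite-order edge condition itself plays no role in the identity. It only guarantees that $\lambda,\mu$ are roots of unity, of order dividing $m_{ij}=\operatorname{ord}(\overline M_{ij})$.
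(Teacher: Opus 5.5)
Your proposal is correct and follows the paper's proof: set $\operatorname{tr}(\overline M_{ij})=\lambda+\mu$ equal to the trace formula $\zeta_i+\zeta_j+(1-\zeta_i)(1-\zeta_j)\Theta_{ij}$ from the Rank--Two Characteristic Polynomial theorem, then solve for $\Theta_{ij}$. You also make explicit two points the paper leaves implicit: the spectrum must be read with algebraic multiplicity, and dividing is legitimate because $(1-\zeta_i)(1-\zeta_j)\neq 0$ when $\zeta_i,\zeta_j\neq 1$.
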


\begin{proof}
By the Rank--Two Characteristic Polynomial Theorem,
\[
\operatorname{tr}(\overline M_{ij})
=
\zeta_i+\zeta_j
+
(1-\zeta_i)(1-\zeta_j)\Theta_{ij}.
\]
Since
$\operatorname{tr}(\overline M_{ij})
=
\lambda+\mu$,
solving for $\Theta_{ij}$ yields the stated formula.
\end{proof}

\begin{corollary}
The transport Coxeter graph determines the possible finite-order
rank--two transport invariants associated to each edge.
\end{corollary}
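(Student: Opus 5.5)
The plan is to combine \Cref{prop:transport-Coxeter-graph} with the finite-order constraint on the spectrum. Fix an edge of $\Gamma_Q^{\mathrm{Cox}}$ joining $D_i$ and $D_j$, with label $m_{ij}=\operatorname{ord}(\overline M_{ij})<\infty$. Since $\overline M_{ij}^{\,m_{ij}}=I$, each eigenvalue of $\overline M_{ij}$ is a root of unity whose order divides $m_{ij}$. Moreover the least common multiple of the two eigenvalue orders must equal $m_{ij}$, because in characteristic zero a finite-order operator is diagonalizable. So $\operatorname{Spec}(\overline M_{ij})=\{\lambda,\mu\}$ with $\lambda=e^{2\pi i a/m_{ij}}$ and $\mu=e^{2\pi i b/m_{ij}}$, for integers $a,b$ with $\operatorname{lcm}(\operatorname{ord}\lambda,\operatorname{ord}\mu)=m_{ij}$; this is a finite set of possibilities.

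Next we use the determinant constraint from the proof of \Cref{thm:rank-two-characteristic-polynomial}, namely $\lambda\mu=\det(\overline M_{ij})=\zeta_i\zeta_j$. When the vertex labels $r_i=\operatorname{ord}(R_i)$ and $r_j=\operatorname{ord}(R_j)$ are recorded, $\zeta_i$ and $\zeta_j$ range over the finitely many primitive $r_i$-th and $r_j$-th roots of unity, so this condition cuts down the admissible pairs $(\lambda,\mu)$. Substituting each admissible triple $(\zeta_i,\zeta_j,\{\lambda,\mu\})$ into the formula of \Cref{prop:transport-Coxeter-graph},
\[
\Theta_{ij}=\frac{\lambda+\mu-(\zeta_i+\zeta_j)}{(1-\zeta_i)(1-\zeta_j)},
\]
gives a finite explicit list of values. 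The denominator is nonzero because $\zeta_i,\zeta_j\neq1$ at regular transport root points. This list is the set of possible finite-order rank-two invariants on that edge, and it depends only on the labels $(r_i,r_j,m_{ij})$ of the graph.

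In the involutive case $\zeta_i=\zeta_j=-1$ we do not need to do anything further: \Cref{thm:Finite-Order-Spectral Classification} already gives $\Theta_{ij}=\cos^2(\pi k/m_{ij})$ with $\gcd(k,m_{ij})=1$, so the edge label alone determines the finite set $\{\cos^2(\pi k/m_{ij})\}$.

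The main obstacle is interpreting ``determines'' when the vertex orders are not recorded, or when only $m_{ij}$ is given. In that situation the eigenvalue phases $\zeta_i$ are not fixed by the graph. I would handle this by stating the result relative to the vertex labels, which \Cref{def:Transport-Coxeter-Graph} includes whenever they are finite. I would then note that finiteness of the candidate set comes from the finiteness of the sets of roots of unity of orders $r_i$, $r_j$, and dividing $m_{ij}$.
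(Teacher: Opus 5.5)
Your proposal is correct and follows the same route as the paper: finiteness of $m_{ij}$ forces the eigenvalues $\lambda,\mu$ of $\overline M_{ij}$ to be roots of unity, and these are substituted into the formula of \Cref{prop:transport-Coxeter-graph}. Your extra steps (the constraint $\lambda\mu=\zeta_i\zeta_j$, and using the vertex labels $r_i,r_j$ to limit $\zeta_i,\zeta_j$ to finitely many values) make explicit the sense in which the graph ``determines'' a finite candidate set, which the paper's one-line proof leaves implicit.
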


\begin{proof}
If $m_{ij}<\infty$, then the eigenvalues $\lambda$ and $\mu$ are roots
of unity. The preceding proposition therefore constrains the possible
values of $\Theta_{ij}$ in terms of the finite-order spectral data.
\end{proof}

\begin{corollary}[Classical Coxeter Recovery]
Suppose
$r_i=2$
for every coincidence component.
Then the transport Coxeter graph reduces to the classical Coxeter graph.
\end{corollary}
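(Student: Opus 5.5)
The plan is to work entirely at the level of rank-two data: compare the edges and labels of $\Gamma_Q^{\mathrm{Cox}}$ with those of the classical Coxeter graph by way of the reduced operators $\overline M_{ij}$. Throughout, I interpret the statement in the setting where the classical Coxeter graph is meaningful, namely when the transport reflections $R_i$ at the relevant intersection points generate a finite real reflection group, as in the Weyl-quotient situation of \Cref{prop:classical-cartan-compatibility}.

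\textbf{Step 1: the vertex labels.} Since $r_i=2$ and $\operatorname{rank}(I-R_i)=1$, the operator $R_i$ is an involution. Its nontrivial eigenvalue is therefore $\zeta_i=-1$, and \Cref{lem:rank-one-reflection-formula} gives $R_i=I-2\nu_i\otimes\alpha_i$. So each $R_i$ is a genuine reflection. All vertex labels equal $2$, and by the classical convention these labels are suppressed, so the vertex data carries no extra information.

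\textbf{Step 2: the edge labels.} With $\zeta_i=\zeta_j=-1$, \Cref{thm:rank-two-characteristic-polynomial} gives $p_{ij}(t)=t^2-(4\Theta_{ij}-2)t+1$. Suppose an edge is present, i.e.\ $m_{ij}=\operatorname{ord}(\overline M_{ij})<\infty$. Then \Cref{thm:Finite-Order-Spectral Classification} gives $\Theta_{ij}=\cos^2(\pi k/m_{ij})$ with $\gcd(k,m_{ij})=1$. For a pair of simple reflections, \Cref{cor:classical-coxeter-recovery} gives $\Theta_{ij}=\cos^2(\pi/m_{ij}^{\mathrm{cl}})$, where $m_{ij}^{\mathrm{cl}}$ is the classical order of $s_is_j$.

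\textbf{Step 3: the transport order equals the classical order.} On $F_{ij}$ both reflections act trivially. Since $R_iR_j$ is semisimple (it lies in a finite group), $T_xX$ splits $R_iR_j$-equivariantly as $F_{ij}$ plus a complementary plane. It follows that the order of $\overline M_{ij}$ equals the order of $R_iR_j$, which is $m_{ij}^{\mathrm{cl}}$. Hence the edge label $m_{ij}$ coincides with the classical label.

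\textbf{Step 4: matching the edge sets.} Classically, $m_{ij}=2$ (commuting reflections, $\Theta_{ij}=0$) means no edge is drawn, and every other pair is joined with label $m_{ij}$. Matching this convention with the transport Coxeter graph is a matter of dropping label-$2$ edges. After doing so, the two graphs agree vertex by vertex and edge by edge.

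\textbf{Main obstacle.} The hard point is justifying the semisimplicity used in Step 3, and ruling out primitive $k\ne1$, without assuming a finite group structure from outside. In general $\overline M_{ij}$ might have finite order with a nonprimitive spectrum, or be unipotent. I would first try to argue directly. Since $\Theta_{ij}\in[0,1)$ whenever $m_{ij}<\infty$ and $m_{ij}>1$, the discriminant from \eqref{eq:discriminant-formula} is negative, except in the $\Theta_{ij}=0$ case. A negative discriminant gives distinct eigenvalues, hence diagonalizability. The case $\Theta_{ij}=0$ would be handled separately as the commuting case. If the primitivity issue ($k=1$) cannot be forced, I would state the corollary in the setting where the $D_i$ are the simple-reflection components, as in \Cref{prop:classical-cartan-compatibility}.
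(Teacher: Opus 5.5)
Your proposal is correct in the setting you choose, but the second half takes a different route from the paper and needs a hypothesis the statement does not contain. Step 1 and the first half of Step 2 are the paper's entire argument: $r_i=2$ forces $\zeta_i=-1$, and the Finite-Order Spectral Classification Theorem gives $\Theta_{ij}=\cos^2(\pi k/m_{ij})$. The paper then simply assumes primitive transport type to get $\Theta_{ij}=\cos^2(\pi/m_{ij})$. It concludes that the only surviving data are the exponents $m_{ij}$. In other words, it reads ``classical Coxeter graph'' abstractly, and this reading works for arbitrary $Q$.

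Your Steps 3--4 instead fix an ambient finite reflection group and match $\Gamma_Q^{\mathrm{Cox}}$ label by label with that group's actual Coxeter graph, via $\operatorname{ord}(\overline M_{ij})=\operatorname{ord}(s_is_j)$. In that setting this buys more: the labels are identified with group-theoretic orders rather than merely being consistent with them, and primitivity comes for free. The cost is that $Q$ must be a reflection quotient and the vertex set must be cut down to the simple-reflection components. Using all components of $V\to V/W$, already for $A_2$ the three reflection components form a triangle with every label equal to $3$, and the rotation component $\Gamma_{s_1s_2}$ violates $r_i=2$. So for general $Q$ you should fall back on the paper's abstract reading. Your Step 1 and the first half of Step 2, together with the primitivity assumption, already deliver it.

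Your stated obstacle is also milder than you suggest. On any edge $\Theta_{ij}=\cos^2(\pi k/m_{ij})\neq1$. Since $R_i(\nu_j)=\nu_j-2A_{ji}\nu_i$ and $R_j(\nu_i)=\nu_i-2A_{ij}\nu_j$, the plane $\operatorname{span}(\nu_i,\nu_j)$ is invariant under both reflections. The pairing of $(\alpha_i,\alpha_j)$ against $(\nu_i,\nu_j)$ has determinant $1-\Theta_{ij}\neq0$, so this plane meets $F_{ij}$ trivially. Hence $T_xX=F_{ij}\oplus\operatorname{span}(\nu_i,\nu_j)$ is an invariant splitting, and $\operatorname{ord}(R_iR_j)=\operatorname{ord}(\overline M_{ij})$ follows with no finite-group input. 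Primitivity, by contrast, genuinely cannot be forced in general: two mirrors of $H_2$ meeting at angle $2\pi/5$ give $k=2$, which is why the paper assumes it outright. For crystallographic $m\in\{2,3,4,6\}$ it is automatic, since the only admissible values $k\in\{1,m-1\}$ give the same $\Theta_{ij}$.
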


\begin{proof}
If $r_i=2$, then
$\zeta_i=-1$
for every coincidence component.
By the Finite--Order Spectral Classification Theorem (\Cref{thm:Finite-Order-Spectral Classification}),
$\Theta_{ij}
=
\cos^2\!\left(\frac{\pi k}{m_{ij}}\right)$.
For primitive transport type,
$\Theta_{ij}
=
\cos^2\!\left(\frac{\pi}{m_{ij}}\right)$,
which is precisely the classical Coxeter relation.
Thus the only finite-order data are the exponents $m_{ij}$, recovering
the ordinary Coxeter graph.
\end{proof}

\begin{remark}
The transport Dynkin graph records the intrinsic rank--two geometry
through the invariants $\Theta_{ij}$, whereas the transport Coxeter
graph records finite-order relations through the orders
$r_i$ and $m_{ij}$. In this sense, the Dynkin graph is the fundamental
geometric object, while the Coxeter graph records additional periodicity
data when finite-order transport relations are present.
\end{remark}

\subsection{Root Reconstruction}
\label{subsec:Root-Reconstruction}

We now show that the principal root-theoretic structures developed in the preceding sections arise canonically from the coincidence correspondence.

\begin{theorem}[Root Reconstruction Theorem]
\label{thm:Root-Reconstruction}
Let $Q:X\to Y$ be a finite morphism of smooth varieties.
Assume that the coincidence components $D\in\Phi_Q$ satisfy the regularity hypotheses of the Local Transport Theorem (\Cref{thm:Local-Transport-Theorem}) and the Wall Regularity Proposition (\Cref{prop:wall-regularity}).
Then the coincidence correspondence
$\mathcal C_Q=X\times_YX$
canonically determines:
\medskip
\begin{center}
\begin{tabular}{ll}
(i) Local transport symmetries $\tau_D$
&
(v) Transport trace forms $B_{ij}(x)$ \\[0.3em]
(ii) Root walls $H_D=D\cap\Delta$
&
(vi) Transport invariants $\Theta_{ij}(x)$ \\[0.3em]
(iii) Transport operators $R_D(x)$
&
(vii) Transport Cartan coefficients $A_{ij}(x)$ after a choice \\[0.3em]
(iv) Transport root lines $L_D(x)$
&
\qquad of normalized transport root datum.
\end{tabular}
\end{center}
\medskip
In particular, all intrinsic transport-geometric invariants associated with $Q$ are reconstructed from the coincidence correspondence $\mathcal C_Q$. The transport Cartan coefficients are obtained from these canonical invariants after the choice of normalized transport root data.
\end{theorem}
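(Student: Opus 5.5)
The plan is to walk through the chain of constructions in the order (i)--(vii). At each step I will check that the object depends only on $\mathcal C_Q$, on its irreducible decomposition $\mathcal C_Q=\Delta\cup D_1\cup\cdots\cup D_r$, and on the two projections $p_1,p_2$ inherited from $X\times X$. No auxiliary coordinates, defining equations or connection data should enter, except the explicitly allowed normalization in (vii). The starting point is that the coincidence component arrangement $\Phi_Q$ is determined by $\mathcal C_Q$ alone, since the irreducible decomposition of a variety is unique. The diagonal $\Delta$ is distinguished inside $X\times X$ and is isomorphic to $X$ by Proposition~\ref{prop:coincidence-correspondence}(ii), so it is also canonical.

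For (i) and (ii), I will invoke the Local Transport Theorem (\Cref{thm:Local-Transport-Theorem}). For each admissible $D$ and each point where $dp_1|_D$ is an isomorphism, it produces $\tau_D=p_2|_W\circ(p_1|_W)^{-1}$. This map is unique among local maps whose graph lies in $D$, and that uniqueness is what makes it canonical: any two germs constructed from $D$ agree. The root wall $H_D=D\cap\Delta$ is an intersection of two canonical subvarieties, hence canonical. Its smoothness at transverse points comes from Wall Regularity (\Cref{prop:wall-regularity}).

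For (iii) and (iv), $R_D(x)=(d\tau_D)_x$ is the differential of a canonical germ, so it is an intrinsic endomorphism of $T_xX$. For (iv), $\ker(I-R_D(x))$ is intrinsic. By \Cref{prop:root-wall-fixed-eigenspace} it equals $T_xH_D$, so its annihilator $L_D(x)=N_x^*H_D$ is canonical. At regular transport root points this line is one-dimensional.

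For (v) and (vi), $B_{ij}(x)$ is a trace of a product of intrinsic operators, and \Cref{prop:Intrinsic-TransportTrace-Form} already shows it is coordinate-independent. For (vi), $\Theta_{ij}=A_{ij}A_{ji}$ is computed from some choice of normalized data. Any two normalized choices $(\alpha_i,\nu_i)$ differ by the rescaling $\nu_i\mapsto\lambda_i\nu_i$, $\alpha_i\mapsto\lambda_i^{-1}\alpha_i$, because $L_i$ and $C_i=\operatorname{Im}(I-R_i)$ are lines and the normalization $\alpha_i(\nu_i)=1$ fixes the product. \Cref{prop:Transformation-Law} then gives invariance. In the finite-order case one can alternatively use \Cref{cor:Trace-Theoretic-Expression}, writing $\Theta_{ij}$ as $B_{ij}$ divided by eigenvalue factors that are themselves intrinsic to $R_i,R_j$.

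For (vii), the coefficients $A_{ij}=\alpha_j(\nu_i)$ are determined once normalized data are chosen, and they change by the explicit law $\lambda_i\lambda_j^{-1}$. So they are canonical up to this torus action, which is exactly what the statement claims.

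The main obstacle is making ``canonically determines'' precise for the local objects in (i), since $\tau_D$ exists only on étale or analytic neighborhoods. I would handle this by phrasing $\tau_D$ as a germ at $x$ and using the uniqueness clause of \Cref{thm:Local-Transport-Theorem} to show that germs from different neighborhoods agree. One also needs to verify that $C_i(x)$, used in (vii), is intrinsic in the same way, which is immediate once $R_i(x)$ is.
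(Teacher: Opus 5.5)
Your proposal is correct and follows essentially the same route as the paper's proof, which likewise walks through (i)--(vii) in order and obtains $\Theta_{ij}$ from \Cref{cor:Trace-Theoretic-Expression} when applicable and from $A_{ij}A_{ji}$ otherwise; your explicit appeals to uniqueness of the irreducible decomposition and to \Cref{prop:Transformation-Law} only make the canonicity more visible. One precision on your ``main obstacle'': the uniqueness clause of \Cref{thm:Local-Transport-Theorem} pins down only the section of $p_1|_D$ through a given point of $D$, so at a wall point you should take the germ of $\tau_D$ at $(x,x)\in D$ rather than ``at $x$'', because $p_1|_D$ can have several sheets over $x$ (e.g.\ for $Q(t)=t^3-3t$ the component $D=\{s^2+st+t^2=3\}$ contains both $(1,1)$ and $(1,-2)$ over the wall point $1$).
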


\begin{proof}
By the Local Transport Theorem (\Cref{thm:Local-Transport-Theorem}), each admissible coincidence component $D\in\Phi_Q$ determines a local transport symmetry
$\tau_D$.
Its associated root wall is
$H_D=D\cap\Delta$.
Differentiating the local transport symmetry yields the transport operator
$R_D(x)=(d\tau_D)_x$
at every regular transport root point $x\in H_D$.
By the definition of transport roots, the transport operator determines the transport root line
\[
L_D(x) = \operatorname{Ann}\bigl(\ker(I-R_D(x))\bigr) \subseteq T_x^*X.
\]
Given coincidence components $D_i,D_j\in\Phi_Q$, the corresponding transport operators determine the transport trace forms
\[
B_{ij}(x) = \operatorname{tr}\Bigl((I-R_i(x))(I-R_j(x))\Bigr).
\]
After choosing normalized transport root data
\[
(\alpha_i(x),\nu_i(x)) \qquad\text{and}\qquad (\alpha_j(x),\nu_j(x)),
\]
with
$\alpha_i(\nu_i)=1$ and $\alpha_j(\nu_j)=1$,
the transport Cartan coefficients are defined by
$A_{ij}(x) = \alpha_j(x)\bigl(\nu_i(x)\bigr)$.
Finally, The normalized transport invariants
$\Theta_{ij}(x)$
are then obtained from  \Cref{cor:Trace-Theoretic-Expression} when applicable. Otherwise, they are obtained form the definition $\Theta_{ij}(x) = A_{ij}(x) A_{ji}(x)$.
Thus every object listed above is determined by the coincidence correspondence
$\mathcal C_Q=X\times_YX$,
either canonically or, in the case of the Cartan coefficients, after the standard normalization procedure provided by a choice of transport root datum.
\end{proof}

\begin{remark}
The theorem may be viewed as a nonlinear analogue of the classical fact that the geometry of a reflection arrangement determines its associated root system and Cartan data. In the present setting, the role of the reflection arrangement is played by the coincidence correspondence $X\times_Y X$, and the resulting root-theoretic structures vary naturally over the underlying variety $X$.
\end{remark}

\section{Finite Galois Covers and Group Reconstruction}
\label{sec:Finite-Galois-Covers}

The coincidence-root framework sadmits a particularly transparent description for finite Galois covers. In this setting every coincidence component arises from a deck transformation, the transport symmetries recover the deck group itself, and the geometry of the corresponding root walls is determined entirely by the fixed-point structure of the group action. This provides a common framework encompassing finite reflection groups, complex reflection groups and more general finite Galois quotients.

\begin{theorem}[Galois Reconstruction Theorem]
\label{thm:galois-reconstruction}
Let $Q:X\to Y$ be a finite Galois cover of irreducible smooth varieties with deck transformation group $G=\operatorname{Deck}(Q)$, so that $Y\cong X/G$. The following statements hold.
\begin{enumerate}[label=\emph{(\roman*)}]
\item The coincidence correspondence decomposes as 
\begin{equation}
    X\times_Y X = \bigcup_{g\in G}\Gamma_g,
\end{equation}
where
    $\Gamma_g = \{(x,gx):x\in X\}$
is the graph of the deck transformation $g$.
\item The irreducible coincidence components are precisely 
\begin{equation}
    \Phi_Q = \{\Gamma_g:g\neq e\}.
\end{equation}
\item For every nontrivial deck transformation $g$, 
\begin{equation}
    H_g = \Gamma_g\cap\Delta_X \cong \operatorname{Fix}(g).
\end{equation}
Therefore, the root walls are canonically identified with the
fixed-point loci of the nontrivial deck transformations.
\item The transport symmetry associated to $D =\Gamma_g$ is 
\begin{equation}
    \tau_{D}=g.
\end{equation}
\item The transport group satisfies 
\begin{equation}
    \mathcal T_Q = \langle \tau_D:D\in\Phi_Q\rangle = \operatorname{Deck}(Q).
\end{equation}
\end{enumerate}
\end{theorem}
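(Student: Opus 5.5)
The plan is to prove (i) first, since everything else follows from it together with earlier results. For (i), one inclusion is immediate: for $g\in G$ we have $Q(gx)=Q(x)$, so $\Gamma_g\subseteq X\times_YX$. For the reverse inclusion I will use that a Galois cover identifies $Y$ with the quotient $X/G$, whose points are the $G$-orbits. Hence $Q(x_1)=Q(x_2)$ forces $x_2=gx_1$ for some $g\in G$, and so $(x_1,x_2)\in\Gamma_g$. This set-theoretic statement is the only place where the Galois hypothesis is really used, and I expect it to be the main obstacle. One needs that the fibers of $Q$ are exactly the $G$-orbits everywhere, including over the branch locus, and not only over the étale locus $U$ of \Cref{prop:coincidence-correspondence}. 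I will take this from the standard description of geometric quotients by finite groups: the $G$-invariant functions separate orbits, and $Y\cong X/G$. Reducedness and scheme structure will be ignored, in line with the paper's standing admissibility conventions.

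For (ii), each $\Gamma_g$ is isomorphic to $X$ via $p_1$, so it is closed, irreducible, and of dimension $\dim X=\dim \mathcal C_Q$. Distinct elements give distinct graphs: since $G$ acts faithfully as deck transformations, $g\neq h$ forces $gx\neq hx$ for some $x$. A finite union of closed irreducible sets of equal dimension, none contained in another, is the irreducible decomposition. Removing $\Gamma_e=\Delta$ therefore gives $\Phi_Q=\{\Gamma_g:g\neq e\}$. Admissibility is automatic, because $\Gamma_g$ is smooth and both $p_1|_{\Gamma_g}$ and $p_2|_{\Gamma_g}$ are isomorphisms onto $X$, the second being the map $x\mapsto gx$.

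For (iii), a point $(x,gx)$ lies in $\Delta$ exactly when $gx=x$. Under the identification $\Delta\cong X$ of \Cref{remark:root_wall}, this gives $H_g=\Gamma_g\cap\Delta\cong\operatorname{Fix}(g)$. For (iv), $p_1|_{\Gamma_g}$ is a global isomorphism, so the hypotheses of \Cref{thm:Local-Transport-Theorem} hold at every point. The uniqueness part of that theorem then identifies $\tau_{\Gamma_g}=p_2\circ(p_1|_{\Gamma_g})^{-1}=g$ on every neighborhood, and in fact globally.

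For (v), combining (ii) and (iv) shows that the set $\{\tau_D : D\in\Phi_Q\}$ is exactly $G\setminus\{e\}$. The group these elements generate is therefore all of $G$, since every element of $G$ already appears as a generator or is the identity, so $\mathcal T_Q=\operatorname{Deck}(Q)$. The only subtlety here is a convention point: composites of local symmetries, as in \Cref{def:Composite-Transport-Symmetry}, must be read as global automorphisms. This reading is justified because each $\tau_D$ extends to the regular automorphism $g$ of $X$.
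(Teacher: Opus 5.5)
Your proof is correct. For (iii)--(v) it is essentially the paper's argument; the difference lies in how you obtain (i) and (ii).

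\textbf{Your route.} You use the quotient property globally. Since $Y\cong X/G$ is a geometric quotient, every fiber of $Q$ is a single $G$-orbit, including the fibers over the branch locus. Hence $X\times_YX=\bigcup_g\Gamma_g$ holds pointwise. Identifying the components is then bookkeeping with finitely many closed irreducible sets of equal dimension.

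\textbf{The paper's route.} The paper uses only the generic Galois structure. Over the \'etale locus $U$, $G$ acts simply transitively on fibers, so $(X\times_YX)|_U=\coprod_g\Gamma_g|_U$. It then argues that every irreducible component of $X\times_YX$ has dimension $\dim X$. Such a component must therefore meet the preimage of $U$, and so it is one of the graphs.

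\textbf{Comparison.} The paper's route fits the function-field definition of a Galois cover and needs no information about special fibers. However, its equidimensionality step does not follow from $p_1$ being finite and surjective alone, which is what the paper cites. A finite surjective map can have lower-dimensional components in its source. What the step actually needs is flatness of $p_1$. This is available here by miracle flatness, since $X$ and $Y$ are smooth, so every component of $X\times_YX$ dominates $X$. Alternatively, it needs exactly the orbit description you invoke. Your version sidesteps that step entirely. You also verify two things the paper leaves implicit: that each $\Gamma_g$ is admissible, and that the Local Transport Theorem applies at every point of $\Gamma_g$.
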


\begin{proof}
For every $g \in G$, we have $Q(gx) = Q(x)$ since $Q: X \to Y = X/G$ is the quotient map. Hence, $(x, gx) \in X \times_Y X$ for every $x \in X$, and we have the inclusion:
\[
\Gamma_g := \{ (x, gx) : x \in X \} \subseteq X \times_Y X.
\]
Since $g: X \to X$ is an automorphism of the irreducible variety $X$, its graph $\Gamma_g$ is irreducible and isomorphic to $X$. In particular, $\dim(\Gamma_g) = \dim(X)$. 

Let $U \subseteq Y$ denote the maximal open subset over which the quotient map $Q$ is étale. Over $U$, the action of $G$ on the fibers of $Q$ is free and transitive. Consequently, if $y \in U$ and 
\[
Q^{-1}(y) = \{ x_1, \dots, x_{|G|} \},
\]
then every ordered pair $(x_i, x_j)$ in the fiber product 
\[
(X \times_Y X)_y = Q^{-1}(y) \times Q^{-1}(y)
\]
is uniquely of the form $(x_i, gx_i)$ for a unique element $g \in G$. It follows that over the étale locus $U$:
\[
(X \times_Y X)|_U = \coprod_{g \in G} \Gamma_g|_U.
\]

Since $U$ is dense in $Y$, its inverse image
$p_1^{-1}(Q^{-1}(U))$ is a dense open subset of
$X\times_YX$.
Moreover, $p_1:X\times_YX\to X$ is finite and surjective,
so every irreducible component of $X\times_YX$
has dimension $\dim(X)$.
Consequently, every irreducible component meets the
inverse image of the étale locus and is therefore
represented there by one of the graphs $\Gamma_g$.
Each graph $\Gamma_g$ is an irreducible closed
subvariety of dimension $\dim(X)$. Moreover, the restrictions $\Gamma_g|_U$ are pairwise disjoint, so the graphs $\Gamma_g$ are pairwise distinct. Therefore, every irreducible component of $X \times_Y X$ is equal to one of the graphs $\Gamma_g$, yielding the decomposition:
\[
X \times_Y X = \bigcup_{g \in G} \Gamma_g.
\]
Since the graphs are irreducible, pairwise distinct, and none is contained in another, they are precisely the irreducible components of the coincidence correspondence. This proves (i) and (ii).

\smallskip

For (iii), observe that $(x, x) \in \Gamma_g$ if and only if $(x, x) = (x, gx)$, which is equivalent to $g(x) = x$. Hence:
\[
H_g := \Gamma_g \cap \Delta = \{ (x, x) : g(x) = x \}.
\]
Under the canonical identification $\Delta \simeq X$, the root wall $H_g$ corresponds exactly to the fixed-point locus $\operatorname{Fix}(g)$.

\smallskip

For (iv), the projection $p_1|_{\Gamma_g} : \Gamma_g \to X$ is an isomorphism with inverse $x \mapsto (x, gx)$. Therefore, the transport symmetry associated with $D =\Gamma_g$ is:
\[
\tau_{D} = p_2 \circ (p_1|_{\Gamma_g})^{-1} = g.
\]

\smallskip

Finally, the diagonal component $\Gamma_e = \Delta$ corresponds to the identity element $e \in G$, while every nontrivial deck transformation $g \in G$ corresponds to the component $\Gamma_g$. Since $\tau_{D} = g$ for all $g \in G$, the transport symmetry group generated by the coincidence components is:
\[
\mathcal{T}_Q = \langle \tau_D : D \in \Phi_Q \rangle = G.
\]
This establishes (v) and completes the proof.
\end{proof}

\begin{corollary}[Deck Group Reconstruction]
\label{cor:Deck-Group-Reconstruction}
The coincidence correspondence determines the deck transformation
group of a finite Galois cover.
\end{corollary}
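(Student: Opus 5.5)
The plan is to deduce this corollary directly from the Galois Reconstruction Theorem (\Cref{thm:galois-reconstruction}). The point to establish is that the group $G=\operatorname{Deck}(Q)$, as an abstract group and as a group of automorphisms of $X$, can be read off from $\mathcal C_Q=X\times_YX$ alone. By part (ii), the irreducible components of $\mathcal C_Q$ are exactly the graphs $\Gamma_g$. These are pairwise distinct, as shown in the proof of (i), since their restrictions over the étale locus are disjoint. Together with the diagonal $\Gamma_e=\Delta$, this gives a bijection between the set of irreducible components of $\mathcal C_Q$ and $G$.

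First I will recover each group element from its component. By part (iv), the transport symmetry of $\Gamma_g$ is $\tau_{\Gamma_g}=p_2\circ(p_1|_{\Gamma_g})^{-1}=g$. Here $p_1|_{\Gamma_g}$ is a global isomorphism, so $\tau_{\Gamma_g}$ is a globally defined automorphism of $X$ and not merely a local one. Thus the set $\{\tau_D : D\in\Phi_Q\}\cup\{\mathrm{id}_X\}$ is determined by $\mathcal C_Q$ and equals $G$ as a subset of $\operatorname{Aut}(X)$.

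Next I will recover the group law. Composition in $\operatorname{Aut}(X)$ restricts to this finite set, and by part (v) the set is closed under composition, giving $\mathcal T_Q=G$. I would also note the intrinsic description of multiplication in terms of correspondences. The composite correspondence $\Gamma_g\circ\Gamma_h=\{(x,z):\exists y,\,(x,y)\in\Gamma_h,(y,z)\in\Gamma_g\}$ equals $\Gamma_{gh}$, so the product is itself one of the components. Likewise the inverse corresponds to the transpose, $\iota(\Gamma_g)=\Gamma_{g^{-1}}$. Hence the group structure on the component set is defined purely in terms of $\mathcal C_Q$, and $g\mapsto\Gamma_g$ is an isomorphism of groups.

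I expect no serious obstacle, since the substantive work is already done in the Galois Reconstruction Theorem. The only point needing care is to make sure that ``determines'' is justified without circularly using $G$. For this I would stress that the components, the projections $p_1,p_2$, and therefore the maps $\tau_D$ are all defined from the fiber product diagram alone, so the isomorphism $G\cong\mathcal T_Q$ expresses $G$ in terms of data intrinsic to $\mathcal C_Q$.
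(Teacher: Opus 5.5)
Your proposal is correct and follows the same route as the paper, which simply invokes part (v) of the Galois Reconstruction Theorem (\Cref{thm:galois-reconstruction}), namely $\mathcal T_Q=\operatorname{Deck}(Q)$. Your extra observations that $\Gamma_g\circ\Gamma_h=\Gamma_{gh}$ and $\iota(\Gamma_g)=\Gamma_{g^{-1}}$ are correct. They make the recovery of the group law from $\mathcal C_Q$ more explicit, but they elaborate the same argument rather than giving a different one.
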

\begin{proof}
By \Cref{thm:galois-reconstruction},
$\mathcal T_Q=\operatorname{Deck}(Q)$.
\end{proof}

 We note that the intersection geometry of coincidence components is governed by the fixed-point structure of the deck group.

\begin{proposition}[Intersection Geometry]
\label{prop:Intersection-Geometry}
For $g,h\in G$, 
\begin{equation}
    \Gamma_g\cap\Gamma_h \cong \operatorname{Fix}(h^{-1}g).
\end{equation}
\end{proposition}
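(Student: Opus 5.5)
The plan is to exhibit an explicit isomorphism between $\Gamma_g\cap\Gamma_h$ and $\operatorname{Fix}(h^{-1}g)$ using the first projection, exactly as in part (iii) of the Galois Reconstruction Theorem (\Cref{thm:galois-reconstruction}). Indeed, (iii) is the special case $h=e$, since $\Gamma_e=\Delta$.

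\textbf{Step 1 (set-theoretic description).} A point of $X\times X$ lies in $\Gamma_g$ if and only if it has the form $(x,gx)$. It lies in $\Gamma_h$ if and only if it has the form $(x',hx')$. Comparing first coordinates forces $x'=x$, so
\[
(x,y)\in\Gamma_g\cap\Gamma_h \iff y=gx=hx.
\]
Since $h$ is an automorphism of $X$, the condition $gx=hx$ is equivalent to $h^{-1}g(x)=x$, that is, $x\in\operatorname{Fix}(h^{-1}g)$. Hence
\[
\Gamma_g\cap\Gamma_h=\{(x,gx): x\in\operatorname{Fix}(h^{-1}g)\}.
\]

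\textbf{Step 2 (the isomorphism).} By part (iv) of \Cref{thm:galois-reconstruction}, $p_1|_{\Gamma_g}:\Gamma_g\to X$ is an isomorphism with inverse $x\mapsto(x,gx)$. Restricting this isomorphism to the closed subset $\Gamma_g\cap\Gamma_h$ identifies it with its image, which by Step 1 is the closed subvariety $\operatorname{Fix}(h^{-1}g)\subseteq X$. The inverse map is $x\mapsto (x,gx)$ restricted to $\operatorname{Fix}(h^{-1}g)$, which is again a morphism.

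\textbf{Step 3 (scheme structure).} The only point requiring care is the scheme structure, if one wants more than an isomorphism of reduced varieties. The plan is to check it scheme-theoretically by pulling back along the isomorphism $\Gamma_g\cong X$. Under $x\mapsto(x,gx)$, the subscheme $\Gamma_h=(\mathrm{id}\times h)(X)$ pulls back to the equalizer of $g$ and $h$. This equalizer is $(\mathrm{id},h^{-1}g)^{-1}(\Delta)$, which is the scheme-theoretic fixed locus of $h^{-1}g$.

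Alternatively, in keeping with the paper's convention of treating these loci as subvarieties, one can simply state the isomorphism at the level of reduced varieties, following the proof of (iii). I expect no genuine obstacle here.
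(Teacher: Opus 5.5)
Your proposal is correct and takes essentially the same route as the paper. Both arguments reduce membership in $\Gamma_g\cap\Gamma_h$ to $gx=hx$, equivalently $x\in\operatorname{Fix}(h^{-1}g)$, and then use the first projection $(x,gx)\mapsto x$ as the isomorphism; your Step 3, which identifies the pullback of $\Gamma_h$ along $x\mapsto(x,gx)$ with the scheme-theoretic fixed locus $(\mathrm{id},h^{-1}g)^{-1}(\Delta)$, is a valid refinement that the paper does not attempt, since it works only with reduced subvarieties.
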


\begin{proof}
A point belongs to $\Gamma_g\cap\Gamma_h$ precisely when $(x,gx)=(x,hx)$. Since the first coordinates already agree, this is equivalent to $gx=hx$. Applying $h^{-1}$ gives $h^{-1}g(x)=x$. 
Hence, $\Gamma_g\cap\Gamma_h = \{(x,gx):x\in\operatorname{Fix}(h^{-1}g)\}$. The projection onto the first factor therefore induces an isomorphism $\Gamma_g\cap\Gamma_h \longrightarrow \operatorname{Fix}(h^{-1}g)$ mapping $(x,gx)\longmapsto x$. Thus, $\Gamma_g\cap\Gamma_h \cong \operatorname{Fix}(h^{-1}g)$.
\end{proof}
\begin{corollary}
The pairwise incidence geometry of the coincidence-root system is completely
determined by the collection of fixed-point loci
$\{\operatorname{Fix}(g):g\in G\}$.
\end{corollary}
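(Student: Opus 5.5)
The corollary follows by combining the Galois Reconstruction Theorem (\Cref{thm:galois-reconstruction}) with the Intersection Geometry proposition (\Cref{prop:Intersection-Geometry}). First I will make precise what ``pairwise incidence geometry'' means. It is the data of the components $\Phi_Q=\{\Gamma_g:g\neq e\}$ together with, for each pair, the intersection $\Gamma_g\cap\Gamma_h$. The diagonal $\Delta=\Gamma_e$ is included, so that the root walls $H_g=\Gamma_g\cap\Delta$ appear as the special case $h=e$. By part (ii) of \Cref{thm:galois-reconstruction}, the components are indexed bijectively by $G$, so it suffices to express every pairwise intersection in terms of fixed loci.

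The key step is \Cref{prop:Intersection-Geometry}: for $g,h\in G$, the first projection gives an isomorphism $\Gamma_g\cap\Gamma_h\cong\operatorname{Fix}(h^{-1}g)$, with inverse $x\mapsto(x,gx)$. I will stress that this is more than an abstract isomorphism. As a subvariety of $X\times X$, the intersection equals the image of $\operatorname{Fix}(h^{-1}g)$ under the graph embedding of $g$. Hence the locus itself is recovered from $\operatorname{Fix}(h^{-1}g)$ and the group element $g$, which is already part of the indexing. For $h=e$ this specializes to part (iii) of \Cref{thm:galois-reconstruction}, namely $H_g\cong\operatorname{Fix}(g)$.

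Since $h^{-1}g$ ranges over $G$, every pairwise intersection is read off from the family $\{\operatorname{Fix}(k):k\in G\}$. Coincidences are also detected: $\Gamma_g\cap\Gamma_h$ is empty exactly when $\operatorname{Fix}(h^{-1}g)=\emptyset$, and its dimension equals $\dim\operatorname{Fix}(h^{-1}g)$. The same holds for incidences with root walls.

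The only real obstacle is interpretive: ``completely determined'' has to be read as determined together with the group labeling of the components, which \Cref{thm:galois-reconstruction} supplies canonically. Once that is stated, no further computation is needed.
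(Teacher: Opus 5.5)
Your proposal is correct and takes the same route as the paper. The paper gives no separate proof: it treats the corollary as an immediate consequence of the Intersection Geometry proposition $\Gamma_g\cap\Gamma_h\cong\operatorname{Fix}(h^{-1}g)$, combined with the indexing of components by $G$ from the Galois Reconstruction Theorem. Your point that ``determined'' must be read together with the labeling $g\mapsto\operatorname{Fix}(g)$, and with the action of $g$ to recover the embedded locus $\{(x,gx)\}$, is a fair sharpening of what the paper leaves implicit.
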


\begin{remark}
    The quadratic cover of \Cref{ex:quadratic_cover} is one of the simplest illustrations of the Galois Reconstruction Theorem.
\end{remark}

\begin{remark}[Reflection-Type Components]
If $\operatorname{codim}\bigl(\operatorname{Fix}(g)\bigr)=1$, then the corresponding root wall is a hypersurface. In particular, for quotients by finite reflection groups the root walls are precisely the classical reflecting hyperplanes.
\end{remark}

\subsection{Finite Reflection Groups}
\label{subsec:Finite-Reflection-Groups}
\begin{corollary}[Weyl Group Recovery]
Let $\pi:V\to V/W$ be the quotient map associated to a finite Weyl group $W$. Then the following statements are true:

\begin{enumerate}[label=\emph{(\roman*)}]
\item $\mathcal C_\pi = \bigcup_{w\in W}\Gamma_w$;
\item the root walls coincide with the classical reflection hyperplanes;
\item the transport reflections coincide with the Weyl reflections;
\item the transport group satisfies $\mathcal T_\pi=W$;
\item the transport Cartan coefficients satisfy $A_{ij} = \frac{1}{2} \langle\alpha_j,\alpha_i^\vee\rangle$.
\end{enumerate}

\end{corollary}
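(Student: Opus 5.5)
The plan is to treat the Weyl-group quotient as a special case of the Galois Reconstruction Theorem (\Cref{thm:galois-reconstruction}) and then import the Cartan computation from \Cref{prop:classical-cartan-compatibility}.

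The first step is to check the hypotheses. $W$ is a finite group acting linearly and faithfully on the smooth irreducible variety $V$. By Chevalley's theorem, $V/W$ is an affine space, hence smooth. So $\pi:V\to V/W$ is a finite Galois cover with $\operatorname{Deck}(\pi)=W$. Faithfulness guarantees that no nontrivial $w$ acts trivially on $V$, and this is what makes the graphs $\Gamma_w$ pairwise distinct.

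With the hypotheses in place, the items follow in order.
\begin{enumerate}[label=(\roman*)]
\item This is immediate from part (i) of \Cref{thm:galois-reconstruction}.
\item Part (iii) of the same theorem gives $H_w\cong\operatorname{Fix}(w)$, which is a linear subspace of $V$. The root walls of hypersurface type, i.e.\ those of codimension one, are exactly the $\operatorname{Fix}(w)$ that are hyperplanes. A standard fact about finite real reflection groups is that $w$ fixes a hyperplane pointwise if and only if $w$ is a reflection $s_\beta$, in which case $\operatorname{Fix}(w)=\beta^\perp$. So the codimension-one root walls are precisely the classical reflecting hyperplanes.
\item By part (iv), $\tau_{\Gamma_w}=w$. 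Hence the transport symmetries attached to the components $\Gamma_{s_\beta}$ are the Weyl reflections. These components are $\iota$-invariant because $s_\beta^2=e$, which is consistent with \Cref{prop:Reflection-Type Transport-Components}.
\item This is part (v) of the theorem. Alternatively, $W$ is generated by its reflections, so the subgroup generated by the reflection-type transport symmetries is already all of $W$.
\item Since $\tau=s_i$ is linear, $R_i=s_i$ at every point. Each point of $H_i$ is a regular transport root point, because $\operatorname{rank}(I-s_i)=1$. The Cartan identity then follows verbatim from \Cref{prop:classical-cartan-compatibility}, taking $\nu_i=\alpha_i^\vee$ and transport root $\alpha_i/2$.
\end{enumerate}

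The main obstacle is the scope of ``root walls'' in (ii). Non-reflection elements $w$ also give components with walls $\operatorname{Fix}(w)$, but these have codimension at least two. Statement (ii) must therefore be read as concerning the codimension-one, i.e.\ reflection-type, walls, in line with the Remark on Reflection-Type Components. I would make this interpretation explicit.

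Item (v) also needs the intersection point $x\in H_i\cap H_j$ to be transverse. This holds because distinct simple roots are linearly independent. Finally, the admissibility of each $\Gamma_w$ is immediate: $\Gamma_w\cong V$ is smooth, and $p_1|_{\Gamma_w}$ and $p_2|_{\Gamma_w}$ are isomorphisms.
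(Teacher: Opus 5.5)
Your proposal is correct and follows the paper's route: items (i)--(iv) are read off from \Cref{thm:galois-reconstruction} once $\pi$ is recognized as a finite Galois cover with deck group $W$, and (v) is the computation of \Cref{prop:classical-cartan-compatibility} with $\nu_i=\alpha_i^\vee$ and transport root $\alpha_i/2$. Your added checks are sound refinements that the paper's proof leaves implicit: smoothness of $V/W$ via Chevalley, admissibility and regularity, and above all restricting (ii) to the codimension-one walls, since non-reflection elements give walls $\operatorname{Fix}(w)$ of codimension at least two.
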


\begin{proof}
The quotient map $V\to V/W$ is a finite Galois cover with deck group $W$. The first four statements follow immediately from \Cref{thm:galois-reconstruction}.

Let $s_i,s_j\in W$ be simple reflections.
By \Cref{thm:galois-reconstruction},
the associated transport symmetries coincide with
$s_i$ and $s_j$.
Comparing the transport reflection formula with the classical
reflection formula yields
$A_{ij}
=
\frac12\langle\alpha_j,\alpha_i^\vee\rangle$.
Thus the transport Cartan matrix coincides with the classical
Cartan matrix.
\end{proof}

\subsection{Complex Reflection Groups}
\label{subsec:Complex-Reflection-Groups}

\begin{corollary}[Shephard--Todd Recovery]
\label{cor:Shephard--Todd-Recovery}
Let $G\subseteq GL(V)$ be a finite complex reflection group and let $\pi:V\to V/G$ be the quotient map. Then the coincidence correspondence determines:
\begin{center}
\begin{tabular}{p{0.45\textwidth} p{0.45\textwidth}}
\emph{(i)} the complex reflection arrangement
&
\emph{(ii)} the root walls
\\[0.4em]
\emph{(iii)} the finite-order transport operators
&
\emph{(iv)} the normalized complex Cartan data
\\[0.4em]
\emph{(v)} the group $G$
&
\end{tabular}
\end{center}
\end{corollary}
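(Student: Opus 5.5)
The plan is to deduce the statement from the Galois Reconstruction Theorem (\Cref{thm:galois-reconstruction}), in parallel with the Weyl Group Recovery corollary. First I will check that $\pi:V\to V/G$ is a finite Galois cover of smooth irreducible varieties with deck group $G$. The quotient $V/G$ is smooth because, by Chevalley--Shephard--Todd, $k[V]^G$ is a polynomial ring, so $V/G\cong\mathbb A^n$. The map $\pi$ is finite because $k[V]$ is integral over $k[V]^G$. It is Galois with deck group $G$ because $G$ acts faithfully and freely on the complement of the reflection arrangement, which is the étale locus. \Cref{thm:galois-reconstruction} then gives $\mathcal C_\pi=\bigcup_{g\in G}\Gamma_g$ and $\Phi_\pi=\{\Gamma_g:g\ne e\}$, with $\tau_{\Gamma_g}=g$ and $H_g\cong\operatorname{Fix}(g)$. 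Part (v) is then exactly \Cref{cor:Deck-Group-Reconstruction}: $\mathcal T_\pi=G$.

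For (i) and (ii), I will single out the components $\Gamma_g$ whose root wall $H_g=\operatorname{Fix}(g)=\ker(I-g)$ has codimension one. These are exactly the $\Gamma_s$ with $s$ a complex reflection. So the root walls of reflection-type components are the reflecting hyperplanes, and their union recovers the complex reflection arrangement. For non-reflection elements, $\operatorname{Fix}(g)$ has codimension at least $2$. These components are therefore detected intrinsically by the dimension of $\Gamma_g\cap\Delta$, and they are discarded from the arrangement.

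For (iii), I will use the fact that $g$ is linear, so $(d\tau_{\Gamma_g})_x=g$ at every $x\in H_g$. For a reflection $s$ of order $m_s$, $R_s=s$ has finite order. It has $\operatorname{rank}(I-s)=1$, so every point of $H_s$ is a regular transport root point, and its nontrivial eigenvalue $\zeta_s$ is a primitive $m_s$-th root of unity. By \Cref{lem:rank-one-reflection-formula}, $I-s=(1-\zeta_s)\nu_s\otimes\alpha_s$, where $\alpha_s$ is a linear form defining $H_s$ and $\nu_s$ spans $\operatorname{Im}(I-s)$, normalized so that $\alpha_s(\nu_s)=1$. These give the normalized complex root/coroot pairs. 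For (iv), the coefficients $A_{st}=\alpha_t(\nu_s)$, the invariants $\Theta_{st}$, and $B_{st}$ are then defined at points of $H_s\cap H_t$. They are constant there because everything is linear, and \Cref{thm:Finite-Order-Transport-Trace-Formula} and \Cref{cor:Trace-Theoretic-Expression} relate them to the traces.

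The main obstacle is making (iv) precise. The individual $A_{st}$ are only defined up to the rescaling of \Cref{prop:Transformation-Law}, so ``normalized complex Cartan data'' must be read as the class of the matrix $(A_{st})$ modulo this diagonal rescaling. Equivalently, it is the invariants $\Theta_{st}$ together with the eigenvalues $\zeta_s$. Transversality of $H_s$ and $H_t$ at their intersection holds whenever $H_s\ne H_t$, since distinct hyperplanes through $0$ meet transversely. Once the reading is fixed, matching it with the standard complex Cartan matrix of a chosen generating set of reflections, as in \Cref{prop:classical-cartan-compatibility}, is a routine check.
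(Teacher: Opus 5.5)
Your proposal is correct and follows the same route as the paper: Chevalley--Shephard--Todd makes $\pi$ a finite Galois cover of smooth varieties with deck group $G$, and the Galois Reconstruction Theorem then yields the components, root walls, transport operators $R_s=s$, and the group. Your treatment of (iv), via the Rank-One Reflection Formula and the rescaling ambiguity of the Transformation Law, is more explicit than the paper's one-line appeal to $\Theta_{ij}=A_{ij}A_{ji}$.

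One small correction: the class of $(A_{st})$ modulo diagonal rescaling is not in general equivalent to the $\Theta_{st}$ together with the $\zeta_s$, because cyclic products such as $A_{s_1s_2}A_{s_2s_3}A_{s_3s_1}$ are also rescaling-invariant. The only freedom in normalized root data is exactly that rescaling, so this class is still determined by $\mathcal C_\pi$ and your argument is unaffected.
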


\begin{proof}
By the Shephard--Todd--Chevalley theorem, $V/G$ is smooth and $\pi:V\to V/G$ is a finite Galois cover with deck group $G$. The reflection arrangement, root walls, transport operators, and group structure are therefore recovered from \Cref{thm:galois-reconstruction}.

For a complex reflection $s$, the associated transport operator is $R_s=s$. The Complex Cartan Formula yields $\Theta_{ij} = A_{ij}A_{ji}$, showing that the coincidence correspondence recovers the rank-two
Cartan invariants of the complex reflection arrangement.
\end{proof}

\begin{remark}[Finite-Order Transport Symmetries]
\label{remark:Finite-Order-Transport-Symmetries}
Unlike the Weyl-group case, the transport symmetries associated with
complex reflection groups need not be involutions. Let $D_s=\Gamma_s$.  If $s\in G$ is a
complex reflection of order $m$, then the associated transport symmetry
satisfies
\[
\tau_{D_s}=s,
\qquad
\tau_{D_s}^{\,m}=\operatorname{id}.
\]
Thus complex reflection groups provide a natural source of finite-order
transport operators.
\end{remark}

\begin{remark}
The coincidence correspondence determines both the reflection arrangement and the full group structure. Accordingly, the
coincidence correspondence contains strictly more information than the
reflection arrangement alone.
\end{remark}

\subsection{Finite Monodromy and Transport Rigidity}
\label{subsec:Finite Monodromy-and-Transport-Rigidity}

The preceding examples share a common feature: the transport symmetries arise from a finite group acting on the fibers of a finite cover. In the Galois case, \Cref{thm:galois-reconstruction} identifies the transport group with the deck transformation group itself:
$\mathcal T_Q=\operatorname{Deck}(Q)$.
Therefore, every transport symmetry has finite order, every transport operator has finite-order linearization at regular transport root points, and the rank-two transport invariants are constrained by the finite-order spectral classification theorem.

This phenomenon is not restricted to Galois covers. More generally, whenever transport symmetries arise as global regular automorphisms preserving a finite morphism, they are forced to act faithfully on the finite \'etale fibers of the cover. As a result, the transport group embeds into a finite permutation group.

The following section formalizes this observation and establishes a rigidity theorem for finite morphisms. It shows that global regular transport symmetries cannot produce continuously varying rank-two transport invariants. The rigidity phenomenon established below is fundamentally rank-two in
nature. It constrains the pairwise transport invariants
$\Theta_{ij}$ associated with finite-monodromy transport symmetries,
but does not exclude nontrivial rank-one transport geometry or
higher-rank moduli beyond the scope of the rank-two reduction
theorem.

\section{Finite-Monodromy Rigidity}
\label{sec:finite-monodromy-rigidity}

The examples of finite Galois covers considered in the previous section
suggest a general rigidity phenomenon.
For a finite Galois cover, the transport group coincides with the deck
transformation group and is therefore finite.
Therefore, every transport symmetry has finite order and every
transport operator arising at a regular transport root point has
finite-order linearization.

The purpose of this section is to show that this phenomenon is not
special to the Galois setting.
More generally, whenever transport symmetries arise from global regular
automorphisms preserving a finite morphism, the resulting transport
group is necessarily finite.
This imposes strong restrictions on the possible rank-two transport
geometry.
The key observation is that global regular transport symmetries act on
the fibers of the finite morphism and therefore induce permutations of
the generic \'etale fiber.
As a result, the transport group embeds into a finite symmetric group.
The finite-order spectral classification theorem then implies that the
associated rank-two transport invariants belong to a discrete set.

The rigidity phenomenon established below is fundamentally rank-two in
nature.
It constrains the pairwise transport invariants
$\Theta_{ij}=A_{ij}A_{ji}$,
but does not exclude the existence of nontrivial rank-one transport
geometry or higher-rank moduli not detected by the rank-two reduction
theorem.

\subsection{Faithfulness on the Generic \'Etale Fiber}
\label{subsec:Faithfulness on the Generic-Etale-Fiber}

The following lemma provides the mechanism through which finite
morphisms produce finite transport groups.

\begin{lemma}[Faithfulness of Global Regular Transport Symmetries]
\label{lem:faithfulness-transport}
Let
$Q:X\longrightarrow Y$
be a finite dominant morphism between irreducible varieties.
Let
$G\subseteq \operatorname{Aut}(X)$
be a subgroup consisting of regular automorphisms satisfying
$Q\circ g = Q,
\text{ for all } g\in G$.
Let
$U\subseteq Y$
be the maximal Zariski-open subset over which \(Q\) is \'etale.
Then the induced action of \(G\) on any geometric fiber of the finite \'etale cover
$Q^{-1}(U)\longrightarrow U$
is faithful.
Therefore,
$G\hookrightarrow S_d$,
where \(d=\deg(Q)\).
\end{lemma}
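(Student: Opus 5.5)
The plan is to check two things: that each $g\in G$ preserves every fiber of $Q$, and that an automorphism of the irreducible variety $X$ fixing one full generic fiber pointwise must be the identity. The first is immediate from $Q\circ g=Q$. For $y\in U$ and $x\in Q^{-1}(y)$ we have $Q(gx)=Q(x)=y$, so $g$ maps $Q^{-1}(y)$ into itself. Since $g^{-1}\in G$ as well, $g$ restricts to a bijection of $Q^{-1}(y)$. The fiber has exactly $d=\deg(Q)$ points because $Q$ is finite \'etale of degree $d$ over $U$, and over the dense open $U$ this degree equals the generic degree. Thus we get a group homomorphism $\rho_y:G\to \operatorname{Sym}(Q^{-1}(y))\cong S_d$.

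Faithfulness is the real content, and I would first prove it for one $y\in U$ not depending on $g$. Take $g\in\ker\rho_y$. Consider the closed subset $\operatorname{Fix}(g)=\Delta^{-1}(\Gamma_g)\subseteq X$; it is closed because $X$ is separated. If $\operatorname{Fix}(g)\neq X$, its complement is a nonempty open set $V$. The main obstacle is to rule out the case where $g$ fixes the particular fiber $Q^{-1}(y)$ without being the identity. A single fiber has measure zero, so pointwise fixing of one fiber by itself gives nothing.

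To handle this I would use the \'etale structure instead of a pointwise count. Over $U$, the component $\Gamma_g|_{Q^{-1}(U)}$ is a closed and open subscheme of the finite \'etale cover $(X\times_Y X)|_U\to X|_{Q^{-1}(U)}$ via $p_1$, because $\Gamma_g\cong X$ is a section of $p_1$, and a section of a finite \'etale (hence separated unramified) morphism is open and closed. The diagonal $\Delta|_{Q^{-1}(U)}$ is another such clopen section. Two clopen sections over the connected scheme $Q^{-1}(U)$ (irreducible, as an open in the irreducible $X$) either coincide or are disjoint. If $g$ fixes a point $x\in Q^{-1}(U)$, the two sections meet at $(x,x)$, so $\Gamma_g=\Delta$ over $Q^{-1}(U)$. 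Then $g=\mathrm{id}$ on the dense open $Q^{-1}(U)$, and hence on $X$ by separatedness. In fact one fixed point over $U$ already suffices, which is stronger than what the lemma needs.

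This shows $\rho_y$ is injective for every geometric point $y\in U$, and therefore $G\hookrightarrow S_d$. One technical point to check is that $U$ is nonempty and that the fibers over $U$ have exactly $d$ points. This uses characteristic zero (generic smoothness) together with the standing assumption that $Q$ is finite dominant. In particular $G$ is finite of order dividing $d!$.
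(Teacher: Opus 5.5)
Your proposal is correct and follows the same route as the paper. Both arguments map $G$ to the permutations of a geometric fiber over the étale locus, kill the kernel by rigidity of automorphisms of the connected finite étale cover $Q^{-1}(U)\to U$, and then pass from the dense open $Q^{-1}(U)$ to $X$ by separatedness. The only difference is that the paper simply cites that rigidity property as standard, whereas you prove it, in the stronger form that a single fixed point over $U$ already forces $g=\mathrm{id}$, by observing that $\Gamma_g$ and $\Delta$ restrict to clopen sections of the finite étale map $p_1$ over the connected scheme $Q^{-1}(U)$.
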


\begin{proof}
Restricting \(Q\) to the \'etale locus gives a finite \'etale cover
$Q_U:Q^{-1}(U)\longrightarrow U$
of degree \(d\).
Every element \(g\in G\) satisfies
$Q_U\circ g = Q_U$,
and therefore determines an automorphism of the finite \'etale cover \(Q_U\).
Fix a geometric point
$\bar u\to U$.
The fiber
$F:=Q_U^{-1}(\bar u)$
consists of \(d\) points. Any automorphism of the cover induces a permutation of \(F\), yielding a homomorphism
$\rho:G\longrightarrow \operatorname{Perm}(F)\cong S_d$.
Suppose \(g\in\ker(\rho)\). Then \(g\) acts trivially on the fiber \(F\).
A standard property of finite \'etale covers states that an automorphism of a connected finite \'etale cover is uniquely determined by its action on a single geometric fiber. Hence \(g\) acts trivially on all of \(Q^{-1}(U)\).
Since \(Q^{-1}(U)\) is a dense open subset of \(X\) and \(g\) is a regular automorphism, it follows that
$g=\mathrm{id}_X$.
Therefore \(\ker(\rho)=\{1\}\), so \(\rho\) is injective. Hence
$G\hookrightarrow S_d$.
\end{proof}

The previous lemma shows that transport symmetries preserving a finite
morphism act faithfully on the generic \'etale fiber.
Hence, the transport group is necessarily finite.
This finiteness has a direct spectral consequence.
Finite-order transport symmetries give rise to finite-order transport
operators, forcing their spectra to consist of roots of unity.

\begin{theorem}[Finite-Monodromy Rigidity]
\label{thm:finite-monodromy-rigidity}
Let
$Q:X\to Y$
be a finite dominant morphism of degree \(d\) between irreducible
varieties.
Suppose that the coincidence correspondence contains irreducible
components defining global regular transport symmetries
$\tau_1,\ldots,\tau_r\in\operatorname{Aut}(X)$
satisfying
$Q\circ\tau_i=Q.$
Let
$G_{\mathrm{tr}}
=
\langle\tau_1,\ldots,\tau_r\rangle.$
The following statements hold.
\begin{enumerate}[label=\emph{(\roman*)}]
\item
The transport group acts faithfully on the generic \'etale fiber of \(Q\).
\item
There is an injective homomorphism
$G_{\mathrm{tr}}\hookrightarrow S_d$,
 where \(S_d\) denotes the symmetric group on \(d\) letters, a finite group of order \(d!\).
\item
The transport group \(G_{\mathrm{tr}}\) is finite.
\item
Every transport symmetry has finite order.
\item
For every regular transport root point \(x\) fixed by
\(\tau_i\) and \(\tau_j\), the transport operator
$R_i(x)R_j(x)$
has finite order.
Equivalently, every eigenvalue of
$R_i(x)R_j(x)$
is a root of unity.
\end{enumerate}
\end{theorem}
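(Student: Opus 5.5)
The plan is to derive everything from \Cref{lem:faithfulness-transport}, applied with $G=G_{\mathrm{tr}}$. Each generator $\tau_i$ is a regular automorphism of $X$ with $Q\circ\tau_i=Q$. Hence every element of $G_{\mathrm{tr}}$, being a word in the $\tau_i^{\pm1}$, is also a regular automorphism satisfying $Q\circ g=Q$. Thus $G_{\mathrm{tr}}$ is a subgroup of $\operatorname{Aut}(X)$ meeting the hypotheses of the lemma. The lemma then gives (i) and (ii) directly: the action on a geometric fiber of the finite \'etale cover $Q^{-1}(U)\to U$ is faithful, and the permutation representation $\rho\colon G_{\mathrm{tr}}\to S_d$ is injective.

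One point needs checking before the lemma applies: that $Q^{-1}(U)$ is connected, since the lemma invokes uniqueness of automorphisms of a connected finite \'etale cover. This holds because $X$ is irreducible and $Q^{-1}(U)$ is a nonempty open subset. It is nonempty because $Q$ is dominant and we are in characteristic zero, so generic smoothness makes $U$ nonempty and dense.

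Parts (iii) and (iv) follow immediately. The group $G_{\mathrm{tr}}$ embeds in a group of order $d!$, so $|G_{\mathrm{tr}}|\le d!$. Every element of a finite group has finite order dividing the group order, so each $\tau_i$ (and each composite transport symmetry) satisfies $\tau^{N}=\mathrm{id}$ with $N\mid d!$.

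For (v), let $x$ be a regular transport root point fixed by $\tau_i$ and $\tau_j$, so $x$ is also fixed by $g=\tau_i\tau_j$. Since $\tau_j(x)=x$, the chain rule gives
\[
d(\tau_i\tau_j)_x=(d\tau_i)_{x}(d\tau_j)_x=R_i(x)R_j(x).
\]
Let $N$ be the order of $g$ from (iv). Then $g^N=\mathrm{id}_X$, and since $x$ is fixed by $g$, differentiating at $x$ gives
\[
\bigl(R_i(x)R_j(x)\bigr)^N=d(g^N)_x=I.
\]
So $R_i(x)R_j(x)$ has finite order dividing $N$, and every eigenvalue $\lambda$ satisfies $\lambda^N=1$, making it a root of unity. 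Conversely, in characteristic zero a finite-order operator is semisimple, so the two formulations of (v) agree. The same argument, applied after reducing modulo $F_{ij}$ via \Cref{thm:Rank-Two-Reduction-Theorem}, shows that $\overline M_{ij}$ also has finite order. This is the form needed later to feed into \Cref{thm:Finite-Order-Spectral Classification}.

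I expect the main subtlety to be the identification of the \emph{local} transport symmetries $\tau_{D}$ with the \emph{global} automorphisms $\tau_i$. The hypothesis says the components define global regular symmetries, so by the uniqueness part of \Cref{thm:Local-Transport-Theorem} the germ of $\tau_i$ at $x$ coincides with $\tau_{D_i}$, and hence $R_i(x)=(d\tau_i)_x$. I would state this explicitly before differentiating. Everything else is formal once \Cref{lem:faithfulness-transport} is in hand.
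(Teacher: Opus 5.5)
Your proposal is correct and takes essentially the same route as the paper. The paper embeds $G_{\mathrm{tr}}$ into $S_d$ through its action on a generic \'etale fiber, re-running the argument of \Cref{lem:faithfulness-transport} inline rather than citing it. It then deduces (iii) and (iv), and differentiates $(\tau_i\tau_j)^{m_{ij}}=\mathrm{id}$ at the common fixed point $x$ to obtain $(R_i(x)R_j(x))^{m_{ij}}=I$. Your extra checks are sound and fill in points the paper leaves implicit: that $Q^{-1}(U)$ is nonempty and connected, and that the germ of the global $\tau_i$ agrees with $\tau_{D_i}$, so that $R_i(x)=(d\tau_i)_x$.
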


\begin{proof}
Let $U \subseteq Y$ be the maximal Zariski-open subset over which $Q$ is \'etale. Then 
\[
Q_U: Q^{-1}(U) \to U
\]
is a finite \'etale cover of degree $d$. Each transport symmetry preserves the fibers of $Q$ and restricts to an automorphism of this \'etale cover. Fix a geometric point $\bar{u} \to U$. The fiber $F = Q_U^{-1}(\bar{u})$ contains $d$ points. Every transport symmetry induces a permutation of $F$, yielding a homomorphism $\rho: G_{\mathrm{tr}} \to \operatorname{Perm}(F) \cong S_d$. By the faithfulness lemma for finite \'etale covers, an automorphism is uniquely determined by its action on a single geometric fiber, hence $\rho$ is injective. Thus, 
\[
G_{\mathrm{tr}} \hookrightarrow S_d.
\]
This proves (i) and (ii).

Since \(S_d\) is finite,
\(G_{\rm tr}\) is finite, establishing (iii).
Every element of a finite group has finite order,
which proves (iv).
Let $x$ be a regular transport root point fixed by
$\tau_i$ and $\tau_j$.
Since $G_{\mathrm{tr}}$ is finite, the element
$\tau_i\tau_j$ has finite order.
Hence there exists
$m_{ij}\ge1$
such that
\begin{equation} \label{eq:tau-ij}
    (\tau_i\tau_j)^{m_{ij}}
=
\mathrm{id}.
\end{equation}
Since $x$ is fixed by both $\tau_i$ and $\tau_j$,
\[
d(\tau_i\tau_j)_x
=
d\tau_i|_x\circ d\tau_j|_x
=
R_i(x)R_j(x).
\]
Upon differentiating \Cref{eq:tau-ij}
at $x$, we obtain
\begin{equation}
    (R_i(x)R_j(x))^{m_{ij}}
=
I.
\end{equation}
Therefore $R_i(x)R_j(x)$ has finite order, and every eigenvalue of
$R_i(x)R_j(x)$ is an $m_{ij}$-th root of unity.
This proves (v).
\end{proof}

\begin{corollary}[Finite Pairwise Order]
\label{cor:finite-pairwise-order}

Under the hypotheses of
\Cref{thm:finite-monodromy-rigidity},
for every pair of transport symmetries
\(\tau_i,\tau_j\)
there exists an integer
$m_{ij}\ge1$
such that
$(\tau_i\tau_j)^{m_{ij}}
=
\operatorname{id}$.
Hence, for every regular transport root point \(x\),
$\bigl(R_i(x)R_j(x)\bigr)^{m_{ij}}
=
I$.
\end{corollary}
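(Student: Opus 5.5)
The corollary follows directly from \Cref{thm:finite-monodromy-rigidity}, so the plan is to extract the two assertions from parts (iii) and (v) of that theorem.

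First, for a fixed pair $\tau_i,\tau_j$, I would note that the product $\tau_i\tau_j$ lies in the transport group $G_{\mathrm{tr}}=\langle\tau_1,\ldots,\tau_r\rangle$. By part (iii), $G_{\mathrm{tr}}$ is finite, since it embeds in $S_d$ by part (ii). Every element of a finite group has finite order, so there is an integer $m_{ij}\ge 1$ with $(\tau_i\tau_j)^{m_{ij}}=\mathrm{id}_X$. I would take $m_{ij}$ to be the order of $\tau_i\tau_j$; if uniformity over all pairs were wanted, $m_{ij}=|G_{\mathrm{tr}}|$ or even $d!$ would also work.

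Second, for the infinitesimal statement, I would take a regular transport root point $x$. As in the proof of part (v), the relevant $x$ must be fixed by both $\tau_i$ and $\tau_j$, and I would read the statement this way, since $x$ lies in $H_{D_i}\cap H_{D_j}$, which is where $R_i(x)R_j(x)$ is defined as an endomorphism of $T_xX$. By the fixed-locus corollary, points of $H_{D_i}\cap U$ are fixed by $\tau_i$ under the reflection-type hypotheses. More directly, $x\in H_{D_i}$ means $(x,x)\in D_i=\Gamma_{\tau_i}$, so $\tau_i(x)=x$ by the uniqueness part of the Local Transport Theorem (\Cref{thm:Local-Transport-Theorem}). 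The same argument gives $\tau_j(x)=x$.

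The chain rule then gives $d(\tau_i\tau_j)_x=R_i(x)R_j(x)$, and iterating it gives $d\bigl((\tau_i\tau_j)^{m_{ij}}\bigr)_x=(R_i(x)R_j(x))^{m_{ij}}$. Since $(\tau_i\tau_j)^{m_{ij}}=\mathrm{id}_X$ and the differential of the identity is $I$, we obtain $(R_i(x)R_j(x))^{m_{ij}}=I$.

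The only delicate step is the second one: confirming that $x$ is genuinely a common fixed point, so that the differentials compose as endomorphisms of the single space $T_xX$. I expect the identification of $D_i$ with the graph $\Gamma_{\tau_i}$ of a global automorphism, together with the definition $H_{D_i}=D_i\cap\Delta$, to settle this. The rest is a direct citation of \Cref{thm:finite-monodromy-rigidity}.
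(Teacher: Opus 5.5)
Your proposal is correct and is essentially the paper's argument. The paper gives the corollary no separate proof; its content is exactly the proof of part (v) of \Cref{thm:finite-monodromy-rigidity}, which takes $m_{ij}$ to be the order of $\tau_i\tau_j$ in the finite group $G_{\mathrm{tr}}$ and differentiates $(\tau_i\tau_j)^{m_{ij}}=\mathrm{id}$ at a common fixed point. Your reading of ``regular transport root point'' as a point of $H_{D_i}\cap H_{D_j}$ is right, and your derivation of $\tau_i(x)=\tau_j(x)=x$ directly from $D_i=\Gamma_{\tau_i}$ and $H_{D_i}=D_i\cap\Delta$ makes explicit the restriction that the paper imposes as a hypothesis in (v).
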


\begin{corollary}[Rank-Two Rigidity]
\label{cor:rank-two-rigidity}

Assume the hypotheses of
\Cref{thm:finite-monodromy-rigidity}.
Suppose additionally that the transport symmetries are involutive and
that the corresponding transport operators satisfy the hypotheses of
the Rank-Two Reduction Theorem.
Then the normalized transport invariant satisfies
$\Theta_{ij}
=
\cos^2\!\left(\frac{\pi k}{m_{ij}}\right)$
for some integer
$1\le k<m_{ij}$,
with 
$\gcd(k,m_{ij})=1$.
In particular, the invariant \(\Theta_{ij}\) belongs to a discrete
spectral set.
\end{corollary}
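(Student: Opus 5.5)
The plan is to combine the finiteness supplied by \Cref{thm:finite-monodromy-rigidity} with the spectral computation of \Cref{thm:Finite-Order-Spectral Classification}. The only real work is to pass from finiteness of the full transport product $R_i(x)R_j(x)$ on $T_xX$ to finiteness of the reduced operator $\overline M_{ij}$ on the quotient $T_xX/F_{ij}$.

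\textbf{Step 1: finite order upstairs.} Fix a regular transport root point $x\in H_{D_i}\cap H_{D_j}$ at which the hypotheses of the Rank-Two Reduction Theorem hold. By \Cref{cor:finite-pairwise-order} there is an integer $m\ge 1$ with
\[
\bigl(R_i(x)R_j(x)\bigr)^{m}=I .
\]
Since the $\tau_i$ are involutive, $R_i(x)^2=I$. Regularity gives $\operatorname{rank}(I-R_i(x))=1$, so the nontrivial eigenvalue of $R_i(x)$ is $\zeta_i=-1$, and likewise $\zeta_j=-1$.

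\textbf{Step 2: descent to the quotient.} By \Cref{thm:Rank-Two-Reduction-Theorem}, $M_{ij}=R_iR_j$ preserves $F_{ij}$ and induces $\overline M_{ij}$ on $T_xX/F_{ij}$. Passage to the quotient is multiplicative, so $\overline M_{ij}^{\,m}=\overline{M_{ij}^{\,m}}=I$. Hence $\overline M_{ij}$ has finite order $m_{ij}:=\operatorname{ord}(\overline M_{ij})$, which divides $m$. This is the integer appearing in the statement; I will interpret $m_{ij}$ as this reduced order.

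\textbf{Step 3: spectral computation.} Now all hypotheses of \Cref{thm:Finite-Order-Spectral Classification} are satisfied: $\zeta_i=\zeta_j=-1$ and $\overline M_{ij}$ has finite order. Its determinant is $\zeta_i\zeta_j=1$ by \Cref{thm:rank-two-characteristic-polynomial}, so the eigenvalues are $\lambda,\lambda^{-1}$, roots of unity of order dividing $m_{ij}$.

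\textbf{The main obstacle.} One must justify that $\lambda$ can be taken to be a \emph{primitive} $m_{ij}$-th root of unity, so that $\gcd(k,m_{ij})=1$. The argument I would give runs in two cases.
\begin{itemize}
\item If $\lambda\neq\lambda^{-1}$, then $\overline M_{ij}$ is diagonalizable. Its order is therefore the order of $\lambda$, and $\lambda$ is primitive.
\item If $\lambda=\lambda^{-1}$, then $\lambda=\pm1$, and $\overline M_{ij}$ is either diagonalizable (hence $\pm I$) or a nontrivial unipotent-type Jordan block. A Jordan block $\lambda I+N$ with $N\neq0$ satisfies $(\lambda I+N)^m=\lambda^m I+m\lambda^{m-1}N\neq I$ in characteristic zero, so it has infinite order, which Step 2 excludes. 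Thus $\overline M_{ij}=\pm I$, and the order is $1$ or $2$ with $\lambda$ primitive again.
\end{itemize}
In the degenerate cases $m_{ij}\in\{1,2\}$, the constraint $1\le k<m_{ij}$ should be checked against the trace formula directly. Since $\operatorname{tr}\overline M_{ij}=4\Theta_{ij}-2$, the value $\overline M_{ij}=-I$ gives $\Theta_{ij}=0=\cos^2(\pi/2)$, which fits with $k=1$. The value $\overline M_{ij}=I$ would give $\Theta_{ij}=1$; I expect to handle it by noting that $\overline R_i=\overline R_j^{-1}=\overline R_j$ contradicts transversality of the two walls. Indeed, $\overline R_i$ and $\overline R_j$ have distinct $(-1)$-eigenlines $[\nu_i]\neq[\nu_j]$.

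\textbf{Step 4: conclusion.} Writing $\lambda=e^{2\pi i k/m_{ij}}$, compare $\operatorname{tr}\overline M_{ij}=2\cos(2\pi k/m_{ij})$ with $4\Theta_{ij}-2$. This gives
\[
\Theta_{ij}=\cos^2\!\left(\frac{\pi k}{m_{ij}}\right).
\]
Since $m_{ij}\le m$ is bounded in terms of the finite group $G_{\mathrm{tr}}\hookrightarrow S_d$, the possible values of $\Theta_{ij}$ form a finite, hence discrete, set.
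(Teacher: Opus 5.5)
Your route is the one the paper intends. The corollary is stated without a separate proof and follows by feeding the finite order of $R_i(x)R_j(x)$ from \Cref{cor:finite-pairwise-order} into the Finite-Order Spectral Classification Theorem with $\zeta_i=\zeta_j=-1$. Your descent of $M_{ij}^m=I$ to $\overline M_{ij}$ and your Jordan-block case analysis for primitivity supply details that the paper's proof of the spectral classification simply asserts. Your bound on $m_{ij}$ in terms of $G_{\mathrm{tr}}\hookrightarrow S_d$ gives finiteness, not just discreteness.

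One justification is wrong, though. To exclude $\overline M_{ij}=I$, you claim that transversality gives distinct $(-1)$-eigenlines $[\nu_i]\neq[\nu_j]$. It does not. Use the coordinates $v\mapsto(\alpha_i(v),\alpha_j(v))$ on $T_xX/F_{ij}$. Then $[\nu_i]=(1,A_{ij})$ and $[\nu_j]=(A_{ji},1)$, which are proportional exactly when $\Theta_{ij}=1$. But $\Theta_{ij}=1$ is precisely what $\overline M_{ij}=I$ would force via $\operatorname{tr}\overline M_{ij}=4\Theta_{ij}-2$, so the argument is circular. Concretely, in \Cref{ex:transport-atlas-z2} at $z=1$, the walls $\{x=0\}$ and $\{y=0\}$ are transverse, yet $\nu_1=\nu_2=\partial_x+\partial_y$. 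The paper makes the same unjustified claim in the proof of \Cref{thm:rank-two-characteristic-polynomial}.

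The conclusion is still true; argue with the fixed lines instead. If $\overline M_{ij}=I$, then $\overline R_i=\overline R_j^{-1}=\overline R_j$. This common map fixes pointwise both $\ker\overline{\alpha}_i$ (the image of $T_xH_{D_i}$) and $\ker\overline{\alpha}_j$. Transversality makes these two distinct lines spanning the plane, so $\overline R_i=I$. That contradicts $\overline R_i[\nu_i]=-[\nu_i]$ with $[\nu_i]\neq 0$, which holds because $\alpha_i(\nu_i)=1$. Equivalently, in the coordinates above,
\[
\overline M_{ij}=\begin{pmatrix}-1 & 2A_{ji}\\ -2A_{ij} & 4\Theta_{ij}-1\end{pmatrix},
\]
whose $(1,1)$ entry is never $1$. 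This matrix also confirms the trace and determinant formulas you use, without relying on the basis $([\nu_i],[\nu_j])$, which degenerates when $\Theta_{ij}=1$.
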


\begin{remark}[Limitation of the Single-Morphism Framework]
The preceding theorem shows that when all primitive transport symmetries arise from a single finite morphism and extend to global regular automorphisms preserving that morphism, the associated rank-two transport invariants are constrained by finite monodromy and therefore belong to a discrete spectral set.

Therefore, within this framework a single finite morphism cannot produce continuously varying rank-two transport geometry. To obtain continuously varying transport invariants, one must enlarge the class of admissible transport structures, for example by allowing transport symmetries that do not extend to global automorphisms preserving a fixed finite morphism.
\end{remark}

\section{Transport Atlases and Local Finite Covers}
\label{sec:transport-atlases-and-Local-Finite-Covers}

The finite-monodromy rigidity theorem establishes a fundamental
limitation of the single-morphism framework. When all primitive
transport symmetries arise from the coincidence correspondence of a
single finite morphism
$Q:X\longrightarrow Y$,
the resulting transport group acts on the finite fibers of $Q$ and
therefore possesses finite monodromy. In the reflection-type setting,
this forces the rank-two transport invariants to belong to a discrete
spectral set.
It follows that, a single finite morphism cannot generate continuously
varying rank-two transport geometry. To obtain variable transport
invariants, one must enlarge the category of admissible transport
structures.

The key observation is that the rigidity theorem uses the existence of
a common finite morphism whose fibers are preserved by all transport
symmetries. If different transport walls are allowed to originate from
different finite covers, then no common transport monodromy group need
exist. The finite-monodromy argument therefore breaks down while the
local coincidence geometry remains intact.
This motivates the following notion.

\begin{definition}[Transport Atlas]
\label{def:Transport-Atlas}
Let $X$ be a smooth variety (or smooth analytic manifold).
A \emph{transport atlas} on $X$ consists of an open cover
$X=\bigcup_{\alpha\in A} U_\alpha$
together with finite morphisms
$Q_\alpha:U_\alpha\longrightarrow Y_\alpha$
such that primitive transport components are extracted locally from
the coincidence correspondences
$\mathcal C_{Q_\alpha}
=
U_\alpha\times_{Y_\alpha}U_\alpha$.
A primitive transport wall $H_i$ is said to be represented by
$Q_\alpha$ if the corresponding primitive coincidence component
arises from $\mathcal C_{Q_\alpha}$.
\end{definition}

We note that no compatibility conditions are imposed on overlaps, as 
the purpose of the atlas is merely to record the local
finite covers from which transport walls arise.
Further, unlike the single-morphism case, different transport walls are not
required to originate from the same finite cover.

\begin{remark}
In a transport atlas, two transport walls $H_i$ and $H_j$ meeting at a
point
$x\in H_i\cap H_j$
may arise from distinct finite morphisms
\[
Q_\alpha:U_\alpha\to Y_\alpha,
\qquad
Q_\beta:U_\beta\to Y_\beta.
\]
The corresponding primitive transport symmetries therefore need not
preserve a common finite fiber. Hence, there is generally no
transport monodromy group acting simultaneously on both walls. Nevertheless, all local transport constructions remain valid.
\end{remark}

\begin{remark}[Sheaf-Theoretic Interpretation]
A transport atlas may be viewed as a sheaf-like collection of local
finite covers.
Indeed, to each open subset $U\subseteq X$ one may associate the set
of finite morphisms
$Q:U\to Y$.
The transport atlas selects local sections of this collection whose
coincidence correspondences generate the primitive transport walls.
From this perspective, the single-morphism theory developed in the
preceding sections corresponds to the special case in which all
transport walls arise from a single global section. The finite-monodromy
rigidity theorem shows that this global situation is highly restrictive.
Transport atlases relax this condition by allowing different transport
walls to originate from different local finite covers.
\end{remark}

\begin{remark}[Future Directions]
The transport atlas formalism suggests a richer categorical framework
involving sheaves, groupoids, or stacks of finite covers. Such a
development lies beyond the scope of the present paper. The purpose of
the transport atlas introduced here is merely to provide a geometric
setting in which local coincidence data may be assembled without
requiring a common global finite morphism.
\end{remark}

\begin{proposition}[Local Survival of Transport Data]
\label{prop:atlas-local-data}
Let $(U_\alpha,Q_\alpha)$ be a chart of a transport atlas and let
$D_i\subseteq\mathcal C_{Q_\alpha}$ be a primitive coincidence
component.
Then all local transport constructions associated with $D_i$
remain well-defined on the regular transport locus, including:

\begin{enumerate}[label=(\roman*)]
\item the transport symmetry $\tau_i$;
\item the root wall $H_i$;
\item the transport root covector $\alpha_i$;
\item the transport vector $\nu_i$;
\item the Cartan coefficients
$A_{ij}
=
\alpha_j(\nu_i)$
whenever the relevant transport data are simultaneously defined.
\end{enumerate}
\end{proposition}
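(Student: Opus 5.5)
The plan is to check that every construction in items (i)--(v) depends only on the restriction of the data to a neighborhood of the relevant point. Such data is supplied verbatim by the chart $(U_\alpha,Q_\alpha)$. Concretely, $U_\alpha$ is an open subset of the smooth variety $X$, so it is itself a smooth variety. The morphism $Q_\alpha:U_\alpha\to Y_\alpha$ is finite, and $D_i$ is an admissible component of $\mathcal C_{Q_\alpha}=U_\alpha\times_{Y_\alpha}U_\alpha$. Hence all hypotheses of the Local Transport Theorem (\Cref{thm:Local-Transport-Theorem}) hold with $(Q,X,Y)$ replaced by $(Q_\alpha,U_\alpha,Y_\alpha)$. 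No global property of $Q$ (such as a common fiber or a monodromy group) is used in any of the earlier definitions.

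For (i), apply \Cref{thm:Local-Transport-Theorem} at a regular point of $D_i$ where $dp_1$ is an isomorphism. This gives $\tau_i=p_2\circ(p_1|_W)^{-1}$ on a neighborhood in $U_\alpha\subseteq X$, unique and satisfying $Q_\alpha\circ\tau_i=Q_\alpha$. For (ii), define $H_i=D_i\cap\Delta_{U_\alpha}$. Note that $\Delta_{U_\alpha}=\Delta_X\cap(U_\alpha\times U_\alpha)$, so $H_i$ is a locally closed subvariety of $X$. It is smooth at transverse points by \Cref{prop:wall-regularity}. For (iii) and (iv), at a regular transport root point $x\in H_i$, the operator $R_i(x)=(d\tau_i)_x$ acts on $T_xU_\alpha=T_xX$. \Cref{prop:root-wall-fixed-eigenspace} gives $\ker(I-R_i(x))=T_xH_i$, of codimension one. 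Therefore $L_i(x)=\operatorname{Ann}(\ker(I-R_i(x)))\subset T_x^*X$ and $C_i(x)=\operatorname{Im}(I-R_i(x))$ are lines. A root $\alpha_i$ and a normalized $\nu_i$ with $\alpha_i(\nu_i)=1$ can then be chosen exactly as in \Cref{def:Transport-Cartan-Coefficients}. The normalization is possible because, on the semisimple or regular locus, $C_i(x)\not\subset\ker(I-R_i(x))$.

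For (v), suppose $D_j$ comes from a possibly different chart $(U_\beta,Q_\beta)$ and $x\in H_i\cap H_j\subseteq U_\alpha\cap U_\beta$. Then $\alpha_j(x)\in T_x^*X$ and $\nu_i(x)\in T_xX$ live in the same fiber of the tangent bundle of $X$. So $A_{ij}(x)=\alpha_j(x)(\nu_i(x))$ is defined without any comparison between $Q_\alpha$ and $Q_\beta$. \Cref{prop:Transformation-Law} applies verbatim, so $\Theta_{ij}$ is independent of the choices. This is the phrase ``whenever the relevant transport data are simultaneously defined.''

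The only point needing care, and what I expect to be the main obstacle, is the independence of the local data from the choice of chart. If $D_i$ were represented in two charts, or the neighborhoods were shrunk, one must check that $\tau_i$ and hence $R_i$, $L_i$, $C_i$ do not change. I would handle this with the uniqueness clause of \Cref{thm:Local-Transport-Theorem}: on any common neighborhood the two transport maps have graphs in the same local branch of the component, so they agree. Consequently their derivatives and all the linear-algebraic data derived from them also agree. Beyond this, the argument is a direct transcription of the single-morphism proofs, since the constructions involve no global finiteness.
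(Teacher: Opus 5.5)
Your proposal is correct and is essentially the paper's argument: the paper notes that each chart $(U_\alpha,Q_\alpha)$ is an ordinary finite-morphism coincidence geometry and that $\tau_i$, $H_i$, $\alpha_i$, $\nu_i$, $A_{ij}$ are defined locally, with the mixed-chart pairing $\alpha_j(x)(\nu_i(x))$ treated exactly as you do (cf.\ \Cref{prop:mixed-cartan}), and you have simply spelled out each item. The chart-independence check you single out as the main obstacle is not needed for the statement, and it holds only when two charts yield the same component germ in $X\times X$: since no compatibility on overlaps is imposed, the same wall $\{x=0\}\subset\mathbb A^2$ arises from $Q_\alpha=(x^2,y)$ and $Q_\beta=(x^2,y-x)$ with $\nu=\partial_x$ and $\nu=\partial_x+\partial_y$ respectively, so the transport data belong to the component supplied by a chart, not to the wall alone.
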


\begin{proof}
Each chart $(U_\alpha,Q_\alpha)$ is an ordinary finite-morphism
coincidence geometry. The constructions developed in the preceding
sections are therefore valid on every chart individually. Since the
definitions of $\tau_i$, $H_i$, $\alpha_i$, $\nu_i$, and $A_{ij}$
are local, they remain meaningful whenever the corresponding
transport data are simultaneously defined.
\end{proof}

The essential difference between a transport atlas and a single finite
morphism is that the primitive transport symmetries need not belong to
a common finite group.

\begin{proposition}[Failure of Finite-Monodromy Rigidity]
\label{prop:atlas-rigidity-failure}
The finite-monodromy rigidity theorem does not extend to general
transport atlases.
\end{proposition}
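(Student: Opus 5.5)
The plan is to prove the proposition by an explicit counterexample: a transport atlas on $X=\mathbb A^2$ (over $k=\mathbb C$) with two charts. Each chart will contribute one involutive primitive transport symmetry whose walls meet transversally at a point $x$. The rank-two invariant $\Theta_{12}(x)$ should either fail to be of the form $\cos^2(\pi k/m)$, or vary continuously in a parameter. Either outcome contradicts the conclusion of \Cref{cor:rank-two-rigidity}. It also contradicts part (v) of \Cref{thm:finite-monodromy-rigidity}, which is the statement that ``does not extend''.

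\textbf{Construction.} For the first chart, take $U_1=\mathbb A^2$ and $Q_1(u,v)=(u^2,v)$. By \Cref{thm:galois-reconstruction} this is a Galois cover with deck group $\{e,s_1\}$, where $s_1(u,v)=(-u,v)$. Its unique coincidence component is $\Gamma_{s_1}$, the root wall is $H_1=\{u=0\}$, and $R_1=\operatorname{diag}(-1,1)$.

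For the second chart, fix a parameter $c\in k$ and take the linear involution
\[
s_2(v)=v-\beta_2(v)\,\nu_2,\qquad \beta_2(u,v)=u\cos\theta\cdot 2+\dots,
\]
more simply any reflection with $\beta_2(\nu_2)=2$ and wall $\{\beta_2=0\}$ transverse to $\{u=0\}$. Let $Q_2\colon \mathbb A^2\to\mathbb A^2/\langle s_2\rangle\cong\mathbb A^2$ be the quotient map. Again by \Cref{thm:galois-reconstruction}, $\tau_{D_2}=s_2$.

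Choose $\nu_1=(1,0)$ and $\alpha_1=du$, and normalize $\alpha_2=\beta_2/2$. As in the proof of \Cref{prop:classical-cartan-compatibility}, we get $A_{12}=\alpha_2(\nu_1)$ and $A_{21}=\alpha_1(\nu_2)$. Picking $\nu_2=(1,c)$ and $\alpha_2=\tfrac12(2u-\dots)$ appropriately, one arranges $\Theta_{12}=A_{12}A_{21}$ to be an arbitrary scalar $t$. For instance, take $\nu_2=(t,1)$ and $\alpha_2=du+(1-t)\,dv$, so that $\alpha_2(\nu_2)=1$, $A_{12}=1$, $A_{21}=t$, and hence $\Theta_{12}=t$. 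The hypotheses of \Cref{thm:Rank-Two-Reduction-Theorem} are then checked at $x=0$: the walls $\{u=0\}$ and $\{u+(1-t)v=0\}$ are transverse provided $t\neq 1$, and each point is regular because $\operatorname{rank}(I-R_i)=1$.

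\textbf{Conclusion.} By \Cref{thm:rank-two-characteristic-polynomial} with $\zeta_1=\zeta_2=-1$, the reduced operator satisfies $\operatorname{tr}\overline M_{12}=4t-2$.

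Taking $t>1$, or $t$ not of the form $\cos^2(\pi k/m)$ (for example $t=2$ or $t=1/3$), \Cref{thm:Finite-Order-Spectral Classification} shows that $\overline M_{12}$, and hence $R_1R_2$, has infinite order. This contradicts part (v) of \Cref{thm:finite-monodromy-rigidity}. Moreover, $\Theta_{12}$ ranges over a continuum as $t$ varies, so the discrete-spectrum conclusion of \Cref{cor:rank-two-rigidity} also fails. Note also that $\langle s_1,s_2\rangle\subset GL_2$ is then infinite, so no single finite morphism has both as fiber-preserving automorphisms, by \Cref{lem:faithfulness-transport}. This is consistent with the atlas setting.

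\textbf{Main obstacle.} The delicate point is checking that the linear data actually realize $\Theta_{12}=t$ with the correct normalizations. Concretely, $\nu_2$ must lie in $\operatorname{Im}(I-R_2)$ and $\alpha_2$ must annihilate $\ker(I-R_2)$; for $R_2=I-2\nu_2\otimes\alpha_2$ this holds automatically. One must also confirm that each $Q_\alpha$ is a legitimate finite morphism of smooth varieties whose coincidence component is admissible. For a reflection quotient of $\mathbb A^2$ this follows from Chevalley's theorem, as invoked in \Cref{cor:Shephard--Todd-Recovery}.
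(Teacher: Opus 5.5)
Your argument is correct, but it takes a different route from the paper. The paper's proof is structural. It observes that the rigidity theorem rests on a single finite morphism whose fibers are preserved by every $\tau_i$, which is what produces $G_{\mathrm{tr}}\hookrightarrow S_d$. In an atlas the walls may come from different $Q_\alpha$, so ``the finite-monodromy argument cannot be formed.'' Read literally, that shows only that the proof breaks down, not that the conclusion can fail. An actual witness appears only afterwards, in \Cref{ex:transport-atlas-z2} and \Cref{thm:example-non-constant-cartan-matrix}, where $\Theta_{12}(z)=z^2$ along $\Sigma=\{x=y=0\}\subset\mathbb A^3$.

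You instead build the witness into the proof. You use two linear reflection quotients of $\mathbb A^2$, with $R_1=\operatorname{diag}(-1,1)$ and $R_2=I-2\nu_2\otimes\alpha_2$, where $\nu_2=(t,1)$ and $\alpha_2=du+(1-t)\,dv$. You do not even need the finite-order spectral classification, since directly
\[
R_1R_2=\begin{pmatrix}2t-1 & 2t(1-t)\\ -2 & 2t-1\end{pmatrix}
\]
has trace $4t-2$ and determinant $1$. At $t=2$ its eigenvalues are $3\pm2\sqrt2$, so part (v) of \Cref{thm:finite-monodromy-rigidity} fails outright, even though each $\tau_i$ is a global linear automorphism preserving its own finite Galois cover. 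Choosing $t=2$ also spares you from justifying that $1/3$ is not of the form $\cos^2(\pi k/m)$.

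What your route buys is a genuine disproof with minimal machinery. It also makes the sharper point that failure requires nothing beyond the absence of a common cover. What the paper's example buys in addition is variation of $\Theta_{12}$ along a curve inside a single atlas. That is what the non-local-constancy criterion of \Cref{thm:linearization-obstruction} later uses. Your $\Theta_{12}=t$ is a constant at an isolated point, and the ``continuum'' you mention exists only across a family of atlases.

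Finally, delete the false starts in the chart-2 paragraph: $\beta_2=u\cos\theta\cdot 2+\dots$, $\nu_2=(1,c)$, and $\alpha_2=\tfrac12(2u-\dots)$. Define $s_2:=I-2\nu_2\otimes\alpha_2$ explicitly there instead of only in the closing paragraph.
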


\begin{proof}
The proof of the finite-monodromy rigidity theorem relies on the
existence of a common finite morphism
$Q:X\to Y$
whose primitive transport symmetries preserve the same finite fibers.
This yields an embedding of the transport group into a finite
permutation group.
In a transport atlas, primitive transport walls may arise from
distinct finite morphisms. Their transport symmetries therefore need
not act on a common fiber, and no global transport group acting on a
finite set is available. The finite-monodromy argument cannot be
formed.
\end{proof}

The resulting flexibility permits continuously varying transport
invariants.

\begin{definition}[Regular Transport Intersection Point]
\label{def:Regular-Transport-Intersection-Point}
Let $H_i$ and $H_j$ be transport walls represented by charts
$(U_i,Q_i)$ and 
$(U_j,Q_j)$, respectively.
A point
$x\in H_i\cap H_j$
is called a \emph{regular transport intersection point} if
\begin{enumerate}
\item[(i)]
$x\in U_i\cap U_j$;
\item[(ii)]
$x$ is a regular transport root point for both $H_i$ and $H_j$.
\end{enumerate}
\end{definition}

\begin{proposition}
\label{prop:mixed-cartan}
Let $H_i$ and $H_j$ be regular transport walls arising from
(possibly distinct) charts of a transport atlas on $X$.
Let
$x\in H_i\cap H_j$
be regular transport intersection point.
Then the Cartan coefficient
$A_{ij}(x)
=
\alpha_j(\nu_i)$
is well-defined.
\end{proposition}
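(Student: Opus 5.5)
The plan is to reduce the statement to the local constructions on each chart separately, using \Cref{prop:atlas-local-data}. Since $x$ is a regular transport intersection point, condition (i) of \Cref{def:Regular-Transport-Intersection-Point} gives $x\in U_i\cap U_j$. The chart $(U_i,Q_i)$ is an ordinary finite-morphism coincidence geometry, so the Local Transport Theorem (\Cref{thm:Local-Transport-Theorem}) produces $\tau_i$ near $x$. Its differential $R_i(x)=(d\tau_i)_x$ is an endomorphism of $T_xX$; here I use that $U_i\subseteq X$ is open, so $T_xU_i=T_xX$ canonically. The same construction in the chart $(U_j,Q_j)$ gives $\tau_j$ and an endomorphism $R_j(x)$ of $T_xX$. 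The key point is that both operators act on the same vector space $T_xX$, even though they come from different finite morphisms.

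Next, condition (ii) says $x$ is a regular transport root point for both walls, so $\operatorname{rank}(I-R_i(x))=\operatorname{rank}(I-R_j(x))=1$. It follows that the root line $L_i(x)=\operatorname{Ann}\ker(I-R_i(x))\subset T_x^*X$ and the coroot line $C_i(x)=\operatorname{Im}(I-R_i(x))\subset T_xX$ are both one-dimensional, and likewise for $j$. I would then choose $\alpha_j\in L_j(x)$ and $\nu_i\in C_i(x)$ with the normalizations $\alpha_i(\nu_i)=1=\alpha_j(\nu_j)$. Since $\alpha_j\in T_x^*X$ and $\nu_i\in T_xX$ sit over the same point, the pairing $A_{ij}(x)=\alpha_j(\nu_i)\in k$ makes sense. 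This uses no identification between the two target spaces $Y_i,Y_j$ and no overlap compatibility.

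The main obstacle is that each normalization must actually be possible, i.e.\ that $\alpha_i(\nu_i)\neq0$; equivalently, $C_i(x)\not\subset\ker(I-R_i(x))$. I would try the semisimplicity argument first. If $R_i(x)$ has finite order, as in the reflection-type case of \Cref{prop:Reflection-Type Transport-Components}, then $T_xX=\ker\oplus\operatorname{Im}$, as in \Cref{lem:rank-one-reflection-formula}. If finite order is not available, I would instead assume, or extract from the regularity convention of \Cref{remark:Generic-Regularity-Convention}, that the transverse eigenvalue $\zeta_i\neq1$. In that case $I-R_i$ is not nilpotent, so its image is not contained in its kernel.

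Finally, I would record the sense in which the coefficient is well-defined. By \Cref{prop:Transformation-Law}, $A_{ij}$ depends on the chosen data only up to $\lambda_i\lambda_j^{-1}$, while $\Theta_{ij}$ is independent of all choices. I would also note that $R_i(x)$ is independent of the chart representing $H_i$, because $\tau_i$ is unique by the Local Transport Theorem.
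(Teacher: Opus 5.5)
Your argument is essentially the paper's: each chart is an open subset of the common $X$, so $\nu_i(x)\in T_xX$ and $\alpha_j(x)\in T_x^*X$, though extracted from different finite morphisms, lie in the same spaces and the pairing makes sense without any overlap compatibility; your extra check that the normalization $\alpha_i(\nu_i)=1$ is possible (it fails exactly when $I-R_i(x)$ is nilpotent, i.e.\ for a transvection with $\zeta_i=1$) fills a step the paper leaves to its regularity convention. One caution on your closing aside: uniqueness in \Cref{thm:Local-Transport-Theorem} holds for a fixed component $D_i$, so $R_i(x)$ is chart-independent only when the charts produce the same component germ at $(x,x)$, not merely the same wall --- e.g.\ $(x,y)\mapsto(x^2,y)$ and $(x,y)\mapsto(x^2,y+x)$ on $\mathbb A^2$ both have wall $\{x=0\}$ but coroot lines spanned by $\partial_x$ and $\partial_x-\partial_y$ --- although your proof never actually uses that remark.
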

\begin{proof}
Since $x$ is a regular transport root point for $H_i$, the local
transport data associated with $H_i$ determine a distinguished
transport vector
$\nu_i(x)\in T_xX$
and a normalized transport root covector
$\alpha_i(x)\in T_x^*X$.
Similarly, since $x$ is a regular transport root point for $H_j$,
the local transport data associated with $H_j$ determine a
distinguished transport vector
$\nu_j(x)\in T_xX$
and a normalized transport root covector
$\alpha_j(x)\in T_x^*X$.
Because the transport atlas is defined on the common ambient variety
(or analytic manifold) $X$, all of these objects belong to the same
tangent and cotangent spaces at the point $x$. Therefore the natural
pairing

$\alpha_j(x)(\nu_i(x))$
is well-defined.
Therefore,
$A_{ij}(x)
=
\alpha_j(\nu_i)$
is well-defined. The construction depends only on the local transport
data at $x$ and does not require the walls $H_i$ and $H_j$ to arise
from the same finite morphism.
\end{proof}

\begin{corollary}
\label{cor:mixed-theta}
Let $H_i$ and $H_j$ be transport walls arising from
possibly distinct charts of a transport atlas.
Let
$x\in H_i\cap H_j$
be a regular transport root point for both walls.
Then
$\Theta_{ij}
=
A_{ij}A_{ji}$
is well-defined at $x$.
\end{corollary}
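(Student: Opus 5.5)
This follows directly from \Cref{prop:mixed-cartan}, applied twice, together with \Cref{prop:Transformation-Law}. First I would check that the hypotheses of \Cref{cor:mixed-theta} make $x$ a regular transport intersection point in the sense of \Cref{def:Regular-Transport-Intersection-Point}. Condition (ii) is assumed outright. Condition (i), $x\in U_i\cap U_j$, holds because $x\in H_i$ and $H_i$ is defined as a subvariety of the chart domain $U_i$ in which it is represented; the same argument gives $x\in U_j$.

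Next I apply \Cref{prop:mixed-cartan} to the ordered pair $(H_i,H_j)$. This shows $A_{ij}(x)=\alpha_j(x)(\nu_i(x))$ is well-defined. The proposition is symmetric in the two walls, so applying it to $(H_j,H_i)$ shows $A_{ji}(x)=\alpha_i(x)(\nu_j(x))$ is well-defined. All four objects $\alpha_i,\alpha_j,\nu_i,\nu_j$ live in the common spaces $T_xX$ and $T_x^*X$. Therefore the product $\Theta_{ij}(x)=A_{ij}(x)A_{ji}(x)$ is a well-defined scalar once normalized transport root data have been chosen.

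The main point, and the only real obstacle, is that ``well-defined'' should mean independent of that choice. At a regular transport root point the relevant spaces are all one-dimensional:
\begin{itemize}
\item $L_i(x)$ and $C_i(x)=\operatorname{Im}(I-R_i(x))$ are lines, where $R_i$ is the differential of $\tau_i$ computed from the chart $Q_i$;
\item $L_j(x)$ and $C_j(x)$ are lines, where $R_j$ is computed from the chart $Q_j$.
\end{itemize}
Hence any other normalized datum has the form $(\lambda_i^{-1}\alpha_i,\lambda_i\nu_i)$ for a scalar $\lambda_i\neq0$, and the normalization $\alpha_i(\nu_i)=1$ is preserved. The same holds for index $j$ with a scalar $\lambda_j\neq0$. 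The computation in \Cref{prop:Transformation-Law} is purely pointwise linear algebra and never uses that $D_i$ and $D_j$ come from one finite morphism. It therefore applies verbatim: $A_{ij}\mapsto\lambda_i\lambda_j^{-1}A_{ij}$ and $A_{ji}\mapsto\lambda_j\lambda_i^{-1}A_{ji}$, so $\Theta_{ij}$ is unchanged.

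Finally, each operator $R_i$ is intrinsic to its own chart, since $\tau_i$ is uniquely determined by $D_i$ through \Cref{thm:Local-Transport-Theorem}. So no choice beyond the rescalings above enters the definition, and $\Theta_{ij}$ is intrinsic at $x$.
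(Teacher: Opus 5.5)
Your proposal is correct and follows the paper's argument: apply \Cref{prop:mixed-cartan} to both orderings $(H_i,H_j)$ and $(H_j,H_i)$, then multiply the two coefficients. Your additional checks are sound and make explicit what the paper's one-line proof leaves implicit. The first is that $x\in U_i\cap U_j$, so that \Cref{prop:mixed-cartan} genuinely applies. The second is that the pointwise computation of \Cref{prop:Transformation-Law} makes $\Theta_{ij}(x)$ independent of the choice of normalized root data, even when the two walls come from different charts.
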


\begin{proof}
By \Cref{prop:mixed-cartan},
both Cartan coefficients
$A_{ij}=\alpha_j(\nu_i)$ and $A_{ji}=\alpha_i(\nu_j)$
are well-defined. Their product therefore defines a well-defined
scalar function on the regular root point locus.
\end{proof}

\subsection{A Variable Transport Invariant}
\label{sec:atlas-variable-example}

The finite-monodromy rigidity theorem shows that a single finite
morphism cannot generate continuously varying rank-two transport
invariants. We now show that such behavior naturally appears in a
transport atlas.

\begin{example}[A Variable Rank--Two Transport Invariant]
\label{ex:transport-atlas-z2}
Let
$X=\mathbb A^3$
with coordinates $(x,y,z)$.
Consider the finite morphism
$Q_1:X\longrightarrow\mathbb A^3$,
$Q_1(x,y,z)
=
(x^2,\; y-zx,\; z)$.
The coincidence correspondence
$\mathcal C_{Q_1}
=
X\times_{\mathbb A^3}X
$
is defined by
$$
x_1^2=x_2^2,
\qquad
y_1-zx_1=y_2-zx_2,
\qquad
z_1=z_2.$$
Since
$x_1^2-x_2^2=(x_1-x_2)(x_1+x_2)$,
the coincidence correspondence has two irreducible components:
the diagonal component and the component
$D_1=
\{
x_2=-x_1,\;
y_2=y_1-2zx_1,\;
z_2=z_1
\}$.
Thus $D_1$ is the graph of the involution
$\tau_1(x,y,z)
=
(-x,\;y-2zx,\;z)$.
 Indeed,
$
Q_1(\tau_1(x,y,z))
=
\bigl(
(-x)^2,\,
(y-2zx)-z(-x),\,
z
\bigr)
=
Q_1(x,y,z)$
The associated root wall is
$H_1
=
D_1\cap\Delta_X$.
Imposing
$(x,y,z)=(-x,y-2zx,z)$
gives
$x=0$.
Hence
the fixed-point locus of $\tau_1$ is
$H_1=\{x=0\}$.

Similarly, consider the finite morphism
$Q_2:X\longrightarrow\mathbb A^3$,
$Q_2(x,y,z)
=
(x-zy,\; y^2,\; z)$.
The coincidence correspondence of $Q_2$ is determined by
$$x_1-zy_1=x_2-zy_2,
\qquad
y_1^2=y_2^2,
\qquad
z_1=z_2.$$
Since
$y_1^2-y_2^2=(y_1-y_2)(y_1+y_2)$,
the nontrivial coincidence component is
$D_2=
\{
y_2=-y_1,\;
x_2=x_1-2zy_1,\;
z_2=z_1
\}$.
Thus $D_2$ is the graph of
$\tau_2(x,y,z)
=
(x-2zy,\;-y,\;z)$.
Its root wall is
$H_2
=
D_2\cap\Delta_X$.
Imposing
$(x,y,z)=(x-2zy,-y,z)$
gives
$y=0$.
Hence
$H_2=\{y=0\}$.
Its coincidence correspondence contains the involution
$\tau_2(x,y,z)
=
(x-2zy,\; -y,\; z)$,
since
$Q_2(\tau_2(x,y,z))
=
\bigl(
(x-2zy)-z(-y),\,
(-y)^2,\,
z
\bigr)
=
Q_2(x,y,z)$.
The fixed-point locus of $\tau_2$ is
$H_2=\{y=0\}$.
Thus $Q_1$ and $Q_2$ define a transport atlas on $X$ whose primitive
transport walls are $H_1$ and $H_2$.
These walls intersect transversely along
$\Sigma
=
H_1\cap H_2
=
\{x=y=0\}$.
For both transport components, the $(-1)$-eigenspace of the
corresponding transport operator is one-dimensional at every point of
$\Sigma$. 

For a point
$p=(0,0,z)\in\Sigma$,
the tangent spaces of the two walls are
\[
T_pH_1
=
\operatorname{span}\{\partial_y,\partial_z\},
\qquad
T_pH_2
=
\operatorname{span}\{\partial_x,\partial_z\}.
\]
Hence
\[
F_{12}
=
T_pH_1\cap T_pH_2
=
\operatorname{span}\{\partial_z\}.
\]
Since both transport symmetries fix the $z$-coordinate, the direction
$\partial_z$ is fixed by the corresponding transport operators.
Therefore, by the Rank--Two Reduction Theorem
(\Cref{thm:Rank-Two-Reduction-Theorem}),
the transport geometry is determined by the induced operators on the
two-dimensional quotient
$T_pX/F_{12}$,
which may be identified with the normal $(x,y)$-plane.
With respect to the basis
$(\partial_x,\partial_y)$
of this quotient, the reduced transport operators are
\[
M_1(z)
=
d\tau_1
=
\begin{pmatrix}
-1 & 0\\
-2z & 1
\end{pmatrix},
\qquad
M_2(z)
=
d\tau_2
=
\begin{pmatrix}
1 & -2z\\
0 & -1
\end{pmatrix}.
\]

A direct computation shows that
$M_1(z)^2=M_2(z)^2=I$,
so both transport operators are involutive.
The $(-1)$-eigenspace of $M_1(z)$ is generated by
$\nu_1
=
\partial_x+z\partial_y$.
Since
$H_1=\{x=0\}$,
its normal covector is $dx$, and the normalization condition
$\alpha_1(\nu_1)=1$
gives
$\alpha_1=dx$.
Similarly, the $(-1)$-eigenspace of $M_2(z)$ is generated by
$\nu_2
=
z\partial_x+\partial_y$.
Since
$H_2=\{y=0\}$,
the normalized root covector is
$\alpha_2=dy$.
Therefore
$A_{12}(z)
=
\alpha_2(\nu_1)
=
dy(\partial_x+z\partial_y)
=
z$,
and
$A_{21}(z)
=
\alpha_1(\nu_2)
=
dx(z\partial_x+\partial_y)
=
z$.
Therefore,
$\Theta_{12}(z)
=
A_{12}(z)A_{21}(z)
=
z^2$.
Thus the rank-two transport invariant varies continuously along the
intersection curve $\Sigma$.
\end{example}

\begin{theorem}
\label{thm:example-non-constant-cartan-matrix}
Assume that $z \in\mathbb R$.
The transport atlas of \Cref{ex:transport-atlas-z2}
possesses a nonconstant rank-two transport invariant
$\Theta_{12}(z)=z^2$.
In particular, the transport geometry passes continuously through the
elliptic, parabolic, and hyperbolic regimes of the Rank--Two
Classification Theorem (\Cref{thm:rank-two-classification}).
\end{theorem}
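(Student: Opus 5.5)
The plan is to read off the conclusion from the computation already carried out in \Cref{ex:transport-atlas-z2} and feed it into \Cref{thm:rank-two-classification}. I will proceed in three steps.

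\textbf{Step 1: the hypotheses of the classification theorem hold at every point $p=(0,0,z)\in\Sigma$.}
\begin{itemize}
\item By \Cref{cor:mixed-theta}, $\Theta_{12}(p)$ is well-defined, because $p$ is a regular transport intersection point in the sense of \Cref{def:Regular-Transport-Intersection-Point}.
\item Each $M_k(z)$ satisfies $M_k(z)^2=I$ and $M_k(z)\neq I$, so $\operatorname{rank}(I-M_k(z))=1$.
\item The nontrivial eigenvalues are $\zeta_1=\zeta_2=-1$.
\item The walls $H_1=\{x=0\}$ and $H_2=\{y=0\}$ are smooth and meet transversely, since $dx$ and $dy$ are independent.
\end{itemize}
Hence the Rank--Two Reduction Theorem (\Cref{thm:Rank-Two-Reduction-Theorem}) and \Cref{thm:rank-two-characteristic-polynomial} apply, with the explicit root data $\nu_1,\nu_2,\alpha_1,\alpha_2$ given in the example.

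\textbf{Step 2: nonconstancy, with an independent check.} The example gives $A_{12}=A_{21}=z$, so $\Theta_{12}(z)=z^2$, which is visibly nonconstant in $z$. As a cross-check I will multiply the reduced operators directly:
\[
\overline M_{12}=M_1(z)M_2(z)=\begin{pmatrix}-1&2z\\-2z&4z^2-1\end{pmatrix}.
\]
This matrix has trace $4z^2-2$ and determinant $1$. Both agree with the characteristic polynomial $t^2-(4\Theta_{12}-2)t+1$ at $\Theta_{12}=z^2$. Since $\Theta_{12}$ is independent of the normalization choices by \Cref{prop:Transformation-Law}, the nonconstancy is intrinsic and not an artifact of the chosen root data.

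\textbf{Step 3: case split on $z\in\mathbb R$ via \Cref{thm:rank-two-classification}.} The discriminant is $\Delta=16z^2(z^2-1)$, which gives the following regimes.
\begin{itemize}
\item $0<|z|<1$: here $0<\Theta_{12}<1$ and $\Delta<0$, so the geometry is elliptic.
\item $|z|=1$: here $\Theta_{12}=1$ and $\Delta=0$, so the geometry is parabolic (affine).
\item $z=0$: here $\Theta_{12}=0$, so the geometry is the degenerate parabolic case.
\item $|z|>1$: here $\Theta_{12}>1$ and $\Delta>0$, so the geometry is hyperbolic.
\end{itemize}
Because $z\mapsto z^2$ is continuous on the connected line $\Sigma\cap\mathbb R^3$, the transition is continuous. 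Concretely, the path $z\in[0,2]$ runs elliptic $\to$ parabolic at $z=1$ $\to$ hyperbolic.

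\textbf{Expected obstacle.} The main difficulty is bookkeeping rather than mathematics. First, the hyperbolic regime is reached only through $\Theta_{12}>1$; the branch $\Theta_{12}<0$ never occurs for real $z$. Second, the degenerate point $z=0$ must be listed separately, since there $\nu_1,\nu_2$ are orthogonal to the opposite roots. Neither point obstructs the statement, which asks only that all three regimes be attained.
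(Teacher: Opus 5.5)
Your proposal is correct and follows the paper's route: you read off $\Theta_{12}(z)=z^2$ from \Cref{ex:transport-atlas-z2} and sort real $z$ by \Cref{thm:rank-two-classification}. Your separate treatment of $z=0$ is in fact more careful than the paper's proof, which asserts $0<\Theta_{12}(z)<1\iff|z|<1$ and so misplaces $z=0$. One small fix: $M_k(z)^2=I$ and $M_k(z)\neq I$ do not by themselves give $\operatorname{rank}(I-M_k(z))=1$ on a plane (consider $-I$), so cite $\det M_k(z)=-1$ or compute $I-M_k(z)$ directly.
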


\begin{proof}
The example shows that
$\Theta_{12}(z)=z^2$.
Hence
\begin{align*}
    0<\Theta_{12}(z)<1 &
\qquad\Longleftrightarrow\qquad
|z|<1, \\
\Theta_{12}(z)=1
& \qquad\Longleftrightarrow\qquad
|z|=1, \\
\Theta_{12}(z)>1 &
\qquad\Longleftrightarrow\qquad
|z|>1.
\end{align*}
The conclusion follows immediately from the Rank--Two Classification
Theorem.
\end{proof}

\begin{remark}[Failure of Finite-Monodromy Rigidity]
The walls $H_1$ and $H_2$ arise from distinct finite morphisms
$Q_1$ and $Q_2$. Therefore, their transport symmetries do not
preserve a common finite fiber and need not belong to a common finite
transport group.

The finite-monodromy rigidity theorem therefore does not apply.
The continuously varying invariant
$\Theta_{12}(z)=z^2$
is possible precisely because the transport geometry is assembled from
a transport atlas rather than a single finite morphism.
\end{remark}

\section{Beyond Reflection Groups}
\label{sec:Beyond-Reflection-Groups}

The preceding sections identified finite Weyl groups and finite
complex reflection groups as distinguished examples of coincidence
geometry. We also established that transport atlases permit geometric
phenomena, such as continuously varying rank-two transport invariants,
that cannot occur in a single finite-morphism transport system.

A natural direction therefore is to explore which coincidence geometries are equivalent to finite reflection-group
quotients, and which are genuinely new.
The purpose of this section is to develop intrinsic criteria that
distinguish coincidence geometries from classical reflection geometry.
We introduce coincidence isomorphisms and show that certain geometric
features are preserved under all such equivalences. These features
therefore provide obstructions to linearization.
As a consequence, many coincidence geometries, including, transport
geometries arising from nonconstant projective root bundles, curved root walls,
or variable Cartan invariants, cannot be realized by finite complex
reflection groups. This demonstrates that the coincidence-root
framework is a genuine extension of classical reflection theory.

\subsection{Coincidence Isomorphisms}
\label{subsec:Coincidence-Isomorphisms}

\begin{definition}[Coincidence Isomorphism]
\label{def:Coincidence-Isomorphisms}
Let
\[
Q_1:X_1\to Y_1,
\qquad
Q_2:X_2\to Y_2
\]
be finite morphisms.
A \emph{coincidence isomorphism} between $Q_1$ and $Q_2$ is a pair of
variety isomorphisms
\[
\phi:X_1\to X_2,
\qquad
\psi:Y_1\to Y_2,
\]
such that
\[
Q_2\circ\phi=\psi\circ Q_1.
\]
Two coincidence geometries are said to be \emph{isomorphic} if they
are related by a coincidence isomorphism.
\end{definition}

\begin{proposition}[Functoriality]
\label{prop:functoriality}

Let
\[
(\phi,\psi):
(Q_1:X_1\to Y_1)
\longrightarrow
(Q_2:X_2\to Y_2)
\]
be a coincidence isomorphism.
The following statements hold:
\begin{enumerate} [label = (\roman*)]
\item 
$\phi$ induces a bijection between the coincidence components of
$Q_1$ and those of $Q_2$;
\item
$\phi$ induces a bijection between the corresponding root walls;
\item
the differential $d\phi$ identifies the projective root bundles of
corresponding walls;
\item
the normalized transport invariants
$\Theta_{ij}=A_{ij}A_{ji}$
are preserved.
\end{enumerate}
\end{proposition}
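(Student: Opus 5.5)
The plan is to transport everything through the product isomorphism $\phi\times\phi:X_1\times X_1\to X_2\times X_2$ and check that each construction is natural with respect to it.

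\textbf{Step 1: components.} From $Q_2\circ\phi=\psi\circ Q_1$ and the injectivity of $\psi$, we have $Q_1(a)=Q_1(b)$ if and only if $Q_2(\phi a)=Q_2(\phi b)$. Hence $\phi\times\phi$ restricts to an isomorphism $\mathcal C_{Q_1}\to\mathcal C_{Q_2}$. It maps $\Delta_{X_1}$ onto $\Delta_{X_2}$. An isomorphism of varieties permutes irreducible components, so we obtain a bijection $\Phi_{Q_1}\to\Phi_{Q_2}$, $D\mapsto D':=(\phi\times\phi)(D)$. This bijection preserves admissibility, because generic smoothness and generic étaleness of the projections are invariant under isomorphism and $p_k\circ(\phi\times\phi)=\phi\circ p_k$. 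This proves (i).

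\textbf{Step 2: root walls.} Since $\phi\times\phi$ is a bijection carrying $\Delta_{X_1}$ onto $\Delta_{X_2}$, it sends $H_D=D\cap\Delta_{X_1}$ onto $D'\cap\Delta_{X_2}=H_{D'}$. Under the identification $\Delta\cong X$ of \Cref{remark:root_wall}, this says $\phi(H_D)=H_{D'}$, which proves (ii).

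\textbf{Step 3: transport symmetries and operators.} Uniqueness in \Cref{thm:Local-Transport-Theorem} gives $\tau_{D'}=\phi\circ\tau_D\circ\phi^{-1}$ near $\phi(x)$. Indeed, the pair $(\phi u,\phi\tau_D u)$ lies in $D'$, so the map $\phi\circ\tau_D\circ\phi^{-1}$ has its graph in $D'$. Differentiating at a point $x\in H_D$, which is fixed, gives
\[
R_{D'}(\phi x)=d\phi_x\,R_D(x)\,d\phi_x^{-1}.
\]
Consequences:
\begin{itemize}
\item $\ker(I-R)$ and $\operatorname{Im}(I-R)$ are carried over by $d\phi_x$.
\item The rank condition, and hence the set of regular transport root points, is preserved.
\item The root lines satisfy $L_{D'}(\phi x)=(d\phi_x^{-1})^*L_D(x)$.
\end{itemize}
So the projective root bundle $x\mapsto[L_D(x)]$ is identified via $d\phi$, proving (iii).

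\textbf{Step 4: invariants.} Choose normalized data $(\alpha_i,\nu_i)$ at $x$ and set
\[
\nu_i'=d\phi_x\nu_i,\qquad \alpha_i'=\alpha_i\circ d\phi_x^{-1}.
\]
These lie in $\operatorname{Im}(I-R_{D_i'})$ and $L_{D_i'}$ respectively, and satisfy $\alpha_i'(\nu_i')=1$. Therefore $A'_{ij}=\alpha_j(d\phi^{-1}d\phi\,\nu_i)=A_{ij}$. Any other normalized choice differs only by the rescalings of \Cref{prop:Transformation-Law}, under which $\Theta_{ij}$ is invariant. Hence $\Theta'_{ij}(\phi x)=\Theta_{ij}(x)$, proving (iv).

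\textbf{Main obstacle.} The only delicate point is Step 3. There one must make sure that the local neighborhoods on which $\tau_D$ is defined correspond under $\phi$, and that the uniqueness clause of \Cref{thm:Local-Transport-Theorem} genuinely applies. This requires shrinking the neighborhood so that the graph of $\phi\tau_D\phi^{-1}$ lies in the neighborhood $W'$ of $D'$ at $(\phi x,\phi x)$. The remaining steps are formal.
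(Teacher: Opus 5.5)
Your proof is correct, and Steps 1--2 match the paper's argument: restrict $\phi\times\phi$ to an isomorphism of fiber products and use that it carries $\Delta_{X_1}$ onto $\Delta_{X_2}$. For (iii) and (iv) you take a different, more complete route.

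\emph{The paper's route.} It obtains (iii) from $\phi(H_1)=H_2$ alone, noting that $d\phi$ induces $\mathbf P(N^*H_1)\cong\mathbf P(N^*H_2)$. This implicitly uses the identification $L_D(x)=N_x^*H_D$ from \Cref{prop:root-wall-fixed-eigenspace}. It then deduces (iv) by asserting that $\Theta_{ij}$ depends only on the projective root lines. That assertion is not sufficient as written. The coefficient $A_{ij}=\alpha_j(\nu_i)$ also depends on the coroot lines $C_i=\operatorname{Im}(I-R_i)$, and these are not determined by the walls. In \Cref{ex:transport-atlas-z2} the walls are $\{x=0\}$ and $\{y=0\}$ with constant conormals $dx,dy$. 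Yet $\Theta_{12}=z^2$ varies, because $\nu_1=\partial_x+z\partial_y$ varies.

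\emph{Your route.} Your Step 3 obtains $\tau_{D'}=\phi\circ\tau_D\circ\phi^{-1}$ from the uniqueness clause of \Cref{thm:Local-Transport-Theorem}, and hence $R_{D'}(\phi x)=d\phi_x\,R_D(x)\,d\phi_x^{-1}$. This is exactly what carries $\ker(I-R)$ and $\operatorname{Im}(I-R)$ over simultaneously. It makes $(\alpha_i\circ d\phi_x^{-1},\,d\phi_x\nu_i)$ legitimate normalized root data on the target, and $A'_{ij}=A_{ij}$ then follows by direct computation.

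\emph{Comparison.} The paper's argument is shorter and, for (iii), uses only the geometry of the walls. Yours costs the local bookkeeping you flag: uniqueness must be applied to local sections through the base point $(\phi x,\phi x)$. In exchange, it also shows that regular transport root points correspond under $\phi$, and it proves (iv) without relying on the incomplete ``depends only on root lines'' claim.
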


\begin{proof}
Since
$Q_2\circ\phi=\psi\circ Q_1$,
the map
\[
\Phi:
X_1\times_{Y_1}X_1
\longrightarrow
X_2\times_{Y_2}X_2,
\qquad
\Phi(x_1,x_2)
=
(\phi(x_1),\phi(x_2)),
\]
restricts to an isomorphism
\[
\Phi:
X_1\times_{Y_1}X_1
\stackrel{\sim}{\longrightarrow}
X_2\times_{Y_2}X_2.
\]
An isomorphism preserves irreducible components and therefore induces
a bijection between coincidence components. This proves (i).

Since $\Phi$ preserves the diagonal correspondence,
$\Phi(D\cap\Delta_{X_1})
=
\Phi(D)\cap\Delta_{X_2}$,
the corresponding root walls are identified. This proves (ii).

If $H_2=\phi(H_1)$ are corresponding root walls, then
$d\phi:
T_xX_1
\longrightarrow
T_{\phi(x)}X_2$
induces an isomorphism
$\mathbf P(N^*H_1)
\cong
\mathbf P(N^*H_2).$
This proves (iii).

Finally, by the Transformation Law (\Cref{prop:Transformation-Law}) for transport root data,
the product
$\Theta_{ij}=A_{ij}A_{ji}$
depends only on the associated projective root lines.
Since the projective root

bundles are identified, the invariants $\Theta_{ij}$ are preserved.
This proves (iv).
\end{proof}

\subsection{Linearizable Coincidence Geometry}
\label{subsec:Linearizable-Coincidence-Geometry}

We regard two finite morphisms as defining the same coincidence
geometry whenever they are related by a coincidence isomorphism.

\begin{definition}[Linearizable Coincidence Geometry]
\label{def:Linearizable-Coincidence-Geometry}
A coincidence geometry is said to be \emph{linearizable} if it is
isomorphic to the coincidence geometry associated with a finite
complex reflection-group quotient
\[
\pi:V\longrightarrow V/G,
\]
where
$G\subseteq GL(V)$
is a finite complex reflection group.
\end{definition}

\begin{corollary}[Intrinsic Cartan Invariants]
\label{cor:intrinsic-cartan}

The normalized transport invariants
$\Theta_{ij}=A_{ij}A_{ji}$
depend only on the underlying coincidence geometry.
Moreover,
\[
\Theta_{ij}
=
\frac{B_{ij}}
{(1-\zeta_i)(1-\zeta_j)}
\]
whenever the transport eigenvalues $\zeta_i$ and $\zeta_j$ are
defined.
\end{corollary}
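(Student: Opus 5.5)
The plan is to derive both assertions from results already established, so the proof is short and mostly a matter of assembling them. The statement has two parts: invariance of $\Theta_{ij}$ under coincidence isomorphism, and the trace expression.

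\emph{Invariance.} Suppose $(\phi,\psi)$ is a coincidence isomorphism from $Q_1$ to $Q_2$. By \Cref{prop:functoriality}(i)--(ii), $\Phi=\phi\times\phi$ carries each component $D_i$ of $\mathcal C_{Q_1}$ to a component $\Phi(D_i)$ of $\mathcal C_{Q_2}$, and carries root walls to root walls. Uniqueness in \Cref{thm:Local-Transport-Theorem} then gives
\[
\tau_{\Phi(D_i)}=\phi\circ\tau_{D_i}\circ\phi^{-1}
\]
near corresponding points. Indeed, the right-hand side has its graph in $\Phi(D_i)$. Differentiating at a point $x\in H_{D_i}\cap H_{D_j}$, which is fixed by both symmetries, yields
\[
R_{\Phi(D_i)}(\phi(x))=d\phi_x\,R_i(x)\,d\phi_x^{-1}.
\]
So the transport operators are conjugated by the single linear isomorphism $d\phi_x$.

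Next I transport the normalized root data. Set $\nu_i'=d\phi_x(\nu_i)$ and $\alpha_i'=\alpha_i\circ d\phi_x^{-1}$. The conjugation formula shows these lie in $\operatorname{Im}(I-R'_i)$ and $\operatorname{Ann}\ker(I-R'_i)$, and that the normalization $\alpha_i'(\nu_i')=1$ holds. Then
\[
A'_{ij}=\alpha_j'(\nu_i')=\alpha_j(\nu_i)=A_{ij}.
\]
Any other admissible choice differs only by the rescalings of \Cref{prop:Transformation-Law}, so $\Theta_{ij}$ is preserved. This is the content of \Cref{prop:functoriality}(iv), but the argument above makes it explicit.

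\emph{Trace formula.} Whenever $\zeta_i,\zeta_j$ are defined, meaning the finite-order, regular hypotheses of \Cref{thm:Finite-Order-Transport-Trace-Formula} hold, \Cref{cor:Trace-Theoretic-Expression} gives the displayed identity directly. I will also note that $B_{ij}$ is conjugation invariant, by the argument of \Cref{prop:Intrinsic-TransportTrace-Form} applied with $P=d\phi_x$. The eigenvalues $\zeta_i$ are likewise conjugation invariant. Hence the right-hand side of the formula is itself an invariant of the coincidence geometry, consistent with the first part.

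\emph{Main obstacle.} The only subtle point is the conjugation identity for the $\tau$'s. It requires that $\Phi$ preserve the local regularity hypotheses: smoothness of $D$, étaleness of $p_1|_D$, and regular transport root points. These all hold because $\Phi$ and $\phi$ are isomorphisms compatible with the projections, since $p_k\circ\Phi=\phi\circ p_k$. I would verify this compatibility first, and then invoke uniqueness.
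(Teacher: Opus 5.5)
Your proposal is correct. Its trace-formula half is the same as the paper's, which simply cites \Cref{thm:Finite-Order-Transport-Trace-Formula}; \Cref{cor:Trace-Theoretic-Expression} is its immediate rewriting.

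For the first assertion the routes differ. The paper's proof is a one-line appeal to the Transformation Law (\Cref{prop:Transformation-Law}). That is, it reads ``depends only on the underlying coincidence geometry'' as independence of $\Theta_{ij}$ from the choice of normalized root data $(\alpha_i,\nu_i)$ for a fixed $Q$. Invariance under coincidence isomorphisms is left implicitly to \Cref{prop:functoriality}(iv).

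You instead prove isomorphism invariance directly. You obtain the conjugation identity $\tau_{\Phi(D_i)}=\phi\circ\tau_{D_i}\circ\phi^{-1}$ from uniqueness in the Local Transport Theorem, and use the Transformation Law only to absorb the remaining rescaling freedom. This buys something real. The paper's argument for \Cref{prop:functoriality}(iv) claims that $\Theta_{ij}$ depends only on the projective root lines $\mathbf P(N^*H_i)$, and that is not sufficient. $A_{ij}=\alpha_j(\nu_i)$ also depends on the coroot lines $\operatorname{Im}(I-R_i)$. In \Cref{ex:transport-atlas-z2}, for instance, the conormal lines spanned by $dx$ and $dy$ are constant, and all the variation of $\Theta_{12}=z^2$ comes from $\nu_1=\partial_x+z\partial_y$ and $\nu_2=z\partial_x+\partial_y$.

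Your identity $R'_i=d\phi_x\,R_i\,d\phi_x^{-1}$ transports both the root line and the coroot line at once. That is exactly what is needed, and it is what \Cref{thm:linearization-obstruction} actually relies on. The paper's route is shorter, but on its own it only establishes independence from choices for a fixed morphism.
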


\begin{proof}
The invariance of $\Theta_{ij}$ under changes of local root data was
established in the Transformation Law (\Cref{prop:Transformation-Law}). The second formula follows from
the Transport Trace Formula (\Cref{thm:Finite-Order-Transport-Trace-Formula}).
\end{proof}

\begin{theorem}[Intrinsic Linearization Obstruction]
\label{thm:linearization-obstruction}
Let $\mathcal G$ be a coincidence geometry.
Suppose that at least one of the following holds:
\begin{enumerate}
\item[(i)]
some root wall $H_D$ is not locally a hyperplane;
\item[(ii)]
the projective root line field associated with some root wall
is not locally constant;
\item[(iii)]
some intrinsic Cartan invariant
$\Theta_{ij}=A_{ij}A_{ji}$
is not locally constant.
\end{enumerate}
Then $\mathcal G$ is not linearizable. In particular, $\mathcal G$
cannot arise from a finite complex reflection-group quotient.
\end{theorem}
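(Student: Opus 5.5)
The proof will be by contraposition. Suppose $\mathcal G$, given by $Q:X\to Y$, is linearizable. Then there is a coincidence isomorphism $(\phi,\psi)$ from $Q$ to a quotient $\pi:V\to V/G$, with $G\subseteq GL(V)$ a finite complex reflection group. The strategy has two parts. First, verify that each of the properties (i)--(iii) fails for $\pi$. Second, transport these failures back to $Q$ using the Functoriality Proposition (\Cref{prop:functoriality}). Each of the three properties is then shown to be invariant under coincidence isomorphism, so a geometry exhibiting any of them cannot be isomorphic to a reflection quotient.

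\textbf{Step 1: the linear model.} By the Galois Reconstruction Theorem (\Cref{thm:galois-reconstruction}) and the Shephard--Todd Recovery Corollary (\Cref{cor:Shephard--Todd-Recovery}), the coincidence components of $\pi$ are the graphs $\Gamma_g$. The root walls are the fixed loci $\operatorname{Fix}(g)$, which are linear subspaces; in the regular (codimension-one) case they are reflecting hyperplanes. The transport symmetry is $\tau_{\Gamma_g}=g$, which is linear, so $R_g(x)=g$ for every $x\in\operatorname{Fix}(g)$. Consequently:
\begin{itemize}
\item the root line $L_g(x)=\operatorname{Ann}(\ker(I-g))$ is independent of $x$, so the projective root line field is constant;
\item $\nu_g$ can be chosen constant on $\operatorname{Im}(I-g)$, so for two reflections $s_i,s_j$ the quantities $A_{ij}$, $A_{ji}$, and hence $\Theta_{ij}$, are constant on $H_i\cap H_j$.
\end{itemize}
The same constancy can also be seen from \Cref{cor:intrinsic-cartan}: $B_{ij}=\operatorname{tr}((I-s_i)(I-s_j))$ does not depend on $x$.

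\textbf{Step 2: transport back to $Q$.} The plan is to apply \Cref{prop:functoriality} to $(\phi,\psi)^{-1}$.

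For (i): $\phi$ maps root walls of $Q$ bijectively onto root walls of $\pi$. Since $\phi$ is an isomorphism of varieties, a wall $H_D$ of $Q$ is locally the isomorphic image of a hyperplane.

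For (ii): $d\phi$ identifies the projective root bundles $\mathbf P(N^*H_D)$ with the constant bundles of Step 1.

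For (iii): part (iv) of \Cref{prop:functoriality} gives $\Theta_{ij}^{Q}=\Theta_{ij}^{\pi}\circ\phi$, which is locally constant because $\Theta_{ij}^{\pi}$ is.

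Each conclusion contradicts the corresponding hypothesis, which proves the theorem.

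\textbf{Main obstacle.} The difficulty is making (i) and (ii) genuinely intrinsic. "Hyperplane" and "constant line field" are not invariant under arbitrary isomorphisms $\phi$, since a nonlinear $\phi$ can curve a hyperplane. The first fix to try is to read (i) and (ii) relative to a linear structure: a root wall "is locally a hyperplane" means it is locally the image of a hyperplane under the chart transporting the geometry, and "the root line field is locally constant" means it is parallel with respect to the flat structure pulled back by $\phi$. With these readings the argument above goes through. Item (iii) needs no such care, because $\Theta_{ij}$ is a genuine scalar invariant by \Cref{prop:Transformation-Law} and \Cref{prop:functoriality}(iv). For that reason the proof will present (iii) as the fully intrinsic obstruction and state explicitly that (i) and (ii) are invariant in the sense just described.
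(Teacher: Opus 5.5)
\paragraph{Overall.} Your route is the paper's route. The published proof is your Steps 1 and 2 in three sentences: Functoriality carries walls, projective root bundles and $\Theta_{ij}$ across, and in a reflection quotient the walls are hyperplanes, the root bundles are constant, and $\Theta_{ij}$ is constant.

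\paragraph{Clause (iii).} For (iii) your argument is sound, but justify \Cref{prop:functoriality}(iv) directly rather than through the paper's stated reason. The paper says $\Theta_{ij}$ depends only on the projective root lines, and that is false pointwise. At $p=(0,0,z)$ in \Cref{ex:transport-atlas-z2} the root lines are $[dx],[dy]$ for every $z$, yet $\Theta_{12}=z^2$. The correct reason is naturality. $\Phi(D)$ is locally the graph of $\phi\circ\tau_D\circ\phi^{-1}$, so $d\phi_x$ conjugates $R_D(x)$ to $R_{\Phi(D)}(\phi(x))$. It therefore sends a normalized pair $(\alpha,\nu)$ to the normalized pair $(\alpha\circ d\phi_x^{-1},\,d\phi_x\nu)$, which preserves each $A_{ij}$ and hence $\Theta_{ij}$.

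\paragraph{Clauses (i) and (ii) are false as stated.} The obstacle you flag cannot be handled by rereading. It makes (i) and (ii) false as stated, and the paper's proof passes over it silently. A counterexample:
\begin{itemize}
\item Let $\pi(u,v)=(u^2,v)$ be the quotient by the reflection $(u,v)\mapsto(-u,v)$, let $\phi(x,y)=(x-y^2,y)$, and put $Q=\pi\circ\phi$.
\item Then $(\phi,\mathrm{id})$ is a coincidence isomorphism from $Q$ to $\pi$, so $Q$ is linearizable by definition.
\item The non-diagonal component of $Q$ is the graph of $\tau_D(x,y)=(2y^2-x,y)$.
\item Its root wall is the parabola $H_D=\{x=y^2\}$.
\item Its root line at $(y^2,y)$ is spanned by $dx-2y\,dy$, which varies with $y$.
\end{itemize}
So (i) and (ii) both hold for a linearizable geometry. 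These two conditions are not invariant under coincidence isomorphism, so they are not properties of a coincidence geometry in the paper's sense.

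Clause (i) fails even without $\phi$. By \Cref{thm:galois-reconstruction}, every $g\neq e$ contributes a component with wall $\operatorname{Fix}(g)$. For $S_3$ on its two-dimensional reflection representation, a $3$-cycle has wall $\{0\}$, which is not a hyperplane. Your Step 1 notes that walls are linear subspaces of arbitrary codimension, but Step 2 then treats them all as hyperplanes. At minimum, (i) must be restricted to codimension-one walls.

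\paragraph{Your rereading, and what to use instead.} You proposed reading ``hyperplane'' or ``constant'' relative to the flat structure pulled back by $\phi$. That makes the implication true only because the hypothesis now refers to the very isomorphism whose existence is in question, so it gives no checkable obstruction.

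What does survive is the isomorphism-invariant content of the linear model:
\begin{itemize}
\item every root wall is smooth;
\item every reflection-type wall is isomorphic to $\mathbb A^{n-1}$;
\item two distinct reflection-type walls meet transversely.
\end{itemize}
Hence a singular wall, or a tangency between two distinct walls (compare \Cref{thm:tangency_parabolic}), is a genuine intrinsic obstruction and can replace (i). Clause (ii) is just the conormal line field of the wall, $L_D=N^*H_D$, so it adds nothing to (i) and should be dropped. With that correction, your proof (which is the paper's proof) is complete for the corrected (i) and for (iii).
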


\begin{proof}
By \Cref{prop:functoriality}, coincidence isomorphisms
preserve coincidence components, root walls, projective root bundles,
and the intrinsic transport invariants $\Theta_{ij}$.
For a finite complex reflection-group quotient (a) the root walls are precisely the reflecting hyperplanes; (b)
every projective root bundle is constant; and
(c) every rank-two transport invariant $\Theta_{ij}$ is constant.
Therefore any coincidence geometry satisfying one of conditions
\emph{(i)}--\emph{(iii)} cannot be isomorphic to a finite complex
reflection-group quotient.
\end{proof}

\begin{remark}[Transport Atlases and Nonlinearizable Geometry]
The transport atlas of
\Cref{ex:transport-atlas-z2}
provides a concrete instance of the obstruction theorem.
The rank-two transport invariant
$\Theta_{12}(z)=z^2$
is not locally constant. As a result, the resulting coincidence
geometry is not linearizable.
Thus transport atlases naturally generate coincidence geometries that
lie beyond the classical theory of finite reflection groups.
\end{remark}

\section{Geometric Rigidity and Degenerations}
\label{sec:Geometric-Rigidity-and-Degenerations}

The normalized transport invariant
$\Theta_{ij}=A_{ij}A_{ji}$
governs both the dynamics and the geometry of rank-two transport
systems. In this section we discuss two complementary phenomena.
First, projective geometry imposes strong rigidity constraints on
transport invariants. Second, special values of the transport
invariant detect geometric degeneracies in the arrangement of
transport walls.

\subsection{Projective Rigidity of Transport Invariants}
\label{subsec:Projective-Rigidity-of-Transport-Invariants}

The transport atlas examples of the preceding sections show that
rank-two transport invariants may vary continuously on affine
varieties. This naturally raises the question of whether analogous
behavior can occur on projective varieties.
The answer is negative. Whenever the transport invariant is defined as
a regular algebraic function on a connected projective transport
locus, it is necessarily constant. Thus projective geometry imposes a
strong rigidity independent of any finite-monodromy assumptions.

\begin{theorem}[Projective Rigidity]
\label{thm:projective-rigidity}

Let $X$ be a variety equipped with a transport geometry.
Let
$Z$
be a connected projective subvariety on which the transport invariant
$\Theta_{ij}$
is defined and regular.
Then
$\Theta_{ij}$
is constant on $Z$.
\end{theorem}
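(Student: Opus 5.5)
The strategy is to reduce the statement to the classical fact that a connected projective variety over an algebraically closed field carries only constant regular functions. Concretely, I read the hypothesis ``$\Theta_{ij}$ is defined and regular on $Z$'' as saying that $\Theta_{ij}=A_{ij}A_{ji}$, built pointwise on the regular transport intersection locus, glues to a global section $\Theta_{ij}\in\mathcal O_Z(Z)=\Gamma(Z,\mathcal O_Z)$.

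The first step is to confirm that $\Theta_{ij}$ is an honest global function on $Z$ and not merely a local expression depending on choices. By the Transformation Law (\Cref{prop:Transformation-Law}), $\Theta_{ij}$ is unchanged under every rescaling $\nu_i\mapsto\lambda_i\nu_i$, $\alpha_i\mapsto\lambda_i^{-1}\alpha_i$. So on overlaps of local trivializations of the root lines $L_i$ and coroot lines $C_i$, the locally computed products agree. Where the trace expression applies, \Cref{cor:Trace-Theoretic-Expression} gives the alternative description $\Theta_{ij}=B_{ij}/((1-\zeta_i)(1-\zeta_j))$, which is manifestly intrinsic by \Cref{prop:Intrinsic-TransportTrace-Form}. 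In the atlas setting, \Cref{cor:mixed-theta} guarantees well-definedness even when $H_i$ and $H_j$ come from different charts. Hence the local regular functions glue to a single $\Theta_{ij}\in\mathcal O_Z(Z)$.

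The second step is the rigidity argument itself. Regard $\Theta_{ij}$ as a morphism $Z\to\mathbb A^1$ and compose it with the open immersion $\mathbb A^1\hookrightarrow\mathbb P^1$. Since $Z$ is projective, hence proper, the image $\Theta_{ij}(Z)$ is closed in $\mathbb P^1$. Since $Z$ is connected, the image is also connected. It misses $\infty$, so it is a proper closed connected subset of $\mathbb P^1$, and therefore a single point. Equivalently, one may quote the standard fact that $\Gamma(Z,\mathcal O_Z)$ is a finite-dimensional reduced $k$-algebra with connected spectrum, hence a finite field extension of $k$, hence equal to $k$ because $k$ is algebraically closed. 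Reducedness holds because $Z$ is a subvariety. Either route shows $\Theta_{ij}$ is constant on $Z$.

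The only real obstacle is the first step. We must make sure regularity is global on $Z$, meaning $Z$ lies entirely in the regular transport intersection locus. Otherwise $\Theta_{ij}$ would only be a rational function with possible poles, as with $z^2$ on $\mathbb P^1$ in \Cref{ex:transport-atlas-z2}. The statement hands us this as a hypothesis, so I would emphasize in the write-up that regularity on all of $Z$ is essential, and that gluing is justified by scaling invariance rather than by any choice of normalized root data.
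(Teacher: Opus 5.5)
Your proposal is correct and follows the paper's proof, which simply takes regularity of $\Theta_{ij}$ on $Z$ as given and invokes the fact that every global regular function on a connected projective variety is constant. Your gluing step via the Transformation Law is already covered by that hypothesis, and your properness argument (closed, connected image in $\mathbb P^1$ missing $\infty$) proves the standard fact the paper quotes. Both are valid elaborations rather than a different route.
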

\begin{proof}
By assumption,
$\Theta_{ij}:Z\to k$
is a regular function.
Since $Z$ is a connected projective variety, every global regular
function on $Z$ is constant.
Therefore
$\Theta_{ij}$
is constant on $Z$.
\end{proof}

\begin{corollary}
\label{cor:projective-transport-loci}
Connected projective subvarieties on which
the transport invariant is regular
cannot support nonconstant intrinsic transport invariants.
\end{corollary}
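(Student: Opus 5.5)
This follows directly from the Projective Rigidity theorem (\Cref{thm:projective-rigidity}), read as a contrapositive. Suppose, for contradiction, that $Z$ is a connected projective subvariety on which the transport invariant $\Theta_{ij}$ is regular. Suppose also that $\Theta_{ij}$ is nonconstant on $Z$. The plan is to check that $Z$ meets the hypotheses of \Cref{thm:projective-rigidity} exactly, and then conclude that $\Theta_{ij}|_Z$ is constant, which is the desired contradiction.

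In order, the steps are as follows.

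\begin{enumerate}[label=(\roman*)]
\item Fix the setting. $\Theta_{ij}=A_{ij}A_{ji}$ is defined on the locus of regular transport intersection points. This uses \Cref{cor:mixed-theta} in the atlas case, or \Cref{prop:Transformation-Law} in the single-morphism case.
\item Make precise what ``intrinsic'' means. By \Cref{prop:Transformation-Law}, $\Theta_{ij}$ is independent of the choice of normalized root data $(\alpha_i,\nu_i)$. So the local expressions computed in different charts, or with different normalizations, agree on overlaps. They therefore glue to a single well-defined function on $Z$.
\item Use the standing hypothesis that this glued function is regular. It is then an element of $\mathcal O(Z)$.
\item Apply the classical fact that a connected projective variety over an algebraically closed field has $\mathcal O(Z)=k$. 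This is exactly how \Cref{thm:projective-rigidity} is proved, so $\Theta_{ij}$ is constant on $Z$.
\end{enumerate}

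The statement also forbids \emph{nonconstant intrinsic transport invariants} in general, not only $\Theta_{ij}$. I would cover the other intrinsic invariants the same way. These include the trace forms $B_{ij}$, which are coordinate independent by \Cref{prop:Intrinsic-TransportTrace-Form}. They also include functions of $\Theta_{ij}$ such as $\det(\mathcal C_{ij})=4(1-\Theta_{ij})$. Each of these is a regular function on the locus where $\Theta_{ij}$ is regular, so the same argument applies. By \Cref{cor:intrinsic-cartan}, when the eigenvalues $\zeta_i,\zeta_j$ are defined, $B_{ij}$ is a constant multiple of $\Theta_{ij}$. Constancy of one therefore gives constancy of the other.

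The main obstacle is step (ii): making sure the chart-wise local definitions patch to a global regular function on $Z$. Within a single chart the data $\alpha_i,\nu_i$ are only defined locally, and the individual $A_{ij}$ are not intrinsic. I would handle this by relying on the rescaling invariance in \Cref{prop:Transformation-Law}. Since $\Theta_{ij}$ depends only on the root lines $L_i(x)$ and the coroot lines $C_i(x)$, the local products coincide on overlaps, and regularity is local. Once gluing is settled, the rest is the one-line argument above.
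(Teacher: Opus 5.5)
Your core argument is exactly the paper's: the corollary is an immediate restatement of \Cref{thm:projective-rigidity} (a regular function on a connected projective variety lies in $\mathcal O(Z)=k$), so your steps (iii)--(iv) already constitute the whole proof. Your ``main obstacle'' (ii) is not needed, since the hypothesis already gives $\Theta_{ij}$ as a single regular function on $Z$; moreover, your justification for it fails in the atlas setting, because no compatibility is imposed on overlaps, so two charts representing the same wall can induce different transport symmetries and hence different coroot lines $C_i(x)$, which the rescaling invariance of \Cref{prop:Transformation-Law} does not control (and likewise your extension to $B_{ij}$ goes beyond the statement and would need regularity of $B_{ij}$, or constancy of $\zeta_i,\zeta_j$, along $Z$, neither of which follows from regularity of $\Theta_{ij}$).
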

\begin{proof}
The proof is immediate from the theorem.
\end{proof}

\begin{remark}
The theorem shows that nonconstant intrinsic transport invariants
cannot occur on connected projective transport loci.
Thus the variable intrinsic Cartan geometries arising from transport
atlases are necessarily associated with nonprojective incidence
geometry. In this sense projective geometry recovers a rigidity
phenomenon reminiscent of classical Cartan--Coxeter theory, even in
the absence of an underlying reflection group.
\end{remark}

\subsection{Transversality Breakdown and Branch Locus Singularities}
\label{subsec:Transversality-Breakdown}

The transport invariant
$\Theta_{ij} = A_{ij}A_{ji}$
plays two complementary roles in the coincidence-root framework. On
the one hand, it determines the local dynamical type of the rank-two
transport geometry. On the other hand, it detects geometric
degeneracies in the arrangement of transport walls.

In classical reflection theory, reflecting hyperplanes meet
transversely by construction. In nonlinear coincidence geometry,
however, transport walls may develop tangencies. Such tangencies
correspond to singular behavior in the local wall arrangement and are
reflected directly in the transport Cartan matrix.
The following theorem shows that tangency forces the transport system
into the parabolic regime.

\begin{theorem}[Tangency Implies Parabolicity]
\label{thm:tangency_parabolic}
Let $H_{D_i}$ and $H_{D_j}$ be smooth rank-one transport walls.
Assume that
$x \in H_{D_i} \cap H_{D_j}$
is a regular transport root point for both walls. If
$T_x H_{D_i} = T_x H_{D_j}$,
then
$\Theta_{ij}(x) = 1$.
Equivalently,
$\det(\mathcal{C}_{ij}(x)) = 0$,
where
\begin{equation}
    \mathcal{C}_{ij}(x) = 
\begin{pmatrix}
2 & -2A_{ij}(x) \\
-2A_{ji}(x) & 2
\end{pmatrix}.
\end{equation}
\end{theorem}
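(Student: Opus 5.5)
The plan is to work entirely at the level of the linear algebra at the single point $x$. We use only the definitions of transport roots and transport Cartan coefficients, and avoid the Rank--Two Reduction Theorem, which needs transversality and fails here. The key observation is that the two transport root lines coincide when the walls are tangent, so the two normalized root covectors are proportional. The normalization conditions then force the off-diagonal Cartan coefficients to be mutually inverse.

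\textbf{Step 1: the root lines coincide.} By \Cref{prop:root-wall-fixed-eigenspace}, at a regular transport root point we have $T_xH_{D_i}=\ker(I-R_i(x))$. Hence the transport root line is
\[
L_i(x)=\operatorname{Ann}\bigl(\ker(I-R_i(x))\bigr)=\operatorname{Ann}(T_xH_{D_i}),
\]
and it is one-dimensional because $\operatorname{rank}(I-R_i(x))=1$. The same holds for $j$. The hypothesis $T_xH_{D_i}=T_xH_{D_j}$ therefore gives $L_i(x)=L_j(x)$. So for the chosen nonzero transport roots there is a scalar $c\in k^\times$ with $\alpha_j=c\,\alpha_i$. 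The scalar is nonzero since both covectors are nonzero. The only subtle point in the whole argument is this identification of the root line with the conormal line of the wall, and it is exactly what \Cref{prop:root-wall-fixed-eigenspace} provides. For that reason I would state explicitly that its fixed-locus hypothesis is in force, as in the standing regularity convention of \Cref{remark:Generic-Regularity-Convention}.

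\textbf{Step 2: compute the coefficients.} Take normalized vectors $\nu_i\in\operatorname{Im}(I-R_i)$ and $\nu_j\in\operatorname{Im}(I-R_j)$ with $\alpha_i(\nu_i)=\alpha_j(\nu_j)=1$, as in \Cref{def:Transport-Cartan-Coefficients}. Then
\[
A_{ij}(x)=\alpha_j(\nu_i)=c\,\alpha_i(\nu_i)=c,
\qquad
A_{ji}(x)=\alpha_i(\nu_j)=c^{-1}\alpha_j(\nu_j)=c^{-1}.
\]
Thus $\Theta_{ij}(x)=A_{ij}A_{ji}=1$. This is consistent with the Transformation Law (\Cref{prop:Transformation-Law}): the individual coefficient $c$ depends on the normalization, but the product does not.

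\textbf{Step 3: the Cartan determinant.} Substituting into \Cref{eq:cartan-determinant} gives $\det(\mathcal C_{ij}(x))=4(1-\Theta_{ij}(x))=0$. This is the stated equivalent form.

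Finally, I would remark that the two conditions $\Theta_{ij}(x)=1$ and $\det(\mathcal C_{ij}(x))=0$ are equivalent to each other for any pair of normalized root data, again by \Cref{eq:cartan-determinant}. So "Equivalently" in the statement needs no further argument beyond that formula.
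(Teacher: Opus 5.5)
Your proposal is correct and follows essentially the same route as the paper. Tangency makes the one-dimensional conormal (root) lines coincide, so the root covectors are proportional, $\alpha_j=c\,\alpha_i$; the normalizations $\alpha_i(\nu_i)=\alpha_j(\nu_j)=1$ then force $A_{ij}=c$ and $A_{ji}=c^{-1}$, hence $\Theta_{ij}(x)=1$ and $\det(\mathcal C_{ij}(x))=4(1-\Theta_{ij}(x))=0$. Your explicit use of \Cref{prop:root-wall-fixed-eigenspace} to identify $L_i(x)$ with $\operatorname{Ann}(T_xH_{D_i})$ just makes precise a step the paper takes implicitly when it passes to conormal spaces.
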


\begin{proof}
Since the walls are tangent at $x$, we have $T_x H_{D_i} = T_x H_{D_j}$. 
Taking annihilators yields the equality of their conormal spaces:
\begin{equation}
    N_x^* H_{D_i} = N_x^* H_{D_j}.
\end{equation}
Since the walls are rank one and $x$ is a regular transport root
point, both conormal spaces are one-dimensional. Therefore, there
exists a nonzero scalar $\lambda$ such that
$\alpha_i(x) = \lambda \alpha_j(x)$.
Evaluating this 1-form on the transport root vector $\nu_i$ gives
\begin{equation} \label{eq:lambda-Aij}
    1 = \alpha_i(\nu_i) = \lambda \alpha_j(\nu_i) = \lambda A_{ij}.
\end{equation}
Similarly, evaluating on the transport root vector $\nu_j$ yields
\begin{equation}\label{eq:Aji}
    A_{ji} = \alpha_i(\nu_j) = \lambda \alpha_j(\nu_j) = \lambda (1) = \lambda.
\end{equation}
From \Cref{eq:lambda-Aij,eq:Aji}, we obtain
\begin{equation}
     A_{ij}A_{ji} = 1.
\end{equation}
Therefore,
$\Theta_{ij} = 1$.
Finally, substituting this value into the determinant yields
\begin{equation}
    \det(\mathcal{C}_{ij}) = 4(1 - \Theta_{ij}) = 0.
\end{equation}
Hence, the transport Cartan matrix is singular, and the transport
geometry lies in the parabolic regime.
\end{proof}

\begin{corollary}[Tangency Locus]
\label{cor:Tangency-Locus}

Let
$\Sigma_{ij} = \{ x \in H_{D_i} \cap H_{D_j} \mid T_x H_{D_i} = T_x H_{D_j} \}$.
Then
\begin{equation}
    \Sigma_{ij} \subseteq \{ x \mid \Theta_{ij}(x) = 1 \}.
\end{equation}
\end{corollary}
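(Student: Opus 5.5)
The corollary follows directly from the Tangency Implies Parabolicity theorem (\Cref{thm:tangency_parabolic}), applied pointwise. Fix an arbitrary point $x\in\Sigma_{ij}$. By definition of $\Sigma_{ij}$, we have $x\in H_{D_i}\cap H_{D_j}$ and $T_xH_{D_i}=T_xH_{D_j}$. We want to conclude $\Theta_{ij}(x)=1$, which places $x$ in the level set $\{\Theta_{ij}=1\}$ and gives the inclusion.

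The key steps are as follows. First, we note that $\Theta_{ij}(x)$ is only meaningful where transport root data exist. This requires $x$ to be a regular transport root point for both $D_i$ and $D_j$, with both walls smooth and rank one. This is exactly the standing Generic Regularity Convention (\Cref{remark:Generic-Regularity-Convention}), so we read $\Sigma_{ij}$ as a subset of this regular locus. This is also the locus on which the right-hand side $\{x\mid \Theta_{ij}(x)=1\}$ is defined. Second, at such a point all hypotheses of \Cref{thm:tangency_parabolic} hold. The theorem gives $\Theta_{ij}(x)=1$, and equivalently $\det(\mathcal C_{ij}(x))=0$. Third, since $x\in\Sigma_{ij}$ was arbitrary, the inclusion follows.

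It may help to recall the mechanism inside the theorem. Equal tangent hyperplanes have equal annihilators, so $\alpha_i=\lambda\alpha_j$ for some scalar $\lambda$. The normalizations $\alpha_i(\nu_i)=\alpha_j(\nu_j)=1$ then force $A_{ij}=\lambda^{-1}$ and $A_{ji}=\lambda$. Hence the product $A_{ij}A_{ji}=1$, and by the Transformation Law (\Cref{prop:Transformation-Law}) this value is independent of the choices of root data.

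The only delicate point is the one in the first step: we must make sure every point of $\Sigma_{ij}$ lies in the domain where $\Theta_{ij}$ is defined. Otherwise the inclusion is vacuous or ill-posed at non-regular points. I would handle this explicitly by stating that $\Sigma_{ij}$ is taken inside the regular transport locus, in line with the paper's conventions.
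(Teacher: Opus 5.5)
Your proposal is correct and matches the paper's proof, which simply notes that the inclusion is immediate from the Tangency Implies Parabolicity theorem applied at each point of $\Sigma_{ij}$. Your explicit restriction of $\Sigma_{ij}$ to the regular transport locus, where $\Theta_{ij}$ is defined, is a sensible clarification that the paper leaves implicit.
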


\begin{proof}
Immediate from \Cref{thm:tangency_parabolic}.
\end{proof}

\begin{remark}[Tangency as a Phase Boundary]
The theorem shows that wall tangencies are necessarily contained in
the parabolic locus
$\Theta_{ij}=1$. Therefore, the transport invariant detects 
the onset of geometric degeneracy in the wall arrangement. In rank two, when the transport invariant is real-valued,
the parabolic regime forms the boundary separating elliptic behavior 
($0 < \Theta_{ij} < 1$) from hyperbolic behavior ($\Theta_{ij} > 1$). 
Thus, the transport invariant simultaneously governs both the local
dynamics and the geometry of the transport-wall arrangement.
\end{remark}

\begin{remark}[Branch-Locus Singularities]
Tangencies between transport walls contribute to the singular geometry
of the wall arrangement and may induce singular strata in the branch
locus
$B
=
Q\!\left(\bigcup_k H_{D_k}\right)$.
\Cref{thm:tangency_parabolic} shows that such tangencies are
necessarily contained in the parabolic locus
$\Theta_{ij}=1$.
Hence the transport invariant provides a useful indicator of potential
singular behavior.
\end{remark}

\begin{remark}[Comparison with Reflection Arrangements]
For finite real and complex reflection groups, reflecting hyperplanes
meet transversely in codimension two. Therefore, the tangency
phenomenon described by \Cref{thm:tangency_parabolic} does not
occur in classical reflection geometry.
The existence of parabolic tangency loci is therefore a genuinely
nonlinear feature of coincidence-root geometry.
\end{remark}

\section{Connection-Induced Cartan Extensions}
\label{sec:Connection-Induced-Cartan-Extensions}
The intrinsic transport Cartan coefficients developed in the previous
sections are defined only at regular transport root points lying on
pairwise intersections of transport walls. Therefore, the intrinsic
Cartan matrix records the local interaction of transport walls but
does not naturally extend to a geometric object on the ambient
variety.

The purpose of this section is to introduce a connection-dependent
extension of the transport-root formalism. A connection allows root
data attached to one transport wall to be transported and compared
with root data originating on another wall. The resulting
construction produces a family of Cartan coefficients depending on
both the ambient point and the chosen wall points from which the root
data originate.

Unlike the intrinsic transport invariants developed earlier, the
objects introduced in this section depend on the choice of
connection. They should therefore be viewed as supplementary
structures associated with the coincidence geometry rather than
intrinsic invariants of the finite morphism.

\subsection{Transport of Root Data}
Let $X$ be a smooth variety equipped with a flat connection $\nabla$. Assume that all constructions are carried out on a simply connected
open subset where parallel transport is path-independent.
For each coincidence component $D_i$, let $H_i \subset X$
denote the associated transport wall and let $(\alpha_i,\nu_i)$
be the corresponding transport root datum.
Let $q_i \in H_i$ be a regular transport root point.
Whenever transport with respect to $\nabla$ is defined from $q_i$
to a point $x \in X$, we denote the resulting transport operator by
$P^\nabla_{q_i\to x}$.

\begin{definition}[Transported Root Data]
\label{def:transported-root-data}
The transported root covector associated with $q_i \in H_i$ is
defined by $\widetilde\alpha_i(x;q_i) = P^\nabla_{q_i\to x} \bigl(\alpha_i(q_i)\bigr)$.
Similarly, the transported root vector is $\widetilde\nu_i(x;q_i) = P^\nabla_{q_i\to x} \bigl(\nu_i(q_i)\bigr)$.
\end{definition}

Thus the transport root datum originally attached to a wall point
$q_i$ may be propagated throughout the ambient manifold using the
chosen connection.

\subsection{Two-Point Cartan Coefficients}

Let $D_i, D_j \in \Phi_Q$. Choose regular transport root points
$q_i \in H_i$ and $q_j \in H_j$.

\begin{definition}[Two-Point Cartan Coefficient]
\label{def:two-point-cartan}
The \emph{two-point Cartan coefficient} associated with the pair
$(D_i,D_j)$ is the function $\widetilde A_{ij}(x;q_i,q_j) = \widetilde\alpha_j(x;q_j) \bigl( \widetilde\nu_i(x;q_i) \bigr)$.
The matrix-valued function $\widetilde A(x;q_1,\ldots,q_r) = \bigl( \widetilde A_{ij}(x;q_i,q_j) \bigr)$
will be called the \emph{two-point Cartan coefficient field}.
\end{definition}

\begin{definition}[Two-Point Cartan Matrix]
\label{def:two-point-cartan-matrix}
Given the two-point Cartan coefficients 
$\widetilde A_{ij}(x;q_i,q_j)$ the associated \emph{two-point Cartan matrix} is
\[
\widetilde{\mathcal C}(x;q_1,\ldots,q_r)
=
\Bigl(
\widetilde{\mathcal C}_{ij}(x;q_i,q_j)
\Bigr),
\]
where
\[
\widetilde{\mathcal C}_{ij}(x;q_i,q_j)
=
\begin{cases}
2\,\widetilde A_{ii}(x;q_i,q_i),
& i=j,\\[1ex]
-2\,\widetilde A_{ij}(x;q_i,q_j),
& i\neq j.
\end{cases}
\]
\end{definition}

The two-point Cartan matrix records the interaction at the ambient
point $x$ between transport root data originating at potentially
different wall points.
Observe that the intrinsic transport Cartan coefficients arise only
when the wall points coincide with the evaluation point. Thus the
connection-induced theory naturally enlarges the domain on which
Cartan data may be compared.

\subsection{Recovery of the Intrinsic Theory}
The intrinsic transport Cartan coefficients appear as the diagonal
restriction of the two-point theory.

\begin{proposition}[Diagonal Compatibility]
\label{prop:diagonal-compatibility}
Let $p \in H_i \cap H_j$ be a regular transport root point.
Then $\widetilde A_{ij}(p;p,p) = A_{ij}(p)$.
\end{proposition}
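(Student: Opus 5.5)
The plan is to unwind \Cref{def:transported-root-data} and \Cref{def:two-point-cartan} at the special configuration $x=q_i=q_j=p$, and then compare with \Cref{def:Transport-Cartan-Coefficients}. The one structural input is that parallel transport along the constant path is the identity. On the simply connected open set where we work, $\nabla$ is flat, so transport is path-independent. Hence $P^\nabla_{p\to p}$ is well defined and equals transport along the trivial path. That is the identity on $T_pX$, and dually on $T_p^*X$, because the parallel transport ODE with zero-length interval returns its initial value.

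Granting this, the computation runs in three steps. First, $\widetilde\alpha_j(p;p)=P^\nabla_{p\to p}(\alpha_j(p))=\alpha_j(p)$. Second, likewise $\widetilde\nu_i(p;p)=\nu_i(p)$. Third, substituting into the definition of the two-point coefficient gives
\[
\widetilde A_{ij}(p;p,p)=\widetilde\alpha_j(p;p)\bigl(\widetilde\nu_i(p;p)\bigr)=\alpha_j(p)\bigl(\nu_i(p)\bigr)=A_{ij}(p).
\]
Here the hypothesis that $p\in H_i\cap H_j$ is a regular transport root point for both walls is exactly what ensures that both sides are defined. It ensures the right side is defined as in \Cref{def:Transport-Cartan-Coefficients}. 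It also ensures that $p$ is a legitimate choice of base point $q_i$ and $q_j$ on the left side.

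The one point needing care is consistency of normalizations. The left side uses the root datum $(\alpha_i(q_i),\nu_i(q_i))$ evaluated at $q_i=p$. I will stipulate, as the paper implicitly does, that this is the same normalized choice $\alpha_i(\nu_i)=1$ used to define $A_{ij}(p)$. If different normalized choices were allowed on the two sides, the equality would hold only up to the rescaling in \Cref{prop:Transformation-Law}, namely $A_{ij}\mapsto\lambda_i\lambda_j^{-1}A_{ij}$. In that case I would remark that the invariant statement $\widetilde\Theta_{ij}(p;p,p)=\Theta_{ij}(p)$ holds regardless.

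The main, though mild, obstacle is justifying $P^\nabla_{p\to p}=\mathrm{id}$ rigorously in the stated setting. I would argue it from path-independence: any loop at $p$ is homotopic in the simply connected region to the constant loop, and flatness makes holonomy trivial. I would also note that the dual action on covectors is the inverse-transpose of the identity, hence again the identity.
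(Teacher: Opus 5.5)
Your proposal is correct and matches the paper's proof. Both reduce to $P^\nabla_{p\to p}=\mathrm{Id}$, which gives $\widetilde\alpha_j(p;p)=\alpha_j(p)$ and $\widetilde\nu_i(p;p)=\nu_i(p)$, so the pairing is $A_{ij}(p)$. Your added justification of the trivial transport, and your note that both sides must use the same normalized root datum, are sound points the paper leaves implicit.
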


\begin{proof}
Since the transport operator from a point to itself is the identity,
$P^\nabla_{p\to p} = \mathrm{Id}$.
Hence $\widetilde\alpha_j(p;p) = \alpha_j(p)$ and $\widetilde\nu_i(p;p) = \nu_i(p)$.
Therefore $\widetilde A_{ij}(p;p,p) = \widetilde\alpha_j(p;p) \bigl( \widetilde\nu_i(p;p) \bigr) = \alpha_j(p)(\nu_i(p)) = A_{ij}(p)$.
\end{proof}

\subsection{One-Point Cartan Fields via Sections}
The two-point Cartan matrix depends on the ambient point $x$ and on
the chosen wall points $q_i \in H_i$. In many situations, it is
natural to select the wall points as functions of the ambient point.
Let $s_i: U \longrightarrow H_i$ be a smooth local section assigning to 
each point $x \in U \subseteq X$ a distinguished point $s_i(x) \in H_i$.

\begin{definition}[Section-Induced Cartan Coefficients and Cartan Matrix]
\label{def:section-induced-cartan}

Given sections
$s_1,\dots,s_r$,
the associated section-induced Cartan coefficients are
\[
\widetilde A^{\,s}_{ij}(x)
=
\widetilde A_{ij}\bigl(x;s_i(x),s_j(x)\bigr).
\]
The matrix-valued function
$\widetilde A^{\,s}(x)
=
\bigl(\widetilde A^{\,s}_{ij}(x)\bigr)$
is called the \emph{section-induced Cartan coefficient field}.
The associated section-induced Cartan matrix is
\[
\widetilde{\mathcal C}^{\,s}(x)
=
\bigl(
2\widetilde A^{\,s}_{ii}(x)\delta_{ij}
-
2(1-\delta_{ij})\widetilde A^{\,s}_{ij}(x)
\bigr),
\]
or equivalently,
\[
\widetilde{\mathcal C}^{\,s}_{ij}(x)
=
\begin{cases}
2\widetilde A^{\,s}_{ii}(x), & i=j,\\[1ex]
-2\widetilde A^{\,s}_{ij}(x), & i\neq j.
\end{cases}
\]
\end{definition}

Thus, the two-point theory produces an ordinary Cartan matrix-valued
field on the ambient manifold once a choice of wall sections has been
made.

\begin{definition}[Two-Point Transport Invariant]
Let
$\widetilde A_{ij}(x;q_i,q_j)$
be the two-point Cartan coefficients.
The associated two-point transport invariant is
\[
\widetilde\Theta_{ij}(x;q_i,q_j)
=
\widetilde A_{ij}(x;q_i,q_j)
\widetilde A_{ji}(x;q_j,q_i).
\]
\end{definition}

\begin{definition}[Section-Induced Transport Invariant]
Let
$s_i:X\to H_i$
be a choice of sections.
The associated section-induced transport invariant is
\[
\widetilde\Theta^{\,s}_{ij}(x)
=
\widetilde\Theta_{ij}
\bigl(x;s_i(x),s_j(x)\bigr).
\]
\end{definition}

\begin{remark}
The construction depends on the choice of sections. Different choices
may lead to different Cartan fields even when the underlying
coincidence geometry and connection are fixed.
The section-induced Cartan field should be viewed as a gauge-fixed
version of the more fundamental two-point theory.
\end{remark}

\begin{proposition}[Compatibility with the Intrinsic Theory]
\label{prop:recovery}
Let $p \in H_i \cap H_j$ be a regular transport root point.
Choose $q_i = q_j = p$. Then the connection-induced Cartan coefficients satisfy:
\[
\widetilde{A}_{ij}(p; p, p) = A_{ij}(p), \quad
\widetilde{A}_{ji}(p; p, p) = A_{ji}(p).
\]
Therefore,
\[
\widetilde{\Theta}_{ij}(p; p, p) = \Theta_{ij}(p).
\]
Thus, the connection-theoretic construction restricts to the intrinsic transport geometry along the diagonal where $q_i = q_j = p$.
\end{proposition}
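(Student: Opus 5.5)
The plan is to reduce everything to the Diagonal Compatibility Proposition (\Cref{prop:diagonal-compatibility}), applied twice, followed by a one-line multiplication.

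First, fix a regular transport root point $p \in H_i \cap H_j$ and set $q_i = q_j = p$. By Definition~\ref{def:transported-root-data}, the transported data are obtained by applying $P^\nabla_{p\to p}$. Parallel transport along the constant path is the identity, and on the simply connected flat domain where the construction lives it is path-independent. Hence $P^\nabla_{p\to p}=\mathrm{Id}$ on both $T_pX$ and $T_p^*X$. The transported data therefore reduce to $\widetilde\alpha_k(p;p)=\alpha_k(p)$ and $\widetilde\nu_k(p;p)=\nu_k(p)$ for $k\in\{i,j\}$.

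Second, substitute these into Definition~\ref{def:two-point-cartan}. This gives $\widetilde A_{ij}(p;p,p)=\alpha_j(p)(\nu_i(p))=A_{ij}(p)$, which is exactly \Cref{prop:diagonal-compatibility}. Interchanging the roles of $i$ and $j$ (the hypothesis is symmetric, since $p$ is regular for both walls) gives $\widetilde A_{ji}(p;p,p)=A_{ji}(p)$.

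Third, multiply. By the definition of the two-point transport invariant, $\widetilde\Theta_{ij}(p;p,p)=\widetilde A_{ij}(p;p,p)\,\widetilde A_{ji}(p;p,p)=A_{ij}(p)A_{ji}(p)=\Theta_{ij}(p)$.

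One bookkeeping point needs care. The normalized root data at $p$ must be the same data used to define $A_{ij}(p)$ in Definition~\ref{def:Transport-Cartan-Coefficients}, with $\alpha_i(\nu_i)=\alpha_j(\nu_j)=1$. If a different normalized choice were made, the individual equalities would only hold up to the rescaling in \Cref{prop:Transformation-Law}, but $\Theta_{ij}$ would be unaffected.

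The main (mild) obstacle is justifying $P^\nabla_{p\to p}=\mathrm{Id}$ on covectors as well as vectors. I would handle this by noting that the dual connection's transport is the inverse transpose of the tangent transport, so it is also the identity.
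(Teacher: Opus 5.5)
Your proof is correct and follows essentially the same argument as the paper. You use $P^\nabla_{p\to p}=\mathrm{Id}$ to identify the transported root data with the original data at $p$, read off $\widetilde A_{ij}(p;p,p)=A_{ij}(p)$ and $\widetilde A_{ji}(p;p,p)=A_{ji}(p)$ from the definitions, and multiply to get $\widetilde\Theta_{ij}(p;p,p)=\Theta_{ij}(p)$. Your extra remarks are sound refinements that the paper leaves implicit: that the same normalized root data must be used on both sides, and that transport on covectors is the inverse transpose of transport on vectors.
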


\begin{proof}
Since $q_i = q_j = p$, the parallel transport operators from the base points to $p$ are the identity:
\[
P^{\nabla}_{p \to p} = \operatorname{id}.
\]
Therefore, the transported transport root covectors and transport root vectors coincide with the original local data:
\[
\widetilde{\alpha}_i(p; p) = \alpha_i(p), \quad
\widetilde{\nu}_i(p; p) = \nu_i(p),
\]
and similarly for index $j$. Hence,
\[
\widetilde{A}_{ij}(p; p, p) = \widetilde{\alpha}_j(p; p) \left( \widetilde{\nu}_i(p; p) \right) = \alpha_j(p) \left( \nu_i(p) \right) = A_{ij}(p).
\]
Interchanging $i$ and $j$ gives
\[
\widetilde{A}_{ji}(p; p, p) = A_{ji}(p).
\]
Multiplying the two equalities yields
\[
\widetilde{\Theta}_{ij}(p; p, p) = \widetilde{A}_{ij}(p; p, p) \widetilde{A}_{ji}(p; p, p) = A_{ij}(p) A_{ji}(p) = \Theta_{ij}(p).
\]
Therefore, the connection-theoretic construction extends the intrinsic transport geometry and recovers it along the diagonal
$q_i=q_j=p$.
\end{proof}

\subsection{Algebraic Connections}
The preceding constructions were formulated using the language of
connections and transport. Although this terminology is familiar from
differential geometry, the same formalism admits a purely algebraic
interpretation.

\begin{remark}[Algebraic-Geometric Formulation]
\label{rem:algebraic-connection}
Let $X$ be a smooth algebraic variety equipped with an algebraic
connection $\nabla: E \longrightarrow E \otimes \Omega_X^1$
on a vector bundle $E$.
Transported root covectors and root vectors
may be realized
locally as horizontal extensions with respect to $\nabla$. The
resulting two-point Cartan coefficients are then computed exactly as
in the differential-geometric setting.

Therefore, the theory of connection-induced Cartan extensions is
not intrinsically analytic. It may be formulated equally well in the
category of smooth algebraic varieties equipped with algebraic
connections.
\end{remark}

The preceding formalism is quite general.
The following family of finite morphisms provides an explicit class
for which the resulting two-point Cartan kernels and section-induced
Cartan fields can be computed completely.

\subsection{A Family of Two-Point Cartan Kernels}
\label{subsec:two-point-cartan-family}

We now present a family of finite morphisms which naturally produces
nonlinear Cartan geometry from a single coincidence correspondence.
The construction illustrates how variable Cartan coefficients arise
from connection-induced transport of root data rather than from the
use of multiple finite covers.\\

\noindent Let $Q: \mathbb A^2 \longrightarrow \mathbb A^2$ be given by
\begin{equation}
    Q(x,y) = \bigl( x^2F(y), \, y^2G(x) \bigr),
\end{equation}
where $F(0) \neq 0$ and $G(0) \neq 0$, and $\deg{F} \geq 1$ and $\deg{G} \geq 1$. 
The morphism is finite since $x$ and $y$ are integral over
$k[x^2F(y),\,y^2G(x)]$.
Equip $X = \mathbb A^2$ with its standard flat affine connection.

\subsubsection*{Coincidence Components and Transport Walls}

The coincidence correspondence is
\begin{equation}
    \mathcal C_Q = \Bigl\{ x^2F(y) = \widetilde x^{\,2}F(\widetilde y), \qquad y^2G(x) = \widetilde y^{\,2}G(\widetilde x) \Bigr\}.
\end{equation}
Consider a point
$p=(0,y_0,0,y_0)\in\Delta$
with \(F(y_0)\neq0\).
Since
$F(y_0)\neq0$,
the germs \(F(y)\) and \(F(\widetilde y)\) have nonzero value at
\(p\). Hence they are invertible in the local ring
\(\mathcal O_{X\times X,p}\), and therefore also in its completion.
Hence,
$u:=\frac{F(y)}{F(\widetilde y)}$
is a unit satisfying
\begin{equation*}
    u(p)
=
\frac{F(y_0)}{F(y_0)}
=
1.
\end{equation*}
Assuming \(\operatorname{char}(k)\neq2\), the polynomial
\(T^2-u\) has two simple roots at \(T=\pm1\) modulo the maximal
ideal of the completed local ring at \(p\). By Hensel's lemma,
these roots lift uniquely to two local square roots of \(u\).
Therefore, the first coincidence equation $x^2F(y) = \widetilde x^{\,2}F(\widetilde y)$ determines two local branches:
\begin{equation*}
    \widetilde x = x u^{1/2}, \qquad \widetilde x = -x u^{1/2}.
\end{equation*}
The first branch is the diagonal component. The second determines a nontrivial coincidence component $D_1$.
Along the diagonal we have \(u=1\). Hence the two branches coincide
precisely when \(x=0\). It follows that the fixed-point locus of the
nontrivial branch is
\begin{equation*}
    H_1=\{x=0\}.
\end{equation*}
By symmetry, the second coincidence equation determines a second
nontrivial coincidence component \(D_2\). Its fixed-point locus is
\begin{equation*}
    H_2=\{y=0\}.
\end{equation*}
The corresponding root covectors are
\begin{equation*}
    \alpha_1 = dx, \qquad \alpha_2 = dy.
\end{equation*}
These covectors define the conormal directions to the transport walls
\(H_1\) and \(H_2\), respectively. In the subsequent construction they
will be transported away from the walls using the ambient affine
connection.

\subsubsection*{Linearization Along the Walls}
Let
$q_1=(0,y_1)\in H_1$ and
$q_2=(x_2,0)\in H_2$
be regular transport root points on the two transport walls.
Let $\tau_1(x,y) = (\widetilde x,\widetilde y)$ denote the local transport symmetry associated with $D_1$, and let
$\tau_2(x,y)=(\widehat x,\widehat y)$
denote the local transport symmetry associated with the coincidence
component \(D_2\).

Since \(\tau_1\) corresponds to the nontrivial coincidence branch,
the transverse coordinate changes sign. Thus
\begin{equation}
    \frac{\partial\widetilde x}{\partial x}=-1.
\end{equation}
Moreover, \(H_1\) is fixed by \(\tau_1\), so the differential
\(d\tau_1\) preserves the tangent space
\(T H_1\). Since
$T H_1=\ker(dx)$,
the tangent vector \(\partial/\partial y\) is mapped into \(T H_1\).
Therefore,
\begin{equation}
    d\widetilde x\!\left(\frac{\partial}{\partial y}\right)=0 \implies \frac{\partial\widetilde x}{\partial y}=0.
\end{equation}
To determine the remaining Jacobian entry, differentiate the second coincidence equation $y^2G(x) = \widetilde y^{\,2}G(\widetilde x)$ with respect to $x$. Restricting to $H_1$, where $x = \widetilde x = 0$ and $\widetilde y = y$, gives
\begin{equation*}
    y^2G'(0) = 2y \frac{\partial\widetilde y}{\partial x} G(0) + y^2G'(0) \frac{\partial\widetilde x}{\partial x}.
\end{equation*}
Substituting $\frac{\partial\widetilde x}{\partial x} = -1$ yields
\begin{equation}
    2y \frac{\partial\widetilde y}{\partial x} G(0) = 2y^2G'(0).
\end{equation}
Therefore,
\begin{equation}
    \frac{\partial\widetilde y}{\partial x} = y\frac{G'(0)}{G(0)}.
\end{equation}
On \(H_1\) we have
$\tau_1(0,y)=(0,y)$,
so the restriction of \(\tau_1\) to the wall is the identity.
Therefore,
\begin{equation}
    \frac{\partial\widetilde y}{\partial y}=1.
\end{equation}
Hence,
\begin{equation}
    J(\tau_1)|_{H_1} = 
\begin{pmatrix}
-1 & 0 \\[2mm]
\displaystyle y\frac{G'(0)}{G(0)} & 1
\end{pmatrix}.
\end{equation}
By symmetry, the same argument applied to the second coincidence
equation
$y^2G(x)=\widetilde y^{\,2}G(\widetilde x)$
shows that the transport symmetry \(\tau_2\) fixes the wall
\(H_2=\{y=0\}\) and has Jacobian
\begin{equation}
    J(\tau_2)|_{H_2} = 
\begin{pmatrix}
1 & \displaystyle x\frac{F'(0)}{F(0)} \\[2mm]
0 & -1
\end{pmatrix}.
\end{equation}

\subsubsection*{Transport Normals}

The transport normals are obtained as normalized $(-1)$-eigenvectors of the transport Jacobians.
For $H_1$, write
\begin{equation}
    \nu_1 = a\frac{\partial}{\partial x} + b\frac{\partial}{\partial y}.
\end{equation}
Solving $J(\tau_1)\nu_1 = -\nu_1$ gives
\begin{equation*}
    \begin{pmatrix}
-1 & 0 \\[1mm]
\lambda y & 1
\end{pmatrix}
\binom{a}{b} = - \binom{a}{b}, \qquad \text{where} \quad \lambda = \frac{G'(0)}{G(0)}.
\end{equation*}
The first equation is automatically satisfied, while the second becomes $\lambda y a + b = -b$.
Hence, $2b = -\lambda y a$.
The normalization condition $\alpha_1(\nu_1) = dx(\nu_1) = 1$ gives $a=1$. Therefore,
\begin{equation} \label{eq:normalization-v-one}
    \nu_1 = \frac{\partial}{\partial x} - \frac12 \frac{G'(0)}{G(0)} y \frac{\partial}{\partial y}.
\end{equation}
Similarly, $J(\tau_2)\nu_2 = -\nu_2$ together with the normalization $\alpha_2(\nu_2) = 1$ gives
\begin{equation}\label{eq:normalization-v-two}
    \nu_2 = - \frac12 \frac{F'(0)}{F(0)} x \frac{\partial}{\partial x} + \frac{\partial}{\partial y}.
\end{equation}
Evaluating \Cref{eq:normalization-v-one,eq:normalization-v-two} at the chosen wall points yields
\begin{equation}
    \nu_1(q_1) = \frac{\partial}{\partial x} - \frac12 \frac{G'(0)}{G(0)} y_1 \frac{\partial}{\partial y}, \qquad
\nu_2(q_2) = - \frac12 \frac{F'(0)}{F(0)} x_2 \frac{\partial}{\partial x} + \frac{\partial}{\partial y}.
\end{equation}

\subsubsection*{The Two-Point Cartan Kernel}

Let $x \in \mathbb A^2$ be an arbitrary ambient point.
Because $X = \mathbb A^2$ is equipped with its standard flat affine connection, the Christoffel symbols vanish identically in the global coordinates $(x,y)$. Therefore, parallel transport preserves the coordinate expressions of tensors. In particular,
$$
\widetilde\alpha_1(x;q_1) = dx, \qquad \widetilde\alpha_2(x;q_2) = dy,
$$
and
\begin{equation}
\widetilde\nu_1(x;q_1) = \frac{\partial}{\partial x} - \frac12 \frac{G'(0)}{G(0)} y_1 \frac{\partial}{\partial y},
\qquad
\widetilde\nu_2(x;q_2) = - \frac12 \frac{F'(0)}{F(0)} x_2 \frac{\partial}{\partial x} + \frac{\partial}{\partial y}.
\end{equation}
Therefore,
\begin{equation}
    \widetilde A_{12}(x;q_1,q_2) = \widetilde\alpha_2(x;q_2) \bigl( \widetilde\nu_1(x;q_1) \bigr) = -\frac12 \frac{G'(0)}{G(0)} y_1,
\end{equation}
and
\begin{equation}
    \widetilde A_{21}(x;q_1,q_2) = \widetilde\alpha_1(x;q_1) \bigl( \widetilde\nu_2(x;q_2) \bigr) = -\frac12 \frac{F'(0)}{F(0)} x_2.
\end{equation}
Hence the two-point  Cartan coefficient field is
\begin{equation}
    \widetilde A(x;q_1,q_2) = 
\begin{pmatrix}
1 & -\dfrac12\frac{G'(0)}{G(0)} y_1 \\[3mm]
-\dfrac12\frac{F'(0)}{F(0)} x_2 & 1
\end{pmatrix}.
\end{equation}
The associated transport invariant is
\begin{equation}
    \widetilde\Theta_{12}(x;q_1,q_2) = \widetilde A_{12}(x;q_1,q_2) \widetilde A_{21}(x;q_1,q_2),
\end{equation}
and therefore
\begin{equation}
    \widetilde\Theta_{12}(x;q_1,q_2) = \frac14 \frac{F'(0)G'(0)}{F(0)G(0)} x_2y_1.
\end{equation}

Observe that the ambient point $x$ does not appear in the final expression. This is a consequence of the flat affine connection: parallel transport preserves the coordinate expressions of the root data, so the resulting Cartan kernel depends only on the chosen wall points $q_1$ and $q_2$.

\subsubsection*{Section-Induced Cartan Fields}

To recover an ordinary Cartan field on the ambient manifold, choose the natural sections
\begin{equation}
    s_1(x,y) = (0,y), \qquad s_2(x,y) = (x,0).
\end{equation}
Substituting $q_1 = s_1(x,y)$ and $q_2 = s_2(x,y)$ into the two-point Cartan coefficient field yields
\begin{equation}
    \widetilde A^{s}(x,y) = 
\begin{pmatrix}
1 & -\dfrac12\frac{G'(0)}{G(0)} y \\[3mm]
-\dfrac12\frac{F'(0)}{F(0)} x & 1
\end{pmatrix}.
\end{equation}
Therefore,
\begin{equation}
    \widetilde\Theta^{s}_{12}(x,y) = \frac14 \frac{F'(0)G'(0)}{F(0)G(0)} xy.
\end{equation}
Following the normalization introduced in
\Cref{sec:Rank-Two-Transport-Geometry}, the associated Cartan matrix is
\begin{equation}
    \widetilde{\mathcal C}^{\,s}(x,y)
=
\begin{pmatrix}
2\widetilde A^{\,s}_{11}(x,y)
&
-2\widetilde A^{\,s}_{12}(x,y)
\\[2mm]
-2\widetilde A^{\,s}_{21}(x,y)
&
2\widetilde A^{\,s}_{22}(x,y)
\end{pmatrix} = \begin{pmatrix}
2 & \frac{G'(0)}{G(0)} y \\[3mm]
\frac{F'(0)}{F(0)} x & 2
\end{pmatrix}.
\end{equation}

Thus a single finite morphism produces a genuinely nonlinear Cartan field whose coefficients vary throughout the ambient space.

\begin{corollary}
If
$F'(0)G'(0)\neq0$,
then the family
$Q(x,y)
=
\bigl(x^2F(y),\,y^2G(x)\bigr)$
produces a nonconstant section-induced Cartan field and a nonconstant
transport invariant
$\widetilde\Theta^{s}_{12}(x,y)$.
\end{corollary}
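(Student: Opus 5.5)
The plan is to read the corollary off the explicit formulas just derived for the family $Q(x,y)=\bigl(x^2F(y),\,y^2G(x)\bigr)$, with sections $s_1(x,y)=(0,y)$ and $s_2(x,y)=(x,0)$. The computation gives
\[
\widetilde A^{s}_{12}(x,y)=-\tfrac12\tfrac{G'(0)}{G(0)}\,y,\qquad
\widetilde A^{s}_{21}(x,y)=-\tfrac12\tfrac{F'(0)}{F(0)}\,x,\qquad
\widetilde\Theta^{s}_{12}(x,y)=\tfrac14\tfrac{F'(0)G'(0)}{F(0)G(0)}\,xy .
\]
These rest on two facts established earlier. First, the Jacobians $J(\tau_1)|_{H_1}$ and $J(\tau_2)|_{H_2}$ were obtained by differentiating the coincidence equations along the walls. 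Second, the flat affine connection has vanishing Christoffel symbols, so parallel transport leaves the coordinate expressions of $dx$, $dy$, $\nu_1(q_1)$, $\nu_2(q_2)$ unchanged. I take both as given. The corollary then reduces to showing that these explicit functions are nonconstant on $\mathbb A^2$ once $F'(0)G'(0)\neq 0$.

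First I handle the Cartan field. Since $F(0)\neq 0$ and $G(0)\neq 0$, the constants $\lambda=G'(0)/G(0)$ and $\mu=F'(0)/F(0)$ are well defined. The hypothesis $F'(0)G'(0)\neq0$ gives $\lambda\neq0$ and $\mu\neq0$. So the off-diagonal entry $\widetilde{\mathcal C}^{\,s}_{12}(x,y)=\lambda y$ is a nonconstant linear function of $y$, and likewise $\widetilde{\mathcal C}^{\,s}_{21}=\mu x$ is nonconstant in $x$. Hence the matrix-valued function $\widetilde{\mathcal C}^{\,s}$ is nonconstant; evaluating at $(0,0)$ and at $(0,1)$ gives different matrices.

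Next I handle the invariant. The function $\widetilde\Theta^{s}_{12}=\tfrac14\lambda\mu\,xy$ is a monomial with nonzero coefficient $\tfrac14\lambda\mu$; here I use $\operatorname{char}(k)\neq2$, as assumed in the construction. It vanishes at $(0,0)$ and equals $\tfrac14\lambda\mu\neq0$ at $(1,1)$, so it is nonconstant.

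The one point needing care is that the sections and transported data must be legitimately defined at the evaluation points. The formula for $\nu_1$ requires $q_1=(0,y_1)$ to be a regular transport root point, and the Hensel argument requires $F(y_1)\neq0$; the analogous conditions hold for $q_2$. I would restrict to the Zariski-open dense set $U$ on which $F(y)\neq0$ and $G(x)\neq0$; it contains the origin because $F(0),G(0)\neq0$. On $U$ the linear functions $\lambda y$, $\mu x$ and the monomial $\tfrac14\lambda\mu xy$ remain nonconstant, since a nonzero polynomial cannot be constant on a dense open subset of $\mathbb A^2$. This completes the argument.
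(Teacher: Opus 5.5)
Your proposal is correct and follows the paper's own route: the paper gives no separate proof and simply reads the corollary off the explicit formulas $\widetilde A^{s}_{12}=-\tfrac12\tfrac{G'(0)}{G(0)}y$, $\widetilde A^{s}_{21}=-\tfrac12\tfrac{F'(0)}{F(0)}x$ and $\widetilde\Theta^{s}_{12}=\tfrac14\tfrac{F'(0)G'(0)}{F(0)G(0)}xy$, whose coefficients are nonzero exactly when $F'(0)G'(0)\neq0$. Restricting to the open set where $F(y)G(x)\neq0$ is a sensible precaution the paper leaves implicit, but there it is the density argument that does the work (a nonconstant polynomial cannot be constant on a nonempty Zariski-open set), not your test points $(0,1)$ or $(1,1)$, which may lie outside that set.
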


\subsubsection*{A Canonical Special Case}

Taking $F(t) = 1+t$ and $G(t) = 1+t$ gives $F(0) = G(0) = 1$ and $F'(0) = G'(0) = 1$.
Hence,
\begin{equation}
    \widetilde A^{s}(x,y) = 
\begin{pmatrix}
1 & -y/2 \\
-x/2 & 1
\end{pmatrix}, \qquad  \widetilde{\mathcal C}^{\,s}(x,y) = 
\begin{pmatrix}
2 & y \\
x & 2
\end{pmatrix}
\end{equation}
and
\begin{equation}
    \widetilde\Theta^{s}_{12}(x,y) = \frac{xy}{4}.
\end{equation}

This shows that variable Cartan geometry already arises from a single
finite morphism together with a connection. In particular, the
appearance of nonconstant Cartan coefficients does not require the use
of transport atlases or multiple local finite covers.

\section{Conclusion}
\label{sec:Conclusion}

In this work we introduced a coincidence-root framework associated
with finite morphisms of smooth varieties. Starting from a finite
morphism
$Q:X\to Y$,
we showed that the coincidence correspondence
$\mathcal C_Q=X\times_YX$
carries a rich geometric structure analogous to the root geometry of
classical reflection groups. The non-diagonal irreducible components
of the coincidence correspondence give rise to coincidence
components, root walls, transport symmetries, transport operators,
transport root data, transport trace forms, and normalized transport
invariants. These structures are reconstructed directly from the
geometry of the correspondence and do not require a priori linear group
action.

A central contribution of the paper is the development of a rank-two
transport geometry associated with pairs of coincidence components.
The resulting theory includes rank-two reduction, transport trace and
determinant formulas, finite-order spectral classifications,
elliptic--parabolic--hyperbolic transport dynamics, transport Cartan
matrices, transport Coxeter graphs, and
transport Dynkin graphs. In particular, the local rank-two geometry
is controlled by the normalized transport invariant
$\Theta_{ij}$,
which simultaneously governs the transport dynamics and the local
geometry of the transport-wall arrangement.

The theory reveals an unexpected rigidity phenomenon. For a single
finite morphism, intrinsic transport invariants satisfy strong
finite-monodromy and projective rigidity constraints. These results
show that many variable Cartan phenomena cannot occur within the geometry of a single finite
cover. To move beyond this rigidity, we introduced transport atlases
and compatible families of local finite covers, providing a mechanism
for variable intrinsic Cartan geometry. We further developed
connection-induced extensions of the transport-root formalism,
yielding nonlinear Cartan fields defined on neighborhoods of the
ambient space.

An important feature of the framework is that it simultaneously
recovers and extends classical reflection geometry. For quotient maps
associated with finite Weyl groups and finite complex reflection
groups, the coincidence correspondence reconstructs the classical
root arrangements, Cartan data, Coxeter structures, and the
underlying reflection groups themselves. At the same time, transport
atlases produce coincidence geometries that are not linearizable by
finite reflection groups. Thus Weyl geometry and complex reflection
geometry emerge as highly rigid special cases of a substantially
broader transport geometry associated with finite correspondences.

Conceptually, the results suggest a reversal of the traditional
viewpoint. Classical root systems are usually regarded as structures
generated by reflection groups. The coincidence-root framework
indicates that many of the essential ingredients of root geometry are
already encoded in finite correspondences. From this perspective,
reflection groups are not the starting point of the theory but rather
distinguished realizations of a more fundamental coincidence
geometry.

The framework developed here opens several directions for future
investigation. These include higher-rank transport geometries,
classification problems for transport Cartan and Coxeter structures,
global transport groupoids, relationships with monodromy and
orbifold geometry, singular coincidence components, discriminant and
moduli problems, and possible connections with nonlinear analogues of
Lie-theoretic constructions. Another natural direction is the
development of an axiomatic theory of transport geometry based
directly on correspondences, transport atlases, or groupoid-valued
structures, independent of any ambient finite morphism.

The present paper should therefore be viewed as the foundational step
in a broader program whose ultimate goal is a nonlinear theory of
roots, Cartan geometry, Coxeter structures, and reflection-type
symmetries arising from finite correspondences and their associated
transport geometries.


\begin{thebibliography}{10}
	
	\bibitem{Coxeter1934}
	Harold~SM Coxeter.
	\newblock Discrete groups generated by reflections.
	\newblock {\em Annals of Mathematics}, 35(3):588--621, 1934.
	
	\bibitem{varadarajan2013lie}
	Veeravalli~S Varadarajan.
	\newblock {\em Lie groups, {L}ie algebras, and their representations}.
	\newblock Springer Science \& Business Media, 2013.
	
	\bibitem{humphreys2012introduction}
	James~E Humphreys.
	\newblock {\em Introduction to {L}ie algebras and representation theory}.
	\newblock Springer Science \& Business Media, 2012.
	
	\bibitem{ShephardTodd1954}
	Geoffrey~C Shephard and John~A Todd.
	\newblock Finite unitary reflection groups.
	\newblock {\em Canadian Journal of Mathematics}, 6:274--304, 1954.
	
	\bibitem{Chevalley1955}
	Claude Chevalley.
	\newblock Invariants of finite groups generated by reflections.
	\newblock {\em American Journal of Mathematics}, 77(4):778--782, 1955.
	
	\bibitem{humphreys1992reflection}
	James~E Humphreys.
	\newblock {\em Reflection groups and {C}oxeter groups}.
	\newblock Number~29. Cambridge university press, 1992.
	
	\bibitem{Heckenberger2006}
	Istv{\'a}n Heckenberger.
	\newblock The {W}eyl groupoid of a {N}ichols algebra of diagonal type.
	\newblock {\em Inventiones mathematicae}, 164(1):175--188, 2006.
	
	\bibitem{cuntz2012finite}
	Michael Cuntz and Istv{\'a}n Heckenberger.
	\newblock Finite {W}eyl groupoids of rank three.
	\newblock {\em Transactions of the American Mathematical Society},
	364(3):1369--1393, 2012.
	
	\bibitem{Looijenga1981}
	Eduard Looijenga.
	\newblock Rational surfaces with an anti-canonical cycle.
	\newblock {\em Annals of Mathematics}, 114(2):267--322, 1981.
	
	\bibitem{mckay1980graphs}
	John McKay.
	\newblock Graphs, singularities, and finite groups.
	\newblock In {\em Proc. Symp. Pure Math}, volume~37, pages 183--186, 1980.
	
	\bibitem{mckay1981cartan}
	John McKay.
	\newblock Cartan matrices, finite groups of quaternions, and {K}leinian
	singularities.
	\newblock {\em Proceedings of the American Mathematical Society},
	81(1):153--154, 1981.
	
\end{thebibliography}

\end{document}